\newcommand{\dblq}{{/\!/}}
\newtheorem{theorem}{Theorem}
\newtheorem{proposition}[theorem]{Proposition}
\newtheorem{lemma}[theorem]{Lemma}
\theoremstyle{definition}
\newtheorem{definition}[theorem]{Definition}
\newtheorem{conjecture/question}[theorem]{Conjecture/Question}
\newtheorem{remark/definition}[theorem]{Remark/Definition}
\newtheorem{terminology/notation}[theorem]{Terminology/Notation}
\def\OO{\mathcal{O}}
\def\cB{\mathcal{B}}
\def\cW{\mathcal{W}}
\def\cS{\mathcal{S}}
\def\cM{\mathcal{M}}
\def\H{\mathcal{H}}
\def\Pic0{{\rm Pic}^0(X)}
\def\mm{\overline{\mathcal{M}}}
\def\ss{\overline{\mathcal{S}}}
\def\hh{\overline{\mathcal{H}}}
\def\oM{\overline{\mathcal{M}}}
\def\EE{\mathrm{E}}
\def\n{\mathrm{n}}
\def\LL{\mathrm{L}}
\def\VV{\mathrm{V}}
\def\HH{\mathrm{H}}
\def\g{\mathrm{g}}
\def\rarr{\rightarrow}
\def\Z{\mathbb{Z}}
\def\XC{\mathcal{O}_{\mathcal{X}}}
\def\PPP{\mathsf{P}}
\def\w={\stackrel{\sim}{=}}
\begin{document}
\title{The moduli space of twisted canonical divisors}

\author[G. Farkas]{Gavril Farkas}

\address{Humboldt-Universit\"at zu Berlin, Institut F\"ur Mathematik
\hfill \newline\texttt{}
 \indent Unter den Linden 6, 10099 Berlin, Germany} \email{{\tt farkas@math.hu-berlin.de}}
\thanks{}

\author[R. Pandharipande]{Rahul Pandharipande}
\address{ETH Z\"urich, Department of Mathematics, Raemistrasse 101 \hfill
 \newline \indent   8092 Z\"urich, Switzerland }
 \email{{\tt rahul@math.ethz.ch}}

\date{October 2015}

\pagestyle{plain}

\begin{abstract}
The moduli space of canonical divisors (with prescribed zeros and poles)
on nonsingular curves is not
compact since the curve may degenerate. We define a proper moduli space of twisted
canonical divisors in $\overline{\mathcal{M}}_{g,n}$
which includes the space of canonical divisors as
an open subset.
The theory leads to geometric/combinatorial constraints on
the closures of the moduli spaces of canonical divisors.

In case the differentials have at least one pole (the strictly
meromorphic case), the moduli spaces of twisted canonical divisors on genus $g$ curves
are of pure codimension $g$ in $\overline{\mathcal{M}}_{g,n}$.
In addition to the closure of
the canonical divisors on nonsingular curves, the moduli spaces have virtual components.
In the Appendix, a complete proposal relating the sum of the fundamental classes
of all components (with intrinsic multiplicities) to a formula of
Pixton is proposed. The result is a precise and explicit conjecture in the
tautological ring for the
weighted fundamental class of the moduli spaces of twisted canonical divisors.

As a consequence of the conjecture, the classes of the closures of
the moduli spaces of canonical divisors on nonsingular curves are determined
(both in the holomorphic and meromorphic cases).
\end{abstract}
\maketitle

\setcounter{tocdepth}{1}
\tableofcontents

\newpage
\setcounter{section}{-1}
\section{Introduction}

\subsection{Zeros and poles} \label{zp}
Let $\cM_g$ be the moduli space of nonsingular curves of genus $g\geq 2$. The
{\em Hodge bundle},
$$\mathbb{E} \rightarrow \cM_g\, ,$$
has fiber
over the moduli point $[C]\in \cM_g$ given by the space of
{\em holomorphic differentials} $H^0(C,\omega_C)$.
The projectivization
$$\H_g = \mathbb{P}(\mathbb{E}) \rightarrow \cM_g$$
is a moduli space of {\em canonical divisors} on nonsingular curves.
We may further stratify $\H_g$ by canonical divisors with
multiplicities of zeros
specified by a partition
$\mu$ of $2g-2$.
Neither $\H_g$ nor the strata with specified zero multiplicities
 are compact.
We
describe a natural compactification of the strata associated
to a partition $\mu$.

{\em Meromorphic differentials}
arise naturally in the analysis of the boundary of the
spaces of holomorphic differentials.
We will  consider meromorphic differentials on curves with
prescribed zero and pole multiplicities.
Let
$$\mu=(m_1, \ldots, m_n)\,, \ \ \ \ m_i \in \mathbb{Z}$$
satisfy $\sum_{i=1}^n m_i=2g-2$.
The vector $\mu$ prescribes the
zero multiplicities of a meromorphic differential
via the positive parts $m_i>0$
and the pole multiplicities via the negative parts $m_i<0$.
Parts with $m_i=0$ are also permitted (and will correspond
to points on the curve which are neither zeros nor poles
and otherwise unconstrained).

Let $g$ and $n$ be in the stable range $2g-2+n>0$. For a vector $\mu$
of length $n$,
we define the closed substack  $\H_g(\mu)\subset \cM_{g,n}$ by
$$\H_g(\mu)=\Bigl\{[C, p_1, \ldots, p_n]\in \cM_{g,n}\, \Big| \, \OO_C\Bigl(\sum_{i=1}^n m_i p_i\Bigr)=\omega_C \Bigr\}\, . $$
Consider first the case where
 all parts of $\mu$ are non-negative.
Since $\H_g(\mu)$ is the locus of points
$$[C, p_1, \ldots, p_n]\in \cM_{g,n}$$ for which the evaluation map
$$H^0(C, \omega_C)\rightarrow H^0\bigl(C, \omega_{C | m_1p_1+\cdots+m_n p_n}\bigr)$$ is not injective, every component of $\H_g(\mu)$ has dimension at least
 $2g-2+n$
in $\cM_{g,n}$ by degeneracy loci considerations \cite{Fulton}.
Polishchuk \cite{Pol} has shown
that $\H_g(\mu)$ is a \emph{nonsingular} substack of $\cM_{g,n}$ of
pure dimension $2g-2+n$. In fact, the arguments of
 \cite{Pol} can be extended to
the case where the vector $\mu$ has negative
parts.{\footnote{Occurances of $\omega_C$
in Polishchuk's argument should be
replaced by $\omega_C\left(\sum_{i=1}^{\widehat{n}} |m_i|p_i\right)$
where $m_1,...,m_{\widehat{n}}$ are the negative parts of $\mu$.
The reason for the change from codimension $g-1$ in the holomorphic case
to codimension $g$ in the strictly meromorphic case is that  $\omega_C$
is special and $\omega_C\left(\sum_{i=1}^{\widehat{n}} |m_i|p_i\right)$ is not.}}
In the strictly meromorphic case, $\H_g(\mu)$ is a nonsingular substack of dimension $2g-3+n$ in $\cM_{g,n}$.


In the holomorphic case,
the spaces $\H_g(\mu)$ have been intensely studied from the point of view of flat surfaces (which leads to
natural coordinates and a volume form), see  \cite{EMZ}.
However, an algebro-geometric study of the strata has been neglected:
basic questions concerning the birational geometry and the cohomology classes of the strata of differentials are open.

We define here  a proper moduli space of {\em twisted canonical
divisors}
$$\widetilde{\H}_g(\mu)\subset \mm_{g,n}$$ which contains $\H_g(\mu)$ as
an open set
$$\H_g(\mu) \subset \widetilde{\H}_g(\mu)\, .$$
The space $\widetilde{\H}_g(\mu)$ will typically be {\em larger} than the
closure
$$\overline{\H}_g(\mu) \subset \widetilde{\H}_g(\mu)\, .$$
We prove every irreducible component of $\widetilde{\H}_g(\mu)$ supported
entirely in the boundary of $\overline{\mathcal{M}}_{g,n}$ has
codimension $g$ in $\mm_{g,n}$.

In the {\em strictly meromorphic case} where there exists an $m_i<0$,  the moduli space
$\widetilde{\H}_g(\mu)$
is of pure codimension $g$ in $\overline{\mathcal{M}}_{g,n}$.  The closure
$${\hh}_g(\mu) \subset \widetilde{\H}_g(\mu)$$
is a union of irreducible components.
The components of $\widetilde{\H}_g(\mu)$ which do not lie in ${\hh}_g(\mu)$
are called {\em virtual}. The virtual components play a basic
 role in the study of canonical divisors.

\subsection{Pixton's formula}
A relation to a beautiful formula of Pixton
for an associated cycle class in $R^g(\overline{\mathcal{M}}_{g,n})$
 is explored in the Appendix: the
contributions of the virtual
components of $\widetilde{\H}_g(\mu)$
are required to match Pixton's formula.
The result is
a precise and explicit conjecture in the tautological ring of $\overline{\mathcal{M}}_{g,n}$ for the
 sum of the  fundamental classes (with intrinsic multiplicities)
of all irreducible components of $\widetilde{\H}_g(\mu)$
in the strictly meromorphic case.

In the holomorphic case where all $m_i\geq 0$, the connection to
Pixton's formula
is more subtle since $$\widetilde{\H}_g(\mu)\subset \overline{M}_{g,n}$$ is not of pure codimension $g$.
The more refined
 approach to the moduli of canonical divisors proposed by Janda
\cite{J} via relative Gromov-Witten theory and virtual fundamental classes
will likely be required to understand the holomorphic case
(and to prove the conjecture of the Appendix in the strictly meromorphic case).
However, the virtual components clearly also play a role in the
holomorphic case.

Remarkably, the fundamental classes of the varieties of closures
$$\overline{\H}_g(\mu) \subset \overline{\mathcal{M}}_{g,n}\, ,$$
in {\em both} the holomorphic and strictly meromorphic cases,
are determined in the Appendix as a consequence of the
conjectured link between Pixton's formula and
the weighted fundamental class of the moduli space of twisted canonical
divisors in the strictly meromorphic case.

\subsection{Twists}\label{twww}
Let $[C, p_1,\ldots, p_n]$ be a Deligne-Mumford stable $n$-pointed
curve.{\footnote{The definition of a twist given here is valid for
any connected nodal curve.}}
Let $$\mathsf{N}(C)\subset C$$
be the nodal locus.
A node $q \in \mathsf{N}(C)$ is {\em basic} if
$q$ lies in the intersection of two distinct
irreducible components of $C$. Let $$\mathsf{BN}(C)\subset \mathsf{N}(C)$$
be the set of basic nodes, and let
$$\widetilde{\mathsf{BN}}(C) \rightarrow \mathsf{BN}(C)$$
be the canonical double cover defined by
$$\widetilde{\mathsf{BN}}(C)= \big\{ \, (q,D)\, | \, q\in \mathsf{BN}(C)\, ,\
q\in D\, ,
  \ \text{and}  \ D\subset C \  \text{an irreducible component} \big\} \, .
$$
A \emph{twist} $I$ of the curve $C$ is a function
$$I : \widetilde{\mathsf{BN}}(C) \rightarrow \mathbb{Z}\, $$
satisfying the {\em balancing}, {\em vanishing}, {\em sign}, and
{\em transitivity} conditions defined as follows.

\vspace{5pt}
\noindent  {\bf{Balancing:}} \emph{If a basic node $q$ lies in
the intersection of distinct irreducible components $D,D'\subset  C$, then
$$I(q, D)+I(q,D')=0\, .$$ }

Let $\mathsf{Irr}(C)$ be the set of irreducible components of $C$.
If $$D,D'\in \mathsf{Irr}(C)$$ and there exists $q\in D\cap D'$ with
$I(q,D)=0$, we write $$D\approx D'\, .$$ By the balancing
condition, $\approx$ is symmetric.
Let $\sim$ be the minimal {\em equivalence relation} on $\mathsf{Irr}(C)$
generated by $\approx$.

\vspace{5pt}
\noindent {\bf{Vanishing:}} \emph{If $D,D'\in \mathsf{Irr}(C)$ are distinct
irreducible components
in the same $\sim$-equivalence class and $q\in D\cap D'$, then
$$I(q,D)=I(q,D')=0\, .$$ }
\vspace{5pt}

\noindent {\bf{Sign:}}\emph{
Let $q\in D \cap D'$ and
$\widehat{q} \in \widehat{D} \cap \widehat{D}'$ be
basic nodes of $C$. If
$ D\sim \widehat{D}$ and $ D'\sim \widehat{D}'$, then
$$I({q},{D})>0 \ \Longrightarrow\
I (\widehat{q}, \widehat{D})>0\, .$$}

\vspace{5pt}

Let $\mathsf{Irr}(C)^\sim$ be the set of $\sim$-equivalence
classes.
We define
a directed graph $\Gamma_I(C)$ with vertex set $\mathsf{Irr}(C)^\sim$
by the following construction.
A directed edge
$$v\rightarrow v'$$
is placed between equivalence classes $v,v'\in \mathsf{Irr}(C)^\sim$
if there exist
\begin{equation}\label{cggc}
D\in v\, ,\ \ D' \in v'\,\ \ \text{and}\ \  q\in D\cap D'
\end{equation}
satisfying $I(q,D)>0.$

The vanishing condition prohibits self-edges at the vertices of $\Gamma_I(C)$.
By the sign condition, vertices are {\em not} connected
by directed edges in both directions: there is at most
a single directed edge between vertices.
The fourth defining condition for a  twist $I$ is easily stated
in terms of the associated graph $\Gamma_I(C)$.

\vspace{5pt}

\noindent  {\bf{Transitivity:}}
\emph{The graph $\Gamma_I(C)$
has no directed loops.}

\vspace{10pt}

If $C$ is curve of compact type, distinct irreducible components
of $C$ intersect in at most one point. The vanishing and sign conditions
are therefore trivial. Since $\Gamma_I(C)$ is a tree, the
transitivity condition is always satisfied. In the compact type
case, only the balancing condition is required for the definition of
a twist.

\subsection{Twisted canonical divisors}
\label{tcd}
Let $[C, p_1,\ldots, p_n]\in \mm_{g,n}$, and let
$$I : \widetilde{\mathsf{BN}}(C) \rightarrow \mathbb{Z}\, $$
be a twist.
Let
$\mathsf{N}_I\subset \mathsf{BN}(C)$ be the set of basic nodes
at which $I$ is non-zero,
$$\mathsf{N}_I= \big\{ \, q \in \mathsf{BN}(C) \, \big| \,
I(q,D) \neq 0 \ \ \text{for} \ q\in D \, \big\}\, .$$
Associated to $I$ is the partial normalization
             $$\nu:C_I\rightarrow C$$
defined by normalizing exactly the nodes in $\mathsf{N}_I$.
The curve $C_I$ may be disconnected.

 For a node $q\in \mathsf{N}_I$ in the intersection
of distinct components $D'$ and $D''$ of $C$, we have
$\nu^{-1}(q)=\{q', q''\}$. Let
$$D_q'\subset\nu^{-1}(D') \ \ \text{and} \ \ D_q''\subset\nu^{-1}(D'')$$
denote
the irreducible components of $C_I$ such that $q'\in D_q'$ and $q''\in D_q''$.
By the definition of $\nu$ and the sign condition,
$$D_q'\cap D_q''=\emptyset\, \ \text{in} \ C_I\, .$$


Let $\mu=(m_1,\ldots,m_{n})$  be a vector satisfying
$\sum_{i=1}^n m_i=2g-2$.
To the stable curve $[C,p_1,\ldots,p_n]$, we associate the Cartier divisor
$\sum_{i=1}^n m_i p_i$ on $C$.

\vspace{5pt}
\begin{definition} \label{ffg} The divisor $\sum_{i=1}^n m_i p_i$
associated to $[C,p_1,\ldots,p_n]$ is
{\em twisted canonical} if there exists a twist
$I$ for which
$$\nu^*\OO_{C}\left(\sum_{i=1}^n m_ip_i\right)\, \stackrel{\sim}{=}\,
\nu^*(\omega_C)\otimes \OO_{C_I}
\left(\,\sum_{q\in \mathsf{N}_I} I(q,D'_q)\cdot q'+I(q, D''_q)\cdot q''\right)\,  $$
on the partial normalization $C_I$.
\end{definition}
\vspace{5pt}

We define the subset $\widetilde{\H}_g(\mu)\subset \mm_{g,n}$
parameterizing twisted canonical divisors by
$$\widetilde{\H}_g(\mu)=\left\{[C, p_1, \ldots, p_n]\in \mm_{g,n}\,
\Big| \,  \sum_{i=1}^n m_ip_i \ \ \text{is a twisted canonical divisor}\
\right\}\, .$$
By definition, we have
$$\widetilde{\H}_g(\mu)\cap \cM_{g,n} =\H_g(\mu)\, ,$$
 so $\H_g(\mu) \subset \widetilde{\H}_g(\mu)$ is an
open set.

If $\mu$ has a part equal to $0$, we write $\mu=(\mu',0)$.
Let
$$\tau: \overline{\mathcal{M}}_{g,n} \rightarrow \overline{\mathcal{M}}_{g,n-1}$$
be the map forgetting the $0$ part (when permitted by stability).
As a straightforward consequence of Definition \ref{ffg}, we obtain
$$\widetilde{\H}_g(\mu)=\tau^{-1}\Big(\widetilde{\H}_g(\mu')\Big)\, .$$

\begin{theorem}\label{main1}
If all parts of $\mu$ are non-negative,
$\widetilde{\H}_g(\mu)\subset \mm_{g,n}$ is a closed substack
with irreducible components of dimension either
$2g-2+n$ or $2g-3+n$.
The substack
$$\overline{\H}_g(\mu)\subset \widetilde{\H}_g(\mu)\, $$
is the union of the  components of dimension $2g-2+n$.
%
\end{theorem}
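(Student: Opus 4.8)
The plan is to prove Theorem \ref{main1} by induction on $\dim\mm_{g,n}=3g-3+n$, establishing simultaneously the strictly meromorphic analogue discussed above (that every component of $\widetilde{\H}_g(\nu)$ has dimension $2g-3+n$ when some part of $\nu$ is negative). Only finitely many pairs $(\Gamma,I)$ of a stable dual graph $\Gamma$ and a twist $I$ on it occur: for fixed $\Gamma$ the balancing condition together with the equality of degrees forced on each component by Definition \ref{ffg} bounds $|I(q,D)|$. Hence $\widetilde{\H}_g(\mu)$ is the union of the finitely many locally closed loci
\[
Z_{\Gamma,I}=\bigl\{[C,p_1,\dots,p_n]\in\mm_{g,n}\ :\ C\ \text{has dual graph}\ \Gamma\ \text{and}\ \textstyle\sum_i m_i p_i\ \text{is twisted canonical via}\ I\bigr\}.
\]
Closedness follows from the valuative criterion of properness: given a one-parameter degeneration $\pi\colon\mathcal C\to B$, with general member in $\widetilde{\H}_g(\mu)$ and special member $[C_0,p_1,\dots,p_n]\in\mm_{g,n}$, we must exhibit a twist on $C_0$. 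After a finite base change we may fix a twist on the generic fibre; comparing the relative dualizing sheaf $\omega_\pi$ with $\OO_{\mathcal C}(\sum_i m_i\sigma_i)$ (twisted on the generic fibre by the given data) on the total space $\mathcal C$, the two line bundles differ by a vertical divisor $\sum_v a_v\mathcal C_v$ supported on the components of $C_0$, with $a_v\in\ZZ$ determined up to an overall constant; restricting to $C_0$ and using adjunction reads off a candidate twist from the differences $a_v-a_{v'}$. Balancing holds since $\sum_v a_v\mathcal C_v$ has total degree $0$ on every $\mathcal C_w$; vanishing and sign hold because $a$ is constant on $\sim$-classes and is an honest function of the vertices; transitivity holds because a directed loop in $\Gamma_I(C_0)$ would give $a_{v_1}>a_{v_2}>\dots>a_{v_1}$. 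Stability of $C_0$ is automatic, and the case of a generic fibre that is itself a degenerate twisted canonical divisor is handled by composing the two sets of twist data (after a preliminary blow-up of $\mathcal C$ if necessary).

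For the dimension statement, fix $(\Gamma,I)$ and let $\nu\colon C_I\to C$ be the partial normalisation at $\mathsf{N}_I$. Definition \ref{ffg} decouples over the connected components $Y_1,\dots,Y_k$ of $C_I$, which are exactly the $\sim$-classes, hence the vertices of $\Gamma_I(C)$. A computation with adjunction shows that the condition imposed on $Y_j$ is precisely that $[Y_j]$ — equipped with the original markings lying on it and with the half-edge points produced by $\nu$ — lies in $\H_{h_j}(\mu_j)$, where $h_j$ is the arithmetic genus of $Y_j$ and $\mu_j$ has entry $m_i$ at each original marking and entry $-(1+I(q,D))$ at each half-edge $q$ on the component $D\subseteq Y_j$. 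The decisive dichotomy is that $\mu_j\geq 0$ if and only if $v_j$ is a \emph{sink} of the directed acyclic graph $\Gamma_I(C)$: an outgoing edge at $v_j$ comes from a half-edge with $I(q,D)>0$, giving an entry $-(1+I(q,D))\leq-2$, whereas at a sink every half-edge has $I(q,D)<0$, so every such entry is $\geq 0$, and the $m_i$ are $\geq 0$ by hypothesis. By Polishchuk's theorem — and its extension to the strictly meromorphic case sketched in the footnote above, and to nodal $Y_j$ by the inductive hypothesis — the condition on $Y_j$ has codimension $h_j$ in the moduli of $Y_j$ when $v_j$ is not a sink and $h_j-1$ when it is. Writing $s$ for the number of sinks, $\delta$ for the number of nodes of $C$, and using $\sum_j h_j=g-|\mathsf{N}_I|+k-1$ and $\dim\mm_\Gamma=3g-3+n-\delta$, we get
\[
\dim Z_{\Gamma,I}=2g-2+n-\bigl[(\delta-|\mathsf{N}_I|)+(k-s)\bigr].
\]
Here $\delta-|\mathsf{N}_I|\geq 0$ counts the $I=0$ edges, and $0\leq s\leq k$, so every component has dimension $\leq 2g-2+n$, with equality only when $\Gamma$ is trivial and $I\equiv 0$, i.e. on the interior $\H_g(\mu)\subseteq\cM_{g,n}$. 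Consequently the dimension $2g-2+n$ components of $\widetilde{\H}_g(\mu)$ are precisely the components of $\overline{\H}_g(\mu)$.

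It remains to exclude components of dimension $\leq 2g-4+n$, i.e. to show that any $Z_{\Gamma,I}$ with $(\delta-|\mathsf{N}_I|)+(k-s)\geq 2$ lies in the closure of a strictly larger stratum. If $\Gamma$ has an $I=0$ edge, smoothing that node stays inside $\widetilde{\H}_g(\mu)$ — the relevant $\H_{h_j}(\mu_j)$ meets the boundary transversally (again by the extended Polishchuk theorem / inductive hypothesis) — and decreases $\delta-|\mathsf{N}_I|$ by one without changing $k$ or $s$. If there is no $I=0$ edge, then $k-s\geq 2$ and $\Gamma_I(C)$ has at least two non-sink vertices; choosing a source $v$ and simultaneously smoothing all the nodes joining $D_v$ to its (necessarily sink) children $D_{v_1},\dots,D_{v_r}$ produces a curve whose merged component again yields a sink of the new level graph, so the invariant $(\delta-|\mathsf{N}_I|)+(k-s)$ drops by one — provided the resulting curve still lies in $\widetilde{\H}_g(\mu)$. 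The latter is the statement that the ``star-shaped'' stratum $\H_{h_v}(\mu_v)\times\prod_i\H_{h_{v_i}}(\mu_{v_i})$ lies in the closure of the interior of a strictly smaller space $\widetilde{\H}_{h'}(\mu')$; by the inductive hypothesis that stratum is either in that closure — in which case we descend along the invariant — or is one of the virtual components of $\widetilde{\H}_{h'}(\mu')$ (of dimension $2h'-3+n'$, with $n'$ the length of $\mu'$), in which case we repeat the construction with the virtual component substituted for its generic stratum. The main obstacle is the bookkeeping that makes this recursion terminate: one must verify that every step either strictly decreases $(\delta-|\mathsf{N}_I|)+(k-s)$ or strictly decreases $\dim\mm$ (when one passes to a virtual component of a smaller problem), so that after finitely many steps one lands in a stratum with invariant $\leq 1$, of dimension $2g-2+n$ or $2g-3+n$; verifying the transversality input for the smoothings and treating the base cases (small $\mm_{g,n}$, handled directly) are the remaining technical points.
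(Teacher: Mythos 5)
Your closedness argument and your stratum-by-stratum dimension count (reorganized over $\sim$-classes instead of irreducible components, but morally the same) track the paper's Propositions~\ref{xzs} and~\ref{dim1} closely. The genuine gap is in the final paragraph, which is where the paper's real work lives: showing that no component of $\widetilde{\H}_g(\mu)$ has dimension $\leq 2g-4+n$. In the paper this is Proposition~\ref{dim2}, proved by an entirely different mechanism --- one passes to the versal deformation space, parameterizes twisted canonical \emph{line bundles} as the closure $\cW$ of the canonical locus in the relative Picard variety, and then applies the degeneracy-locus lower bound of Proposition~\ref{qq1} to the space of sections. Your smoothing recursion tries to replace this with a purely combinatorial descent on the invariant $(\delta-|\mathsf{N}_I|)+(k-s)$, and it does not close.

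Two concrete problems with the smoothing step. First, the assertion that the children of a source vertex of $\Gamma_I(C)$ are ``necessarily sink'' is false; a DAG can have a chain $v_1\to v_2\to v_3$, in which case the source $v_1$ has a non-sink child. So the configuration you want to smooth need not exist. Second, and more fundamentally, the statement ``smoothing that node stays inside $\widetilde{\H}_g(\mu)$'' is precisely the transversality/genericity assertion that requires proof. You cite ``the extended Polishchuk theorem / inductive hypothesis,'' but Polishchuk's theorem is a statement about $\cM_{g,n}$ (nonsingular curves), and your inductive hypothesis is about $\widetilde{\H}_{g'}(\mu')$ for smaller $(g',n')$, whereas the smoothing happens inside the \emph{same} $\mm_{g,n}$: there is no reduction in $\dim\mm$, so the induction does not apply. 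The paper itself observes that a twisted canonical \emph{bundle} always smooths (Lemma~\ref{gcct}), but the passage from a smoothable bundle to a smoothable \emph{divisor} --- i.e.\ producing a section that limits correctly --- is exactly where Proposition~\ref{qq1} is invoked, and nothing in your sketch replaces it.

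A secondary issue: your exact formula $\dim Z_{\Gamma,I}=2g-2+n-[(\delta-|\mathsf{N}_I|)+(k-s)]$ relies on the condition on each $Y_j$ having codimension \emph{exactly} $h_j$ or $h_j-1$ in the moduli of nodal curves of the fixed topological type of $Y_j$. Polishchuk's theorem gives this only for nonsingular $Y_j$. For a reducible or non-compact-type $Y_j$ (which arises as soon as a $\sim$-class contains more than one irreducible component, or has self-nodes), the exact codimension is not established; only a lower bound on codimension is available from the per-irreducible-component argument, which is what the paper actually uses in Proposition~\ref{dim1}. The upper bound part of your argument survives with inequalities, but the exact formula --- which your descent argument needs in order to conclude the surviving strata have dimension precisely $2g-3+n$ --- does not.
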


\begin{theorem}\label{main2}
If $\mu$ has a negative part,
$\widetilde{\H}_g(\mu)\subset \mm_{g,n}$ is a closed substack of
pure dimension $2g-3+n$ which contains
$$\overline{\H}_g(\mu)\subset \widetilde{\H}_g(\mu)\, $$
as a union of  components.
\end{theorem}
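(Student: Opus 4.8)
The plan is to establish Theorem \ref{main2} in two stages: first, the closedness of $\widetilde{\H}_g(\mu)$ as a substack of $\oM_{g,n}$, and second, the dimension statement, which is really a lower bound and an upper bound argument combined. Closedness is the easier part. One checks that the condition of being a twisted canonical divisor is closed under specialization: given a family over a DVR with generic fiber admitting a twist $I$ realizing the line bundle isomorphism of Definition \ref{ffg}, one must produce a twist on the special fiber. Here the natural tool is semistable reduction together with the theory of limit line bundles on the special fiber. After a base change and an admissible modification of the total space, the twisting data on the generic fiber determines twisting orders along the components of the special fiber; passing to the stabilization and reading off the induced twist $I'$ on the stable model yields the required structure. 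The balancing, vanishing, sign, and transitivity conditions are all preserved because they are combinatorial shadows of the valuative data (orders of vanishing of a rational section along components), and transitivity in particular reflects the absence of directed loops in the dual graph weighted by these orders — a standard fact for line bundles that are trivial on a family.

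For the dimension count, the key observation is that $\widetilde{\H}_g(\mu)$ decomposes as a finite union over combinatorial types: a choice of a stable graph $\Gamma$ for $[C,p_1,\dots,p_n]$ together with a twist $I$ compatible with $\Gamma$. For each such pair $(\Gamma, I)$, the corresponding stratum $\widetilde{\H}_g(\mu)_{\Gamma,I}$ maps to the boundary stratum $\oM_\Gamma = \prod_v \oM_{g_v, n_v}$, and on each vertex $v$ the line bundle condition of Definition \ref{ffg}, read on the normalization, asserts that a specific twisted differential with prescribed zero/pole orders $\mu_v$ (the original markings plus the twisting contributions $\pm I(q, D)$ at the branches of normalized nodes) exists on the component $C_v$. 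Thus the vertex stratum is exactly $\H_{g_v}(\mu_v)$ inside $\oM_{g_v, n_v}$, and here one invokes the extension of Polishchuk's result quoted in the introduction: since each $\mu_v$ necessarily has a negative part whenever a node is twisted (by the sign condition, $I(q,D')$ and $I(q,D'')$ have opposite signs), the vertex stratum has pure dimension $2g_v - 3 + n_v$. Summing these local dimensions and subtracting the edge conditions gives exactly $2g - 3 + n$ for every stratum — the arithmetic is the standard genus-additivity bookkeeping $\sum_v (2g_v - 2 + n_v) = 2g - 2 + n + 2\#E$, with the $-1$ from each meromorphic vertex and a correction from the number of twisted edges conspiring to the uniform answer.

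**The main obstacle** I anticipate is the bookkeeping that forces \emph{purity} rather than just a lower bound: one must verify that no stratum has \emph{larger} dimension, which requires that every vertex with a twisted incident node genuinely inherits a negative part in its $\mu_v$ and hence genuinely drops a dimension, and that vertices in a common $\sim$-equivalence class — which by the vanishing condition carry no twisting at their mutual nodes — be handled by regluing along those untwisted nodes so that the whole equivalence class behaves like a single curve carrying an honest (possibly meromorphic) differential. In other words, one must show the $\sim$-equivalence classes are precisely the pieces on which a global twisted differential lives, and the count on a whole class of genus $g_{[v]}$ with $n_{[v]}$ special points is again $2g_{[v]} - 3 + n_{[v]}$ when the class meets a twisted node (and contributes the appropriate amount otherwise, with the global sum of residues/orders constraint $\sum m_i = 2g-2$ being automatically compatible). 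Making the genus/marking arithmetic match cleanly across all the normalization and regluing steps — and checking the strictly meromorphic hypothesis really does propagate to every relevant vertex class — is the technical heart. Finally, the containment $\hh_g(\mu) \subset \widetilde{\H}_g(\mu)$ as a union of components is immediate: $\hh_g(\mu)$ is the closure of $\H_g(\mu)$, which is open and dense in its own closure and of dimension $2g-3+n$ by Polishchuk, hence is a union of top-dimensional — therefore, by purity, all — components that meet $\cM_{g,n}$.
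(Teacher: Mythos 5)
Your closedness argument via semistable reduction and limit line bundles is essentially the paper's Proposition \ref{xzs}, so that part is fine. The dimension argument, however, has two genuine problems.

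The first is an outright error in the stratum-by-stratum count. You assert that ``each $\mu_v$ necessarily has a negative part whenever a node is twisted (by the sign condition, $I(q,D')$ and $I(q,D'')$ have opposite signs).'' This is false. The multiplicity that the normalized node point $q'$ contributes to the vector $\mu_v$ at the vertex carrying $D'$ is $-I(q,D')-1$. If $I(q,D')>0$ this is indeed negative (a pole); but the opposite branch has $I(q,D'')=-I(q,D')<0$, so it receives $-I(q,D'')-1=I(q,D')-1\geq 0$, which is a \emph{zero}, not a pole. Only one side of each twisted node is meromorphic. Concretely, for $g=1$, $\mu=(1,-1)$ and $C=C_1\cup C_2$ with $C_1$ of genus~$1$, $C_2\cong\mathbb{P}^1$ carrying both markings, $I(q,C_1)=-1$: the vertex $C_1$ gets $\mu_{C_1}=(0)$, a holomorphic vector with dimension $2g_{C_1}-2+n_{C_1}=1$, while your formula would predict $2g_{C_1}-3+n_{C_1}=0$, and the total stratum dimension would come out wrong. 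The paper's Proposition \ref{dim1} uses only that \emph{at least one} vertex carries meromorphic differentials (the one on the outgoing side of a twisted edge, or any vertex adjacent to a self-edge), giving a drop of exactly $1$ from $\sum_v(2\g(v)-2+\n(v))=2g-2+n$. If all vertices with twisted incident nodes dropped as you claim, strata with several twisted edges would have dimension far below $2g-3+n$ and the purity arithmetic would not ``conspire to the uniform answer.'' (Your bookkeeping identity $\sum_v(2g_v-2+n_v)=2g-2+n+2\#E$ is also incorrect; the correct identity, with $n_v$ counting legs plus half-edges, is $\sum_v(2g_v-2+n_v)=2g-2+n$.)

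The second and more structural issue is that the proposal is missing the lower bound. A stratum-by-stratum count, even done correctly, only establishes $\dim Z\leq 2g-3+n$ for components $Z$ supported in the boundary (this is the content of the paper's Proposition \ref{dim1}). Purity also requires $\dim Z\geq 2g-3+n$ for \emph{every} component, and this is not a corollary of Polishchuk's dimension result on the individual vertex factors: a priori a component of $\widetilde{\H}_g(\mu)$ could sit in a deeper, smaller stratum with no smoothing available. The paper's Proposition \ref{dim2} handles this by a deformation-theoretic argument: one works in the versal deformation of $C$, forms the relative Jacobian, takes the closure $\cW$ of the canonical locus, and uses the lower bound $\dim\geq d+\chi(\mathcal{L})$ (Proposition \ref{qq1}) on the space of sections of the twisted canonical bundle, together with a stabilization count, to show one can always smooth at least one node. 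This is the technical heart of the theorem and your sketch does not engage with it.
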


Theorems \ref{main1} and \ref{main2}
constrain the closures of the
strata of holomorphic and meromorphic differentinals
in geometric and combinatorial terms depending only on the partial normalization $C_I$ of $C$.
By degree considerations on the curve
$[C,p_1,\ldots,p_n]$,
there are only finitely many
twists $I$ which are relevant to determining
whether $\sum_{i=1}^n m_ip_i$ is a twisted canonical divisor.{\footnote{A
proof is given in Lemma 10 of Section \ref{tcb} below.}}

Because of the virtual components in the boundary, Theorems
\ref{main1} and \ref{main2} do not characterize the closure
$\overline{\H}_g(\mu)$. Further constraints can easily
be found on the closure via residue conditions. Our perspective
is not to exclude the virtual components, but rather to include them.
In the strictly meromorphic case, the virtual components
in the boundary are described in Lemma \ref{genn} via star
graphs defined in Section \ref{stt}.
The virtual components may be seen as
a shadow of the virtual fundamental class
of an approach to
the moduli space of differentials via relative stable maps
proposed by  Janda \cite{J}.

In Section \ref{tthh}, we discuss the relationship
between twisted canonical divisors and spin curves via
theta characteristics in case all the parts of $\mu$ are
positive and even.
The basic criterion of Proposition \ref{sqqr2} of Section \ref{xxddx}
 for the smoothability of a
twisted canonical divisor in the holomorphic case is
used in the discussion of the classical examples.

The zero locus of a nontrivial
section of $\omega_C^k$ on a nonsingular curve $C$ is a
{\em $k$-canonical divisor}.
The moduli space
of $k$-canonical divisors is an open set of
the proper moduli space of {\em twisted $k$-canonical divisors}
defined in Section \ref{nndd} for all $k\geq 0$.
However, our results for $k\neq 1$ are weaker
since the associated dimension theory in the case of
 nonsingular curves is yet to be
developed.




\subsection{Related work}
There are several approaches to compactifying the spaces of
canonical divisors:
\begin{enumerate}
\item[$\bullet$] Janda's approach (which has been discussed
briefly above)
is motivated by relative Gromov-Witten theory
and the proof of Pixton's conjecture for the double ramification
cycle \cite{jppz}. Our moduli of twisted canonical divisors
 predicts the image of
Janda's space under the map to $\overline{\mathcal{M}}_{g,n}$. Because of the
pure dimension result of Theorem \ref{main2}
 in the strictly meromorphic case,
the push-forward of the virtual class can also be predicted
(see the Appendix).

The purity of dimension in the strictly meromorphic case allows
for the approach to Pixton's formula taken in
the Appendix based entirely on classical geometry.
The development is unexpected.
For example,
 both the  definition and the calculation
of the double ramification
cycle in \cite{jppz} make essential use of
obstruction theories and virtual fundamental classes.

\vspace{8pt}
\item[$\bullet$] Sauvaget and  Zvonkine \cite{SZ} propose
a different approach to the compactification of canonical divisors
which stays closer to the projectivization of the Hodge bundle
$$ \mathbb{P}(\mathbb{E}) \rightarrow \overline{\cM}_g\, .$$
An advantage is the existence of the tautological line
$\mathcal{O}(1)$
of the projectivization which is hoped eventually to provide a link
to the volume calculations of \cite{EMZ, EO}. There should also be
connections between the recursions for fundamental classes
found by Sauvaget and Zvonkine and the conjecture of the Appendix.

\vspace{8pt}
\item[$\bullet$]
Twisting related (but not equal) to Definition 1
was studied by Gendron \cite{G}.
Chen \cite{Ch}
considered twists in the compact type case
motivated by the study of limit linear series (however
\cite[Proposition 4.23]{Ch}
 for stable curves is related to the theory of twists
presented here).
In further work of Bainbridge, Chen, Gendron, Grushevsky, and M\"oller \cite{BCG},
the authors study twists and residue conditions for stable curves
via analytic methods with the goal of characterizing the
closure $\overline{\H}_g(\mu)$.
\end{enumerate}

\noindent For the study of the moduli of canonical divisors from the point
of Teichm\"uller dynamics, we refer the reader to \cite{EMZ}.

Our paper takes a more naive perspective than the work discussed
above. We propose a precise definition of a twisted canonical divisor
and take the full associated moduli space
seriously as a mathematical object.

\subsection {Acknowledgements}
We thank E. Clader, A. Eskin,  C. Faber,
J. Gu\'er\'e,  F. Jan\-da, A. Pixton, A. Polishchuk,
A. Sauvaget, R. Thomas,
and D. Zvonkine for helpful discussions and correspondence concerning
the moduli space of canonical divisors.
At the
{\em Mathematische Arbeitstagung} in Bonn in June 2015,  there were
several talks and hallway discussions
concerning flat surfaces and the
moduli of holomorphic differentials (involving
D. Chen, S. Grushevsky, M. M\"oller, A. Zorich and others) which
inspired us to write our perspective which had been
circulating as notes for a few years. The precise
connection to Pixton's cycle in the Appendix  was found
in the summer of 2015.

G.F. was supported by the DFG Sonderforschungsbereich {\em Raum-Zeit-Materie}.
R.P. was supported by the Swiss National Science Foundation and
the European Research Council through
grants SNF-200021-143274 and ERC-2012-AdG-320368-MCSK.
R.P was also supported by SwissMap and the Einstein Stiftung.
We are particularly grateful to the Einstein Stiftung for
supporting our collaboration in Berlin.

\section{Twists of degenerating canonical bundles}
\label{degg}
\subsection{Valuative criterion}\label{jjjj}
Let $\mu=(m_1,\ldots,m_n)$ be a vector of zero and pole
multiplicities satisfying
$$\sum_{i=1}^n m_i=2g-2\, .$$
By Definition \ref{ffg},
$$\widetilde{\H}_g(\mu)\subset \mm_{g,n}$$ is easily
 seen to be a construcible subset.

\begin{proposition} \label{xzs}
The locus
$\widetilde{\H}_g(\mu)\subset \mm_{g,n}$ is Zariski closed.
\end{proposition}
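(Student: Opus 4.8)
The plan is to verify that $\widetilde{\H}_g(\mu)$ is closed by checking the valuative criterion of properness for the inclusion $\widetilde{\H}_g(\mu) \hookrightarrow \mm_{g,n}$: given a discrete valuation ring $R$ with fraction field $K$ and a map $\mathrm{Spec}(K)\to \widetilde{\H}_g(\mu)$ extending to a map $\mathrm{Spec}(R)\to \mm_{g,n}$, I must show the special point already lies in $\widetilde{\H}_g(\mu)$. After replacing $R$ by a finite extension (allowed, since closedness can be checked after such a base change and is insensitive to the stacky structure), I may assume we have a stable family $\pi\colon \mathcal{C}\to \mathrm{Spec}(R)$ with sections $p_1,\ldots,p_n$ whose generic fiber $[C_\eta, p_{i,\eta}]$ carries a twist $I_\eta$ realizing $\sum m_i p_{i,\eta}$ as a twisted canonical divisor, and I must produce a twist $I_0$ on the central fiber $[C_0, p_{i,0}]$ doing the same.

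The key steps, in order: (1) First I would analyze the generic twist. By the finiteness remark following Theorem \ref{main2} (Lemma 10 of Section \ref{tcb}), only finitely many twists are relevant on any fixed curve, so after further finite base change the generic twist is "constant" in the family in the appropriate sense; more importantly, the generic isomorphism of line bundles on the partial normalization $(C_\eta)_{I_\eta}$ can be analyzed by studying the limit. (2) Next, resolve the total space: the surface $\mathcal{C}$ may be singular at the nodes of $C_0$, and I would pass to the minimal resolution, replacing $\mathcal{C}$ by a semistable family $\mathcal{C}'\to\mathrm{Spec}(R)$ whose central fiber $C_0'$ is $C_0$ with chains of $\mathbb{P}^1$'s inserted at nodes. (3) On $\mathcal{C}'$, consider the two line bundles $\mathcal{O}_{\mathcal{C}'}(\sum m_i p_i)$ and $\omega_{\mathcal{C}'/R}$. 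On the generic fiber these differ, after pullback to the partial normalization, by the prescribed twisting divisor; equivalently, the difference of the two line bundles on $\mathcal{C}'$ is $\mathcal{O}_{\mathcal{C}'}$ twisted by a divisor supported on components of $C_0'$ (a "vertical" divisor), since its generic fiber restriction is trivial. The multiplicities of the irreducible components of $C_0'$ in this vertical divisor, read off at the nodes, will define the candidate twist $I_0$ — this is the standard mechanism by which twists arise from one-parameter degenerations. (4) Then I would verify that the integer-valued function $I_0$ so produced satisfies balancing (automatic, since the two sides of a node see opposite-signed contributions from a vertical divisor), vanishing and sign (forced by the geometry of how the vertical divisor meets the exceptional chains — components in a common $\sim$-class are linked by exceptional curves carrying zero twist), and transitivity (the partial order on components coming from their multiplicities in a vertical divisor on a resolution has no cycles). (5) Finally, restricting the isomorphism of line bundles on $\mathcal{C}'$ to (the partial normalization of) the central fiber and contracting the inserted $\mathbb{P}^1$-chains back down to $C_0$ yields exactly the isomorphism required by Definition \ref{ffg} for the twist $I_0$, so $[C_0,p_{i,0}]\in\widetilde{\H}_g(\mu)$.

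The main obstacle I anticipate is step (4) together with the bookkeeping in step (3): one must carefully track, at each node of $C_0$, how the vertical divisor on the resolved surface distributes along the exceptional chain, and then translate these surface-level multiplicities into a twist $I_0$ on $C_0$ itself that provably satisfies all four combinatorial conditions — in particular checking that the equivalence relation $\sim$ on components of $C_0$ is compatible with the resolution, and that the sign condition is not violated when two originally-distinct nodes of $C_0$ become connected through the inserted chains. A secondary subtlety is ensuring that the isomorphism of line bundles, which a priori exists only up to scaling the fibers of the vertical line bundle, descends correctly after contracting the $\mathbb{P}^1$-chains; this requires checking that the vertical twisting line bundle is trivial on each contracted chain, i.e. that its degree on each exceptional $\mathbb{P}^1$ is zero, which is exactly the balancing/consistency of the multiplicities along the chain.
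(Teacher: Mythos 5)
Your overall outline (valuative criterion, resolution of the total space, difference of line bundles giving a vertical divisor whose component multiplicities determine the twist, check the four conditions) is exactly the paper's strategy for the \emph{generically untwisted} case — that is, when the twist $I_\eta$ on the generic fiber is identically zero, so that $\OO_{C_\eta}(\sum m_ip_i) \cong \omega_{C_\eta}$ on the nose. In that situation your steps (2)--(5) are essentially the content of Section~\ref{gentw} of the paper, and they are sound.

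The gap is in step (3), and it is not minor. You write that the difference of $\OO_{\mathcal{C}'}(\sum m_i p_i)$ and $\omega_{\mathcal{C}'/R}$ is supported on components of $C_0'$ ``since its generic fiber restriction is trivial.'' But when the generic twist $I_\eta$ is nontrivial, this restriction is \emph{not} trivial: Definition~\ref{ffg} only gives an isomorphism after pulling back to the partial normalization $(C_\eta)_{I_\eta}$ \emph{and} tensoring with the twisting divisor $\sum_q I(q,D'_q)q' + I(q,D''_q)q''$, so $\OO_{C_\eta}(\sum m_ip_i)\otimes\omega_{C_\eta}^{-1}$ is a genuinely nontrivial line bundle on $C_\eta$ of multidegree determined by $I_\eta$. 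The divisor $D$ with $\OO_{\mathcal{C}'}(D)$ equal to the difference is therefore not vertical, and your mechanism for reading off $I_0$ from vertical multiplicities breaks down. The ``equivalently'' linking the two halves of your sentence in step (3) is a non-sequitur.

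The paper closes this gap with an intermediate reduction that your proposal skips. The basic nodes of $C_\eta$ where $I_\eta \neq 0$ persist as nodes of every fiber (they are not smoothed by the family); normalizing the family along these nodal sections decomposes it into the subfamilies $\overline{\mathcal{C}}^v \to \Delta$ indexed by the vertices $v$ of $\Gamma_{I_\eta}(C_\eta)$. The vanishing condition guarantees that $I_\eta$ restricted to the subcurve $C_\zeta^v$ is trivial, and the nodal sections $q_{j_v}$ connecting $C_\zeta^v$ to its complement enter as new markings with weights $-I(q_{j_v},v)-1$. Each $\overline{\mathcal{C}}^v$ is thus a \emph{generically untwisted} family of twisted canonical divisors for a smaller vector, and it is to these families that the resolution/vertical-divisor argument is applied. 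Only after producing twists on each $\overline{\mathcal{C}}^v_0$ does one reassemble them (together with the unchanged twists $I_\eta$ at the persistent nodes) into the candidate $I_0$ on $C_0$ and verify the four conditions globally. Without this decomposition, your proposed proof handles only the special case where the generic point of the family carries an honest (untwisted) canonical divisor.
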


To prove Proposition \ref{xzs}, we will use the valuative criterion.
Consider a 1-parameter family of stable $n$-pointed curves over a disk
$\Delta$,
$$\pi:\mathcal{C} \rightarrow \Delta\, , \ \ \ \ \ \ \ \
p_1,\ldots, p_n : \Delta \rightarrow \mathcal{C}\, ,$$
where the
punctured disk $\Delta^\star = \Delta \setminus 0$
maps to  $\widetilde{\H}_g(\mu)$.
The sections $p_i$ correspond to the markings.
We must show
the special fiber over $0\in \Delta$,
$$[C_0,p_1,\ldots,p_n]\, ,$$  also
lies in $\widetilde{\H}_g(\mu)$.

After possibly shrinking $\Delta$,
the topological type of the fibers over $\Delta^\star$
may be assumed to be constant.
After base change, we may assume there is no
monodromy in the components of the fibers
over $\Delta^\star$.
Finally, after further shrinking, we may assume the
twist $I$ guaranteed by Definition 1 for
each fiber over $\Delta^\star$ is the {\em same}.

Since the topological type and the twist $I$ is
the same over $\Delta^\star$, the structures
$$\text{Irr}(C_\zeta)\, ,  \ \ \  \text{Irr}(C_\zeta)^\sim\, , \ \ \
\Gamma_I(C_\zeta)\, $$
do {\em not} vary as the fiber $C_\zeta$ varies
over $\zeta\in\Delta^\star$.

The basic nodes of the special fiber $C_0$ are of two types:
basic nodes smoothed by the family $\pi$
and basic nodes {\em not} smoothed by the family.
The unsmoothed basic nodes correspond to basic nodes of $C_{\zeta\neq 0}$,
so the twist $I$ already assigns integers to the unsmoothed
basic nodes of $C_0$. However, we must assign twists to the
basic nodes of $C_0$ which are smoothed by $\pi$.

For $\zeta\in \Delta^\star$,
consider a vertex $v\in\Gamma_I(C_{\zeta})$ which corresponds
(by taking the union of the irreducible components in $\sim$-equivalence
class $v$)
to a connected subcurve $C^v_\zeta$.
As $\zeta$ varies, an associated family
$$\mathcal{C}^{v} \rightarrow \Delta^\star$$ is defined
with closure in $\mathcal{C}$ given by
$$\pi^v:\overline{\mathcal{C}}^v \rightarrow \Delta\, .$$
The markings which lie on $\overline{\mathcal{C}}^v$ yield sections
$$p_{1_v},\ldots, p_{x_v}: \Delta \rightarrow \overline{\mathcal{C}}^v\, .$$
The nodes connecting $C_\zeta^v$ to the complement in $C_\zeta$
yield further sections
$$q_{1_v},\ldots, q_{y_v}: \Delta \rightarrow \overline{\mathcal{C}}^v\, $$
at which the twist $I$ is {\em not} zero.

By the definition of a twisted canonical divisor, we have
$$\omega_{C_\zeta^v} \stackrel{\sim}= \OO_{C_\zeta^v}\Big(\sum_{i=1}^x m_{i_v}
p_{i_v} -
\sum_{j=1}^y (I(q_{j_v},v)+1)\, q_{j,v}\Big)\, . $$
Hence, the curve
$$[C_\zeta^v, p_{1_v}, \ldots, p_{x_v}, q_{1_v}, \ldots, q_{y_v}]$$
is a twisted canonical divisor for the vector
$$(m_{1_v},\ldots,m_{x_v}, -I(q_{1_v},v)-1,\ldots, -I(q_{y_v},v)-1)\ .$$
In Section \ref{gentw} below, we will show the fiber of
$$\pi^v:\overline{\mathcal{C}}^v \rightarrow \Delta\, , \ \ \ \ \ \ \ \
p_{1_v}, \ldots, p_{x_v}, q_{1_v}, \ldots, q_{y_v}
 : \Delta \rightarrow \mathcal{C}\, ,$$
over $0\in \Delta$ is a twisted canonical divisor (with
nonzero twists only at basic nodes of  $\overline{C}^v_0$
which are smoothed by the family $\pi^v$).

By considering all the vertices $v \in \Gamma_I(C_{\zeta\neq 0})$,
we define twists at all basic nodes of the special
fiber $C_0$ which are smoothed by $\pi$.
We now have defined twists at {\em all} basic nodes of $C_0$,
$$I_0: \widetilde{\mathsf{BN}}(C_0) \rightarrow \mathbb{Z}\, .$$
Via these twists, we easily see
$$[C_0, p_1,\ldots, p_n]$$
is a twisted canonical divisor.
Checking
the balancing, vanishing, sign, and transitivity
conditions is straightforward:
\begin{enumerate}
\item[$\bullet$]Balancing holds for $I_0$ by construction.
\item[$\bullet$]The vanishing, sign, and transitivity for $I_0$ are
all implied by the respective conditions for $I$
and for the twists constructed on
$\overline{\mathcal{C}}^v$.
\item[$\bullet$]
The required isomorphism of line bundles on the partial normalization
$$\nu:C_{0,I_0}\rightarrow C_0$$
determined by $I_0$ is a consequence of
corresponding isomorphisms on the partial normalizations
of $\overline{\mathcal{C}}_0^v$ determined by the twists.
\end{enumerate}

\subsection{Generically untwisted families}
\label{gentw}

In Section \ref{jjjj}, we have reduced the analysis of the
valuative criterion to the generically untwisted case.

Let $\mu=(m_1,\ldots,m_n)$ be a vector of zero and pole
multiplicities satisfying
$\sum_{i=1}^n m_i=2g-2$.
Let
$$\pi:\mathcal{C} \rightarrow \Delta\, , \ \ \ \ \ \ \ \
p_1,\ldots, p_n : \Delta \rightarrow \mathcal{C}\, $$
be a 1-parameter family of stable $n$-pointed curves
for which we have an isomorphism
\begin{equation}
\label{ddff}
\omega_{C_\zeta} \stackrel{\sim}= \OO_{C_\zeta}\Big(\sum_{i=1}^n m_{i}
p_{i}\Big)\,
\end{equation}
for  all $\zeta\in \Delta^\star$.
We will show
the special fiber over $0\in \Delta$,
$$[C_0,p_1,\ldots,p_n]\, ,$$
is a twisted canonical divisor with respect to $\mu$
via a twist
$$I_0: \widetilde{\mathsf{BN}}(C_0) \rightarrow \mathbb{Z}\, $$
supported only on the basic nodes of $C_0$ which are
smoothed by the family $\pi$.

The total space of the family
$\mathcal{C}$ has (at worst) $A_r$-singularities at
the nodes of $C_0$ which are smoothed by $\pi$.
Let
$$\widetilde{\mathcal{C}}
\stackrel{\epsilon}{\longrightarrow} \mathcal{C}
\stackrel{\pi}{\longrightarrow} \Delta$$
be the crepant resolution (via chains of $(-2)$-curves)
of all singularities
occurring at the smoothed nodes of $C_0$.
The  line bundles
\begin{equation}\label{ttww}
\epsilon^* \omega_{\pi} \ \ \ \text{and} \ \ \
\epsilon^* \OO_{\mathcal{C}}\Big(\sum_{i=1}^n m_{i}
p_{i}\Big)\,
\end{equation}
on $\widetilde{\mathcal{C}}$
are isomorphic over $\Delta^\star$ by \eqref{ddff}.
Therefore, the bundles \eqref{ttww}
 differ by a Cartier divisor $\OO_{\widetilde{\mathcal{C}}}(T)$
on
$\widetilde{\mathcal{C}}$ satisfying the following properties:
\begin{enumerate}
\item[(i)] $\OO_{\widetilde{\mathcal{C}}}(T)$ is
supported over $0\in \Delta$,
\item[(ii)] $\OO_{\widetilde{\mathcal{C}}}(T)$
restricts to the trivial
line bundle on every exceptional $(-2)$-curve of the resolution
$\epsilon$.
\end{enumerate}

By property (i), the Cartier divisor $\OO_{\widetilde{\mathcal{C}}}(T)$
 must be a sum of irreducible components of
the fiber
$\widetilde{C}_0$ of $\widetilde{\mathcal{C}}$ over $0\in \Delta$, that is,
$$T = \sum_{D\in \mathsf{Irr}(\widetilde{C}_0)} \gamma_D \cdot D \,, \ \ \mbox{ with }
\gamma_D \in \mathbb{Z} . $$
The irreducible components of $\widetilde{C}_0$
correspond{\footnote{The irreducible components
of $C_0$ maybe be partially normalized in $\widetilde{C}_0$.
We denote the strict transform of
 $D\in \mathsf{Irr}(C_0)$ also by
$D\in \mathsf{Irr}(\widetilde{C}_0)$.}} to the irreducible components of $C_0$
together with all the exceptional $(-2)$-curves,
$$\mathsf{Irr}(\widetilde{C}_0) \ = \ \mathsf{Irr}(C_0) \cup \{E_i\}\ .$$
The Cartier property
implies that
$$\gamma_D = \gamma_{D'}$$
for distinct components $D,D'\in \mathsf{Irr}(C_0)$
which intersect in at least 1 node of $C_0$
 which is {\em not} smoothed
by $\pi$.

Let $E_1 \cup \ldots \cup E_{r}\subset \widetilde{C}_0$ be the full exceptional chain of
$(-2)$-curves for the resolution of an $A_r$-singularity of $\mathcal{C}$
corresponding to a node $q\in C_0$ which is smoothed by $\pi$.
We have the following data:
\begin{enumerate}
\item[$\bullet$]
$E_1$ intersects the irreducible component $D\subset \widetilde{C}_0$,
\item[$\bullet$]
$E_r$ intersects the irreducible component $D'\subset \widetilde{C}_0$,
\end{enumerate}
see Figure 1.
\begin{figure}[h]
 \centering
  \includegraphics[scale=0.3]{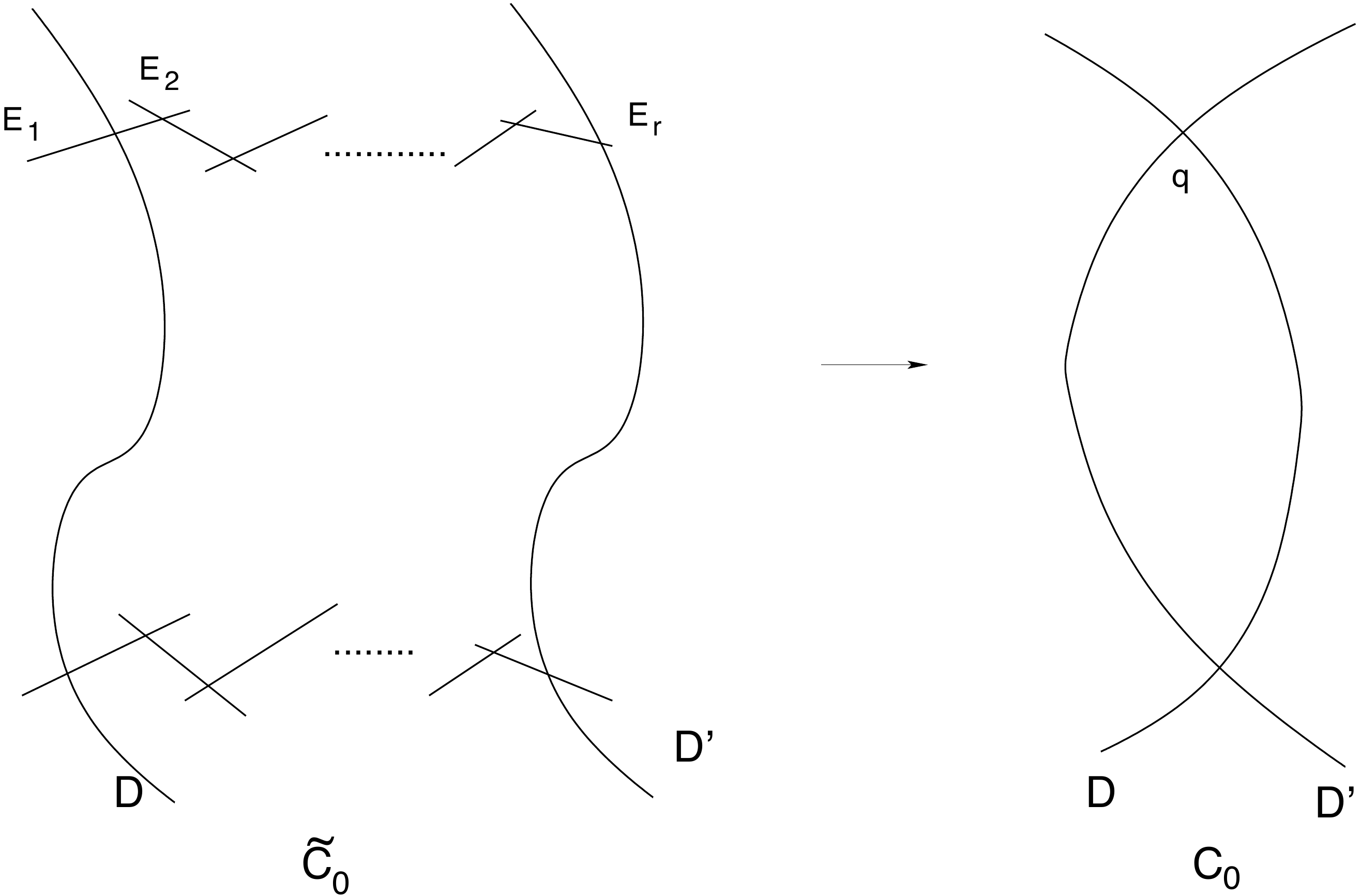}
  \caption{The curve $\widetilde{C}_0$}
\end{figure}
By property (ii), we find $r$ equations obtained
from the triviality of $\OO_{\widetilde{\mathcal{C}}}(T)$ on
each subcurve $E_i$:
\begin{equation}\label{xxqq}
2\gamma_{E_1}=\gamma_D+\gamma_{E_2}\, ,\ \ \ldots \, , \ \
2\gamma_{E_{i}}=\gamma_{E_{i-1}}+\gamma_{E_{i+1}}\,
, \ \ \ldots \, , \ \ 2\gamma_{E_r}=\gamma_{E_{r-1}}+ \gamma_{D'}\, .
\end{equation}
The equations \eqref{xxqq} are uniquely solvable{\footnote{The unique
solution yields $\gamma_{E_i}\in \mathbb{Q}$. For solution
$\gamma_{E_i}\in \mathbb{Z}$, $r+1$ must divide
$-\gamma_{D}+\gamma_{D'}$.}}
in the variables $\gamma_{E_1}, \ldots, \gamma_{E_r}$ in terms of $\gamma_D$ and
$\gamma_{D'}$.
If $\gamma_D=\gamma_{D'}$, then the unique solution is
\begin{equation}\label{xwwx}
\gamma_{E_i}=\gamma_D\, \ \ \text{for all}\ i .
\end{equation}
The solution \eqref{xwwx} is always the case when $q$ is a non-basic node
(since then $D=D'$).

If $q$ is a basic node and $\gamma_D\neq \gamma_{D'}$, then
equations \eqref{xxqq} imply the values of $\gamma_{E_i}$
are uniformly spaced and lie strictly between $\gamma_D$ and $\gamma_{D'}$.
For example if $\gamma_D$=3 and $\gamma_{D'}=9$ and $r=2$, we have
$$3 \ < \ \gamma_{E_1}=5\  <\  \gamma_{E_2}=7\ <\ 9\ . $$
The equations imply
\begin{equation}\label{gqqg}
-\gamma_D+ \gamma_{E_1} = -\gamma_{E_r}+\gamma_{D'}\, .
\end{equation}
If $q\in \mathsf{BN}(C_0)$ is a basic node, then we assign the twists
$$I_0(q,D)= -\gamma_D+ \gamma_{E_1}\, , \ \ \ I_0(q,D')= -\gamma_{D'} + \gamma_{E_r}\ .$$
If there are no $(-2)$-curves over $q\in C_0$, then
$$I_0(q,D)= -\gamma_D+ \gamma_{D'}\, , \ \ \ I_0(q,D')= -\gamma_{D'} +
\gamma_{D}\ .$$
\begin{lemma} The assignment $I_0:\mathsf{BN}(C_0)\rightarrow \mathbb{Z}$
defined above
satisfies the balancing, vanishing, sign, and transitivity
conditions.
\end{lemma}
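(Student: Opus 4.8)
The plan is to reduce all four conditions to elementary properties of the single integer-valued function $\gamma\colon\mathsf{Irr}(C_0)\to\mathbb{Z}$, $D\mapsto\gamma_D$, recording the coefficients of the vertical Cartier divisor $T$ along the strict transforms in $\widetilde{C}_0$ of the components of $C_0$. I would first collect, as the only inputs, the facts already derived above: (a) the Cartier property forces $\gamma_D=\gamma_{D'}$ whenever $D,D'\in\mathsf{Irr}(C_0)$ meet at an \emph{unsmoothed} node; (b) for a smoothed node $q\in D\cap D'$ with exceptional $(-2)$-chain $E_1\cup\cdots\cup E_r$, the equations \eqref{xxqq} have a unique solution, the $\gamma_{E_i}$ are uniformly spaced strictly between $\gamma_D$ and $\gamma_{D'}$ unless $\gamma_D=\gamma_{D'}$ (in which case all $\gamma_{E_i}=\gamma_D$ by \eqref{xwwx}), and $-\gamma_D+\gamma_{E_1}=-\gamma_{D'}+\gamma_{E_r}$ by \eqref{gqqg}; and (c) in the degenerate case $r=0$ one sets $I_0(q,D)=-\gamma_D+\gamma_{D'}$ by definition, which is \eqref{gqqg} read with $\gamma_{E_1}=\gamma_{D'}$, $\gamma_{E_r}=\gamma_D$. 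From (b)--(c) one reads off at once that $I_0(q,D)=-I_0(q,D')$ (\textbf{balancing}), that $\mathrm{sign}\,I_0(q,D)=\mathrm{sign}(\gamma_{D'}-\gamma_D)$, and that $I_0(q,D)=0$ if and only if $\gamma_D=\gamma_{D'}$.

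The crux of the argument is the claim that $\gamma$ is constant on every $\sim$-equivalence class of $\mathsf{Irr}(C_0)$. I would prove this by observing that $D\approx D'$ means there is a node $q\in D\cap D'$ with $I_0(q,D)=0$; if $q$ is unsmoothed this yields $\gamma_D=\gamma_{D'}$ by (a), and if $q$ is smoothed it yields $\gamma_D=\gamma_{D'}$ by the vanishing criterion in (b). Since $\sim$ is the equivalence relation generated by $\approx$, constancy along $\sim$ follows, and hence $\gamma$ descends to a function on $\mathsf{Irr}(C_0)^\sim$. The remaining three conditions are then immediate. \textbf{Vanishing}: if $D\sim D'$ are distinct with $q\in D\cap D'$, then $\gamma_D=\gamma_{D'}$, so $I_0(q,D)=I_0(q,D')=0$. \textbf{Sign}: for $q\in D\cap D'$ and $\widehat q\in\widehat D\cap\widehat D'$ with $D\sim\widehat D$ and $D'\sim\widehat D'$, constancy gives $\gamma_D=\gamma_{\widehat D}$ and $\gamma_{D'}=\gamma_{\widehat D'}$, so $I_0(q,D)$ and $I_0(\widehat q,\widehat D)$ have the common sign of $\gamma_{D'}-\gamma_D$; in particular positivity is preserved (indeed $I_0(q,D)>0\Longleftrightarrow I_0(\widehat q,\widehat D)>0$). \textbf{Transitivity}: an edge $v\to v'$ of $\Gamma_{I_0}(C_0)$ exists only when $\gamma_v<\gamma_{v'}$, so a directed loop would give a strictly increasing cyclic chain of integers — impossible.

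I expect the only genuinely delicate point — the main obstacle — to be the smoothed-node case of the vanishing criterion used in the crux step: namely that $I_0(q,D)=0$ forces $\gamma_D=\gamma_{D'}$ even when there is a nontrivial $(-2)$-chain over $q$. This is not visible directly from the single equality $\gamma_{E_1}=\gamma_D$ and requires either running the recursion \eqref{xxqq} down the chain or invoking the uniform-spacing/uniqueness statement, while also keeping the bookkeeping of the boundary cases ($r=0$; and non-basic nodes $D=D'$, which contribute to $T$ but are excluded from $\mathsf{BN}(C_0)$) straight. Once that is in hand, everything else is formal manipulation of the equations \eqref{xxqq}, \eqref{xwwx}, \eqref{gqqg} together with the Cartier constraint.
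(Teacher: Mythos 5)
Your proposal is correct and follows essentially the same route as the paper: both arguments hinge on indexing $\mathsf{Irr}(C_0)$ by the integer $\gamma_D$, identifying the $\sim$-classes as maximal connected subcurves of constant $\gamma$ (via \eqref{xwwx} and the uniform-spacing property of \eqref{xxqq}), and then reading off vanishing, sign, and transitivity from the total order on $\mathbb{Z}$. You spell out the case analysis (unsmoothed node, $r=0$, nontrivial chain) a bit more explicitly than the paper's terse version, but the underlying argument is identical.
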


\begin{proof} As usual, the balancing condition holds by construction
\eqref{gqqg}. We index the components $D\in\mathsf{Irr}(C_0)$
by the integer $\gamma_D$,
$$\gamma : \mathsf{Irr}(C_0) \rightarrow \mathbb{Z}\, , \ \ \ D\mapsto \gamma_D .$$
 The equivalence relation $\sim$ defined by $I_0$
is easily understood: the equivalence classes are exactly
maximal connected subcurves of $C_0$ for which the value of
$\gamma$ is constant. The basic nodes of $C_0$ lying
entirely within such a subcurve have twist $0$ by \eqref{xwwx}.
The basic nodes of $C_0$ connecting different equivalence
classes have non-zero twist by the uniform spacing property
of the solutions to \eqref{xxqq}. The vanishing condition is
therefore established. The sign condition follows again
from the uniform spacing property. A directed
edge in $\mathsf{Irr}(C_0)^\sim$ points in the direction of
higher $\gamma$ values, so transitivity is immediate.
\end{proof}

Finally, in order to show that the special fiber of $\pi$,
$$[C_0,p_1,\ldots,p_n]\, ,$$
is a twisted canonical divisor (with respect to $\mu$ and $I_0$),
we must check the isomorphism
\begin{equation}\label{gg34}
\nu^*\OO_{C_0}\left(\sum_{i=1}^n m_ip_i\right)\, \stackrel{\sim}{=}\,
\nu^*(\omega_{C_0})\otimes \OO_{C_{0,I_0}}
\left(\,\sum_{q\in \mathsf{N}_I} I(q,D'_q)\cdot q'+I(q, D''_q)\cdot q''\right)\,
\end{equation}
on the partial normalization
$$C_{0,I_0}\rightarrow C_0\, .$$

A connected component $C^v_{0,I_0}$ of the partial normalization $C_{0,I_0}$
corresponds to a vertex  $v\in\mathsf{Irr}(C_0)^\sim$.
A node of $C^v_{0,I_0}$, necessarily untwisted by $I_0$, corresponds to a
chain of $(-2)$-curves of $\widetilde{\mathcal{C}}_0$. After inserting
these corresponding chains of rational components at the nodes
of $C^v_{0,I_0}$, we obtain a curve $\widetilde{C}^v_{0,I_0}$ with canonical maps
\begin{equation}\label{llzz7}
 C^v_{0,I_0} \ \leftarrow\ \widetilde{C}^v_{0,I_0}\ \rightarrow\ \widetilde{\mathcal{C}}_0\, .
\end{equation}
The left map of \eqref{llzz7}  contracts the added rational components and
the right map of \eqref{llzz7} is the inclusion.
The isomorphism \eqref{gg34} on $C^v_{0,I_0}$
follows directly from the isomorphism
$$
\epsilon^* \OO_{\mathcal{C}}\Big(\sum_{i=1}^n m_{i}
p_{i}\Big) \, \stackrel{\sim}{=} \,
\epsilon^* \omega_{\pi} \otimes \OO_{\widetilde{\mathcal{C}}}( T)$$
on $\widetilde{C}$ after pull-back via the right map of \eqref{llzz7}
and push-forward via the left map of \eqref{llzz7}.
The proof of Proposition \ref{xzs} is complete. \qed

\subsection{Smoothings} \label{smm}
Let $\mu=(m_1,\ldots,m_n)$ be a vector of zero and pole
multiplicities satisfying
$\sum_{i=1}^n m_i=2g-2$. Let
$$[C,p_1,\ldots,p_n]\in \widetilde{\H}_g(\mu) \subset \mm_{g,n}\, ,$$
be a twisted canonical divisor
via a twist
$$I: \widetilde{\mathsf{BN}}(C) \rightarrow \mathbb{Z}\, .$$

\begin{lemma} \label{gcct} There exists a 1-parameter family
$$\pi:\mathcal{C} \rightarrow \Delta\, , \ \ \ \ \ \ \ \
p_1,\ldots, p_n : \Delta \rightarrow \mathcal{C}\, $$
of stable $n$-pointed curves
and a line bundle
$$\mathcal{L} \rightarrow \mathcal{C}$$
satisfying the following properties:
\begin{enumerate}
\item [(i)] There is an isomorphism of $n$-pointed curves
$$\pi^{-1}(0) \stackrel{\sim}{=}[C,p_1,\ldots,p_n]\, $$
under which
$$\mathcal{L}_0 \stackrel{\sim}{=} \OO_C\left(\sum_{i=1}^n m_i p_i\right)\, .$$
\item[(ii)] The generic fiber
$$C_{\zeta}=\pi^{-1}(\zeta)\, , \ \ \ {\zeta\in \Delta^\star}$$
is a nonsingular curve and
$$\mathcal{L}_\zeta \stackrel{\sim}{=} \omega_{C_\zeta}\, .$$
\end{enumerate}
\end{lemma}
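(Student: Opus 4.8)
The plan is to run the construction of Section \ref{gentw} in reverse: from the twist $I$ we extract a level function on the components of $C$ together with smoothing multiplicities at the twisted nodes, build a one-parameter smoothing of $[C,p_1,\ldots,p_n]$ with exactly those local singularity types, and obtain $\mathcal{L}$ by twisting the relative dualizing sheaf by a suitable Cartier divisor on a resolution of the total space.

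First, since $\Gamma_I(C)$ has no directed loops, the function $\ell\colon \mathsf{Irr}(C)^\sim\to \mathbb Z_{\geq 0}$ sending $v$ to the length of the longest directed path in $\Gamma_I(C)$ ending at $v$ is well defined and strictly increasing along every directed edge. Let $M$ be the least common multiple of the finitely many positive integers occurring as edge labels $I(q,v)$, put $\gamma_v:=M\ell(v)$, and extend $\gamma$ to $\mathsf{Irr}(C)$ by declaring it constant on each $\sim$-class. For a twisted node $q$ joining classes $v\to v'$ with $a_q:=I(q,v)>0$ set $k_q:=(\gamma_{v'}-\gamma_v)/a_q$; this is a positive integer with $\gamma_{v'}-\gamma_v=k_qa_q$, and $\gamma$ is constant across every untwisted node, so $\gamma$ and the $k_q$ satisfy all the numerical conditions of Section \ref{gentw}. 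Now choose a family of stable $n$-pointed curves $\pi\colon\mathcal C\to\Delta$ with $\pi^{-1}(0)=[C,p_1,\ldots,p_n]$, with $C_\zeta$ nonsingular for $\zeta\in\Delta^\star$, and with local equation $xy=u_qt^{k_q}$ at each twisted node $q$ (an $A_{k_q-1}$-singularity of $\mathcal C$, the units $u_q\in\mathbb C^\star$ left free) and $xy=t$ at each untwisted node; such a family exists because deformations of a nodal curve are unobstructed and each node may be smoothed independently with a prescribed multiplicity. Let $\epsilon\colon\widetilde{\mathcal C}\to\mathcal C$ be the crepant resolution, with chain $E^q_1,\ldots,E^q_{k_q-1}$ of $(-2)$-curves over $q$.

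On $\widetilde{\mathcal C}$ form
\[
T\ :=\ \sum_{D\in\mathsf{Irr}(C)}\gamma_D\cdot D\ +\ \sum_{q\ \text{twisted}}\ \sum_{i=1}^{k_q-1}\gamma_{E^q_i}\cdot E^q_i,
\]
where the $\gamma_{E^q_i}$ are the unique solution of the linear system \eqref{xxqq} for the chain over $q$ (uniformly spaced between $\gamma_v$ and $\gamma_{v'}$, hence integral by the choice of $k_q$). Then $\OO_{\widetilde{\mathcal C}}(T)$ is supported over $0\in\Delta$, and \eqref{xxqq} says precisely that it has degree $0$ on every exceptional $(-2)$-curve; being trivial on the contracted chains, it descends to a line bundle $\mathcal M$ on $\mathcal C$ with $\epsilon^*\mathcal M\cong\OO_{\widetilde{\mathcal C}}(T)$. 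Set $\mathcal L:=\omega_\pi\otimes\mathcal M$. Since $T$ is disjoint from the fibers over $\Delta^\star$, we obtain $\mathcal L_\zeta\cong\omega_{C_\zeta}$ with $C_\zeta$ nonsingular, which is (ii). For (i), $\mathcal L_0=\omega_C\otimes\mathcal M_0$; restricting to a connected component $C^v$ of the partial normalization $C_I$, which by the vanishing condition is the subcurve of $C$ on the $\sim$-class $v$, and using $\gamma_{E^q_1}-\gamma_D=I(q,v)$ together with $\nu^*\omega_C|_{C^v}=\omega_{C^v}\big(\sum_{q'}q'\big)$ (sum over the half-nodes of $C^v$), one computes $\mathcal L_0|_{C^v}\cong\omega_{C^v}\big(\sum_{q'}(I(q,v)+1)q'\big)$, which is isomorphic to $\OO_{C^v}\big(\sum_{p_i\in C^v}m_ip_i\big)$ by the defining isomorphism of Definition \ref{ffg} restricted to $C^v$ (this is the computation at the end of Section \ref{gentw} read in reverse).

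It remains to match the gluings of $\mathcal L_0$ and $\OO_C(\sum_i m_ip_i)$ at the twisted nodes, and this is the only genuinely delicate point — the rest being elementary combinatorics together with the standard theory of resolving and contracting $A_r$-singularities on a surface. The divisor $\sum m_ip_i$ avoids the twisted nodes, so $\OO_C(\sum_i m_ip_i)$ has trivial gluing there, whereas the gluing of $\mathcal L_0$ at a twisted node $q$ equals a fixed nonzero power of $u_q$ times a factor depending only on the chosen local coordinates; hence the free units $u_q$ can be solved for so that all gluings agree, yielding $\mathcal L_0\cong\OO_C(\sum_i m_ip_i)$ and completing (i).
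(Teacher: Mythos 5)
Your proof is correct and follows essentially the same path as the paper's: both assign a level function $\gamma_v$ (you use $M\cdot\ell(v)$ with $M$ the lcm of the positive twists, the paper uses $\operatorname{depth}(v)\cdot M_I$ with $M_I$ the product — either works), both insert chains of $\frac{\gamma_{v'}-\gamma_v}{I(q,v)}-1$ rational curves over each twisted node, build $\omega$ twisted by a vertical divisor, descend/contract, and then absorb the $h^1(\Gamma_I)$ worth of gluing ambiguity by adjusting the smoothing parameters at the twisted nodes. The only presentational difference is that you construct the singular total space with $A_{k_q-1}$-singularities directly and pass to its crepant resolution, whereas the paper destabilizes the special fiber $C$ by hand, smooths the destabilized curve over a disk, and then contracts the extra rational curves — these are two descriptions of the same geometry, and both treat the final gluing-matching step at the same informal level.
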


Lemma \ref{gcct} shows the {\em line bundle} associated
to a twisted canonical divisor can always be smoothed to
a canonical line bundle on a nonsingular curve.

\begin{proof}
The twist $I$ determines connected subcurves ${C}_v\subset C$
associated to equivalence classes $v\in \mathsf{Irr}(C)^\sim$.
Let
$$\pi:\mathcal{C} \rightarrow \Delta\, , \ \ \ \ \ \ \ \
p_1,\ldots, p_n : \Delta \rightarrow \mathcal{C}\, $$
be a smoothing of the special fiber $[C,p_1,\ldots,p_n]$.
We can construct a line bundle
$$\mathcal{L} \stackrel{\sim}{=} \omega_\pi\left(
\sum_{v\in \mathsf{Irr}(C)^\sim} \gamma_v \cdot[C_v]\right)$$
for integers $\gamma_v$.
The restriction of $\mathcal{L}$
determines a twist
$$I^\gamma: \widetilde{\mathsf{BN}}(C) \rightarrow \mathbb{Z}\, $$
by the following rule:
if $q\in C_v\cap C_w$, then
$$I^\gamma(q,C_v)= -\gamma_v+\gamma_w\ .$$
In all other cases, $I^\gamma$ vanishes.
An immediate issue is whether
$$\gamma: \mathsf{Irr}(C)^\sim \rightarrow \mathbb{Z}\, \ \ \ \
v \mapsto \gamma_v$$
can be chosen so
\begin{equation}\label{jjjwww}
I^\gamma=I\ .
\end{equation}

Unfortunately, equality \eqref{jjjwww} may be impossible
to satisfy. A simple obstruction is the following:
if $$q,q'\in \mathsf{BN}(C)$$ both lie in the intersection of
the subcurves $C_v$ and $C_w$, then
\begin{equation}\label{ffff}
I^\gamma(q,C_v) = I^\gamma(q',C_w)\ .
\end{equation}
So if
$I(q,C_v) \neq I(q',C_w)$, then $I^\gamma \neq I$ for all $\gamma$.
In order to satisfy \eqref{jjjwww}, we will destabilize the
special fiber $C$ by adding chains of rational components
at the nodes of $\mathsf{BN}(C)$ at which $I$ is supported.

Let $\Gamma_I(C)$ be the directed graph associated to
$I$ with vertex set $\mathsf{Irr}(C)^\sim$.
For $v\in \mathsf{Irr}(C)^\sim$, we define $\text{depth}(v)$
to be the length of the
longest chain of directed edges which ends in $v$.
By the transitivity condition for the twist $I$,
$\text{depth}(v)$ is finite and non-negative.{\footnote{The depth
of $v$ may be $0$.}}
Let
$$M_I = \prod_{(q,D)\in \widetilde{\mathsf{BN}}(C), \, I(q,D)>0}
I(q,D)\ .$$
We define
 $\gamma_v\in \mathbb{Z}$ by
$$\gamma_v=  {\text{depth}(v)} \cdot M_I\ .$$

Let $q\in \mathsf{BN}(C)$ be a node lying in the intersection
$$q \in C_v \cap C_w\ \ \  \text{with} \ \ \  I(q,C_v)>0\, .$$
Since $\text{depth}(v)< \text{depth}(w)$ and
$$ M_I \, | \, -\gamma_v+\gamma_w  \, \  \ \ \text{we have } \ \ \
I(q,C_v) \, | \, -\gamma_v+\gamma_w\ .$$
We add a chain of
$$\frac{-\gamma_v+\gamma_w}{I(q,C_v)} -1$$
destabilizing rational curve at each such node
$q$. For each
rational curve $P^q_i$ in the chain
$$P^q_1\, \cup\, \ldots\,\cup\, P^q_{\frac{-\gamma_v+\gamma_w}{I(q,C_v)} -1}\, ,$$
we define
$\gamma_{P^q_i} = \gamma_v+ i\cdot {I(q,C_v)}$.

The result is a new curve $\widetilde{C}$ with a
map
$$\widetilde{C} \rightarrow C$$
contracting the added chains. Moreover, for a
1-parameter family
\begin{equation}\label{fff444}
\widetilde{\pi}:\widetilde{\mathcal{C}} \rightarrow \Delta\, , \ \ \ \ \ \ \ \
p_1,\ldots, p_n : \Delta \rightarrow \widetilde{\mathcal{C}}\,
\end{equation}
smoothing the special fiber $[\widetilde{C},p_1,\ldots,p_n]$, we
construct a line bundle{\footnote{The subcurve $C_v\subset C$
corresponds to a subcurve $C_v\subset \widetilde{C}$ by strict
transformation.}}
$$\widetilde{\mathcal{L}} \stackrel{\sim}{=} \omega_{\widetilde{\pi}}\left(
\sum_{v\in \mathsf{Irr}(C)^\sim} \gamma_v \cdot[C_v]
+\sum_{q\in\mathsf{BN}(C)} \sum_{i=1}^{{\frac{\gamma_w-\gamma_v}{I(q,C_v)} -1}}
\gamma_{P_i^q} \cdot [P_i^q]
\right)\ .$$
The line bundle $\widetilde{\mathcal{L}}$ satisfies
several properties:
\begin{enumerate}
\item[(i)] $\widetilde{\mathcal{L}}|_{P_i^q}$ is trivial,
\item[(ii)] for every $v\in \mathsf{Irr}(C)^\sim$,
$$\widetilde{\mathcal{L}}|_{C_v}\, \stackrel{\sim}{=}\,
\omega_C|_{C_v}\otimes \OO_{C_v}
\left(\,\sum_{q\in \mathsf{N}_I} I(q,C_v)\cdot q \right)\,   \stackrel{\sim}{=}\,
\OO_{C_v}\left(\sum_{p_i\in C_v} m_ip_i\right)\, ,$$
\item[(iii)] for $\zeta\in \Delta^\star$,
$\widetilde{\mathcal{L}}_{\zeta} \stackrel{\sim}{=}
\omega_{\widetilde{C}_\zeta}$.
\end{enumerate}
We can contract the extra rational components $P_i^q$
in the special fiber of $\widetilde{\mathcal{C}}$ to obtain
1-parameter family of stable $n$-pointed curves
$$\pi:\mathcal{C} \rightarrow \Delta\, , \ \ \ \ \ \ \ \
p_1,\ldots, p_n : \Delta \rightarrow \mathcal{C} $$
which smooths $[C,p_1,\ldots,p_n]$.
By (i), the line bundle $\widetilde{\mathcal{L}}$
descends to
$$\mathcal{L} \rightarrow \mathcal{C}\ .$$
By (ii), for every $v\in\mathsf{Irr}(C)^\sim$,
\begin{equation}\label{qq22}
\mathcal{L}|_{C_v} \stackrel{\sim}{=}\,
\OO_{C_v}\left(\sum_{p_i\in C_v} m_ip_i\right)\, .
\end{equation}
The isomorphisms \eqref{qq22} after restriction to the subcurves
do {\em not} quite imply the required isomorphism
\begin{equation}\label{tt66}
 \mathcal{L}_0=\mathcal{L}|_{C} \,\stackrel{\sim}{=}\,
\OO_{C}\left(\sum_{i=1}^n m_ip_i\right)
 \end{equation}
because there are additional $h^1(\Gamma_I)$ factors of  $\mathbb{C}^*$
in the Picard variety of $C$. However, these $\mathbb{C}^*$-factors for
$$\mathcal{L}|_{C}\ \ \ \text{and}\ \ \ \OO_{C_v}\left(\sum_{i=1}^n m_ip_i\right)$$
can be matched by correctly choosing the smoothing parameters at the
nodes of $\widetilde{C}$ in
the original family \eqref{fff444}.
\end{proof}

\section{Dimension estimates}
\subsection{Estimates from above} \label{efa}
Let $\mu=(m_1,\ldots,m_n)$ be a vector{\footnote{For the
dimension estimates here, we consider all $\mu$. No assumptions
on the parts are made.}} of zero and pole
multiplicities satisfying
$\sum_{i=1}^n m_i=2g-2$.
The {\em boundary}
$$\partial \mm_{g,n} \subset \mm_{g,n}$$
is the divisor parameterizing stable $n$-pointed curves
with at least one node.
We estimate from above the dimension of irreducible components
of $\widetilde{\H}_g(\mu)$ supported in the boundary.

\begin{proposition}  \label{dim1}
Every irreducible
component of $\widetilde{\H}_g(\mu)$ supported entirely
in the boundary of $\mm_{g,n}$ has dimension at most
$2g-3+n$.
\end{proposition}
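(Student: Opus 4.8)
The plan is to bound the dimension of a boundary component $Z \subset \widetilde{\H}_g(\mu)$ by stratifying according to the topological type of the generic curve $[C,p_1,\ldots,p_n]$ in $Z$ and the twist $I$ witnessing membership in $\widetilde{\H}_g(\mu)$. After passing to a smaller stratum we may assume the dual graph of $C$ and the twist $I$ are constant along $Z$, so $Z$ maps finitely (up to automorphisms/gluing data) onto a subvariety of the product of the moduli spaces of the pieces of the partial normalization $C_I$. The key point is that each connected component $C_v$ of $C_I$, with its marked points coming from the original $p_i$ together with the preimages of the twisted nodes, is itself a twisted canonical divisor for the associated local vector $\mu_v$ (whose entries at the new nodes are $-I(q,\cdot)-1$, exactly as in the valuative-criterion computation of Section~\ref{jjjj}); moreover $\sum_i m_{i_v} - \sum_j (I(q_{j_v},v)+1) = 2g_v - 2$ by balancing and adjunction, so $[C_v,\ldots] \in \H_{g_v}(\mu_v)$ on the open part where $C_v$ is nonsingular.

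The main step is then a counting argument. For a fixed stratum, $\dim Z$ is at most $\sum_v \dim \widetilde{\H}_{g_v}(\mu_v)$ minus corrections for the node-identifications, which must be controlled. Write $V$ for the number of vertices of the dual graph $\Gamma$ of $C$ (i.e. irreducible components, or rather $\sim$-classes), $E$ for the total number of nodes, and $E_0$ for the number of untwisted nodes. Each component $C_v$ contributes $\dim \H_{g_v}(\mu_v) \le 2g_v - 3 + n_v$ in the strictly meromorphic local case — and this is where having at least one negative (pole) entry in $\mu_v$ matters: a $\sim$-class incident to a twisted node acquires a genuine pole, so Polishchuk's dimension count (as extended in the footnote of Section~\ref{zp}) gives codimension $g_v$ rather than $g_v-1$. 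A $\sim$-class incident to \emph{no} twisted node can only occur when it is the whole curve (no boundary), so for a boundary component every vertex sees a pole. Summing $2g_v - 3 + n_v$ over $v$, using $n_v = (\text{original markings on } C_v) + (\text{twisted half-nodes on } C_v)$, so $\sum_v (\text{twisted half-nodes}) = 2(E - E_0)$, and using the genus formula $g = \sum_v g_v + E - V + 1 - (\text{contributions from untwisted nodes being counted inside the }C_v\text{, i.e. }E_0\text{ of them})$, one gets $\sum_v(2g_v - 3 + n_v)$ and then subtracts the number of gluing conditions to reassemble $C$ from the $C_v$. The arithmetic should collapse to $2g - 3 + n$, with the "$-1$" relative to $2g-2+n$ coming precisely from the fact that there is at least one twist and hence at least one extra pole beyond what a smooth canonical divisor carries.

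I expect the bookkeeping in this Euler-characteristic/dimension count to be the main obstacle: one must be careful about (a) non-basic nodes, which are never twisted and are absorbed into a single $C_v$, lowering its geometric genus but not its stratum dimension inconsistently; (b) the distinction between $\mathsf{Irr}(C)$ and $\mathsf{Irr}(C)^\sim$, since the $C_v$ are unions of components glued along untwisted nodes, so within a single $C_v$ the relevant dimension is still $2g_v - 3 + n_v$ by Polishchuk applied to that (possibly nodal) total normalization data — one should really run the estimate on $C_I$, whose components carry poles, rather than on the $\sim$-classes directly, and then note the gluing along untwisted nodes only cuts dimension further; and (c) ruling out the degenerate possibility that some $C_v$ has its local $\mu_v$ with all non-negative parts, which as noted can only happen in the non-boundary case. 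Once these are handled, taking the maximum over the finitely many strata (finitely many dual graphs, and finitely many relevant twists by the forthcoming Lemma~10) yields the bound $2g-3+n$ for every boundary component.
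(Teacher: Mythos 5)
The overall shape of your argument — stratify by dual graph and twist, bound the dimension vertex-by-vertex via Polishchuk, and obtain a drop of one relative to the crude bound $2g-2+n$ — is the same as the paper's. But the specific mechanism you propose has a genuine error in the sign analysis at twisted nodes, and the bookkeeping you describe would in fact prove a false statement.

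The claim ``a $\sim$-class incident to a twisted node acquires a genuine pole'' is wrong. If $q$ is a twisted basic node with $D\in v$, $D'\in v'$ and $I(q,D)>0$ (so the directed edge of $\Gamma_I(C)$ points $v\to v'$), then the local vector entry at the half-node on $C_v$ is $-I(q,D)-1\le -2$, a pole, but the entry on $C_{v'}$ is $-I(q,D')-1=I(q,D)-1\ge 0$, a zero. Only the \emph{outgoing} side of a twisted node sees a pole; the incoming side sees a zero of order $I-1$. Since $\Gamma_I(C)$ is a finite DAG it always has sinks, and a sink $\sim$-class with no internal nodes and only holomorphic $\mu$-parts has a local vector with all parts $\ge 0$, so Polishchuk gives it codimension $g_v-1$, not $g_v$. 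Your proposed bound $\sum_v(2g_v-3+n_v)$, which computes to $2g-2+n-V_\sim$, would therefore not be justified, and for $V_\sim>1$ it would even contradict Proposition~\ref{dim2} (which shows every component of $\widetilde{\H}_g(\mu)$ has dimension at least $2g-3+n$). Separately, the parenthetical ``(no boundary)'' in your sentence about a $\sim$-class seeing no twisted node is also wrong: a curve with only untwisted basic nodes and/or non-basic nodes has a single $\sim$-class yet lies in the boundary; you half-acknowledge this in your caveat (a) but do not resolve it.

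The fix, and what the paper actually does, is more economical: work at the level of irreducible components of $C$ (not $\sim$-classes), use the weak bound $2\g(v)-2+\n(v)$ at \emph{every} vertex (which already sums to $2g-2+n$ by the genus and valence formulas), and then observe that because $Z$ lies in the boundary there is at least one edge, and \emph{every} node — twisted basic (on its outgoing side), untwisted basic (on both sides), or non-basic — forces a pole in the local vector of at least one incident irreducible component. That single improved vertex bound $2\g(v)-3+\n(v)$ suffices to drop the total to $2g-3+n$. Trying instead to charge the $-1$ at every vertex is both unjustified and, as noted, false.
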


\begin{proof}
Let $Z\subset \widetilde{\H}_g(\mu)$ be an irreducible component
supported entirely in $\partial \mm_{g,n}$.
Let
$\Gamma_{Z}$
 be the dual graph of $C$ for
the generic element
$$[C,p_1,\ldots,p_n] \in Z\, . $$
The dual graph{\footnote{A refined
discussion of dual graphs via half-edges is required in the
Appendix. Here, a simpler treatment is sufficient.}} consists of vertices (with genus assignment), edges, and legs
(corresponding to the markings):
$$\Gamma_{Z}= (\mathsf{V},\, \mathsf{E}\, , \mathsf{L}\,, \g:\mathsf{V}\rightarrow \mathbb{Z}_{\geq 0})\ . \ $$
Each vertex $v\in \mathsf{V}$ corresponds to an
irreducible component of $C$.
The valence $\n(v)$ counts both half-edges and legs
incident to $v$.
The genus formula is
$$g-1 = \sum_{v\in \mathsf{V}} (\g(v)-1) + |\mathsf{E}|\, .$$
Since $Z$ is supported in the boundary, $\Gamma_{Z}$
must have at least one edge.


We estimate from above the dimension of the moduli of twisted canonical
divisors in $\mm_{g,n}$ with dual graph exactly $\Gamma_{Z}$.
Let $v\in \mathsf{V}$ be a vertex corresponding
to the moduli space $\mathcal{M}_{\g(v),\n(v)}$.
The dimension of the moduli of the canonical divisors {\em on the
the component corresponding to $v$} is
bounded from above by
\begin{equation} 2\g(v)-2+ \n(v)\, .
\label{xxqq22}
\end{equation}
Here, we have used the dimension formula for the locus of
canonical divisors with prescribed zero
multiplicities \cite{Pol}. The strictly meromorphic case
has lower dimension (see Section \ref{zp}).

Summing over the vertices yields a dimension bound for $Z$:
\begin{eqnarray}
\label{ffrrdd} \mbox{dim }  Z & \leq&
 \sum_{v\in \mathsf{V}} 2\g(v)-2 +\n(v) \\ \nonumber
 & = & 2\sum_{v\in \mathsf{V}} (\g(v)-1) + 2|\mathsf{E}|+ n \\  \nonumber
& = & 2g-2 + n\, . \nonumber
\end{eqnarray}
The result falls short of the claim of the Proposition by 1.

We  improve the bound \eqref{ffrrdd} by the following
observation. The dual graph $\Gamma_Z$ must have at least one edge $e$
since $Z$ is supported in the boundary:
\begin{enumerate}
\item[$\bullet$]
If the edge $e$ corresponds to a non-basic node, then the vertex
incident to $e$ is allowed a pole at the two associated nodal points.
\item[$\bullet$]
If the edge $e$ corresponds to a basic node,
 there must be at least one vertex
$v\in \mathsf{V}$ incident to $e$ which carries meromorphic differentials.
\end{enumerate}
In either case,
we can apply the stronger bound
\begin{equation} 2\g(v)-3+ \n(v)\ .
\label{xxqq223}
\end{equation}
Since we are guaranteed at least one application of
\eqref{xxqq223}, we conclude
$$ \mbox{dim } Z \leq 2g-3+n$$
which is better than \eqref{ffrrdd} by 1.
\end{proof}

\subsection{Star graphs} \label{stt}
Let $Z\subset \widetilde{\H}_g(\mu)$ be an irreducible component
supported entirely in $\partial \mm_{g,n}$.
By the proof of Proposition \ref{dim1}, if
$\Gamma_Z$
has for every twist $I$ at least {\em two} vertices corresponding to meromorphic
differentials, then
$$ \mbox{dim } Z \leq 2g-4+n\, .$$
The dual graphs $\Gamma_Z$ with a least one edge (since
$Z$ is supported in the boundary) and carrying a twist $I$
with only one vertex
corresponding to meromorphic differentials are easy to classify.
The dual graph $\Gamma_Z$ must be a {\em star}:
\begin{enumerate}
\item[$\bullet$] The vertices $\{v_0,v_1,\ldots, v_k\}$ of $\Gamma_Z$
consist of a {\em center} vertex $v_0$ and {\em outlying} vertices
$\{v_1,\ldots, v_k\}$.
\item[$\bullet$] The edges of $\Gamma_Z$ are of two types:
\begin{enumerate}
\item[(i)] self-edges at $v_0$,
\item[(ii)] edges (possibly multiple) connecting the center
$v_0$ to the
outlying vertices.
\end{enumerate}
\item[$\bullet$] The parts{\footnote{The parts of $\mu$
correspond to the markings.}} of $\mu$
are distributed to the
vertices with {\em all}
negative parts   distributed to the center $v_0$.
\end{enumerate}

\vspace{5pt}
\begin{centering}{\hspace{50pt} \includegraphics[scale=0.5]{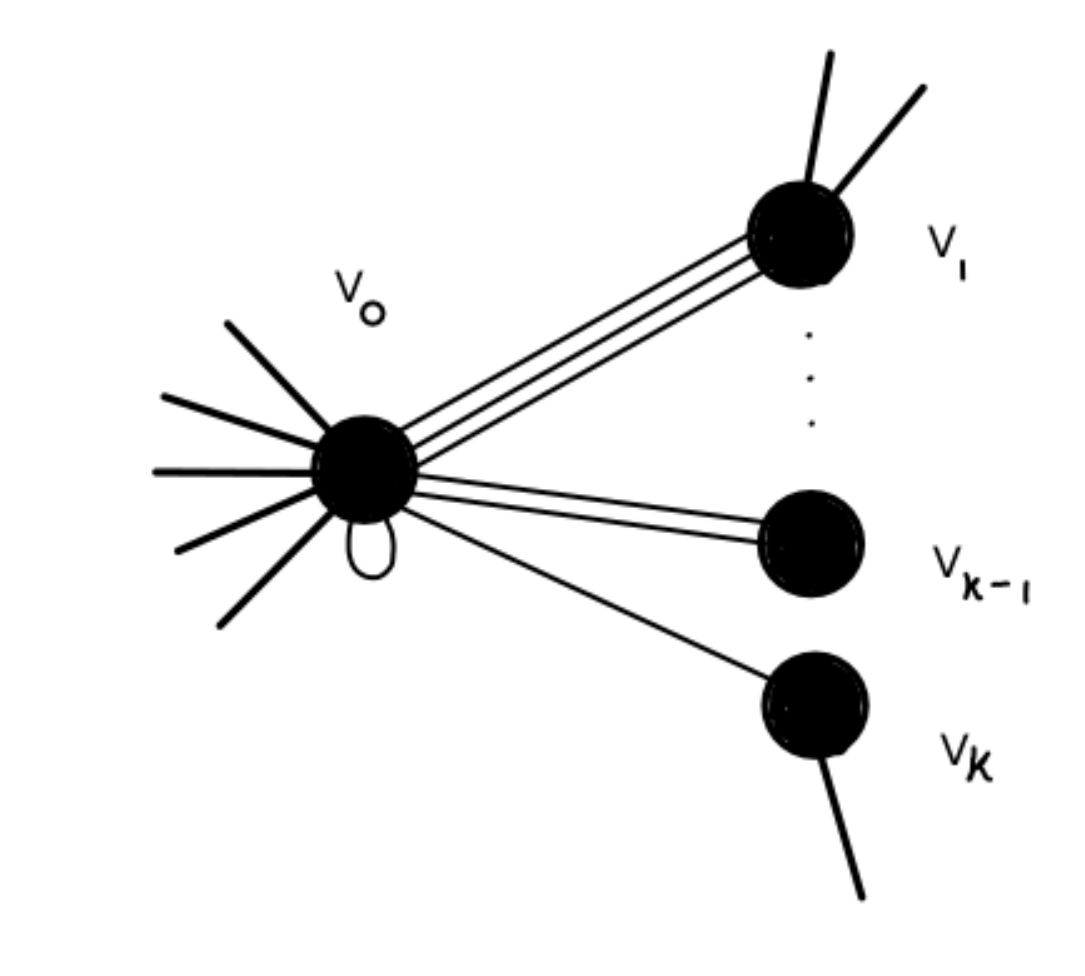}}
\end{centering}

\noindent A twist $I$ which orients every edge of type (ii)
 as outgoing from $v_0$
yields meromorphic differentials at $v_0$ only. The self-edges
at $v_0$ correspond to non-basic nodes and can not be twisted.

\subsection{Twisted canonical bundles} \label{tcb}
We have defined a twisted canonical {\em divisor} in Section \ref{tcd}.
A twisted canonical divisor determines a section (up to scale) of a twisted
canonical {\em bundle} on $C$.

\vspace{5pt}
\begin{definition} \label{ffgl} A line bundle $L$ on a connected
nodal curve
$C$ is
{\em twisted canonical} if there exists a twist
$$I: \widetilde{\mathsf{BN}}(C) \rightarrow \mathbb{Z}$$
for which there is an isomorphism
$$\nu^*L\, \stackrel{\sim}{=}\,
\nu^*(\omega_C)\otimes \OO_{C_I}
\left(\,\sum_{q\in \mathsf{N}_I} I(q,D'_q)\cdot q'+I(q, D''_q)\cdot q''\right)\,  $$
on the partial normalization $\mu:C_I\rightarrow C$.
\end{definition}
\vspace{5pt}

The limits of twisted canonical line bundles are also twisted
canonical. The proof has already been given as a step in
the proof of the properness of $\widetilde{\mathcal{H}}_g(\mu)$
in Section \ref{degg}. We state the result as a Lemma.

\begin{lemma}\label{fffff}
Let $\pi: \mathcal{C} \rightarrow \Delta$ be a flat family of
connected nodal curves, and let
$$\mathcal{L} \rightarrow \mathcal{C}$$
be a line bundle.
If $L_\zeta \rightarrow C_\zeta$ is a twisted canonical
line bundle for all $\zeta\in \Delta^\star$, then
$$L_0\rightarrow C_0$$
is also a twisted canonical line bundle.
\end{lemma}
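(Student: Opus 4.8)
The statement is essentially a repackaging of work already done in Section~\ref{degg}, so the plan is to reduce to the valuative criterion argument and extract the line-bundle statement from it. First I would reduce to the case $\Delta$ a small disk: the hypothesis is local on $\Delta$, and the conclusion concerns only the central fiber, so after shrinking and base change we may assume the topological type of $C_\zeta$ is constant over $\Delta^\star$, there is no monodromy on components, and (since there are only finitely many relevant twists by the degree bound of Lemma~10) the twist $I$ realizing $L_\zeta$ as twisted canonical is the \emph{same} for every $\zeta\in\Delta^\star$. This is exactly the reduction performed for Proposition~\ref{xzs}, and the only difference is that here the family of line bundles $\mathcal{L}$ is given to us rather than being $\mathcal{O}_{\mathcal{C}}(\sum m_i p_i)$; nothing in the reduction used that specific form.

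Next I would run the vertex-by-vertex analysis of Section~\ref{jjjj}. For each vertex $v\in\Gamma_I(C_{\zeta\neq 0})$, taking the closure of the subcurve $C^v_\zeta$ inside $\mathcal{C}$ gives a family $\pi^v:\overline{\mathcal{C}}^v\to\Delta$ carrying the sections coming from the original markings on $C^v_\zeta$ together with the sections $q_{j_v}$ at the nodes where $I$ is nonzero. The twisted canonical condition for $L_\zeta$ says precisely that $\mathcal{L}|_{C^v_\zeta}$ agrees with $\omega_{C^v_\zeta}$ twisted by the prescribed multiples of the $q_{j_v}$, so $[C^v_\zeta,\ldots]$ together with the restricted line bundle is a generically untwisted family of the type treated in Section~\ref{gentw}: the line bundle $\mathcal{L}|_{\overline{\mathcal{C}}^v}$ restricts, over $\Delta^\star$, to $\omega_{\pi^v}$ twisted by the sections $q_{j_v}$. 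Applying the argument of Section~\ref{gentw} (crepant resolution of $A_r$-singularities, the Cartier divisor $T$ supported over $0$, solving the $(-2)$-curve equations~\eqref{xxqq}, and reading off twists via~\eqref{gqqg}) produces a twist $I_0^v$ on the basic nodes of $\overline{C}^v_0$ smoothed by $\pi^v$ exhibiting $\mathcal{L}_0|_{C^v_0}$ as twisted canonical.

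Finally I would assemble these local twists with the unchanged twist $I$ on the unsmoothed nodes of $C_0$ into a single twist $I_0:\widetilde{\mathsf{BN}}(C_0)\to\mathbb{Z}$, exactly as in the last part of Section~\ref{jjjj}: balancing holds by construction, while vanishing, sign and transitivity for $I_0$ follow from the corresponding conditions for $I$ and for the $I_0^v$. The required isomorphism
$$\nu^*L_0\ \stackrel{\sim}{=}\ \nu^*(\omega_{C_0})\otimes\OO_{C_{0,I_0}}\Big(\,\sum_{q\in\mathsf{N}_{I_0}} I_0(q,D'_q)\cdot q'+I_0(q,D''_q)\cdot q''\Big)$$
on the partial normalization is glued from the isomorphisms on the partial normalizations of the $\overline{C}^v_0$, since those are compatible over the nodes where $I$ is unsmoothed. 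I expect the only real point requiring care — the ``main obstacle'' — to be bookkeeping rather than mathematics: verifying that the twists $I_0^v$ produced on the various $\overline{\mathcal{C}}^v$ are mutually consistent at nodes shared between two subcurves (both of whose components lie in the same $\sim$-class under $I$), and that the $\mathbb{C}^*$-factors in $\mathrm{Pic}(C_0)$ coming from $h^1(\Gamma_{I_0})$ match up — but both of these were already handled in Sections~\ref{jjjj}--\ref{gentw}, and here, since $\mathcal{L}$ is globally defined on $\mathcal{C}$ from the start, the matching of $\mathbb{C}^*$-factors is automatic (there is no freedom in choosing smoothing parameters to worry about). Hence the Lemma follows.
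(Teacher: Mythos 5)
Your proposal is correct and takes exactly the approach the paper intends: the paper's stated proof of Lemma~\ref{fffff} is literally a back-reference to the valuative-criterion argument of Sections~\ref{jjjj}--\ref{gentw}, and your expansion (reduction to constant topological type and constant twist, vertex-by-vertex application of the generically untwisted case via the crepant resolution and the $(-2)$-curve equations~\eqref{xxqq}, and assembly of the twist $I_0$) is precisely that argument, together with the correct observation that nothing there uses the specific form $\mathcal{O}_C\bigl(\sum m_i p_i\bigr)$ of the line bundle. The only cosmetic slip is the reference to ``original markings'' on $C^v_\zeta$, which do not exist in the line-bundle setting of Definition~\ref{ffgl}; only the node sections $q_{j_v}$ are needed, and your argument in fact uses only those.
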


The definition of a twisted canonical line bundle does {\em not}
specify a twist. Only the existence of a twist is required.
However, there are only finitely many possible twists.

\begin{lemma} If $L\rightarrow C$ is a twisted
canonical line bundle, there are only {\em finitely} many twists
$$I: \widetilde{\mathsf{BN}}(C) \rightarrow \mathbb{Z}$$
for which there exists an isomorphism
$$\nu^*L\, \stackrel{\sim}{=}\,
\nu^*(\omega_C)\otimes \OO_{C_I}
\left(\,\sum_{q\in \mathsf{N}_I} I(q,D'_q)\cdot q'+I(q, D''_q)\cdot q''\right)\,  $$
on the partial normalization $\mu:C_I\rightarrow C$.
\end{lemma}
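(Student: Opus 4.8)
The plan is to reduce the Lemma to a single uniform estimate: I claim there is an integer $B$, depending only on $C$ and $L$, such that every twist $I$ realizing the isomorphism of Definition \ref{ffgl} on $\nu:C_I\to C$ satisfies $|I(q,D)|\le B$ for all $(q,D)\in\widetilde{\mathsf{BN}}(C)$. Since $C$ has only finitely many irreducible components and nodes, the set $\widetilde{\mathsf{BN}}(C)$ is finite; once the estimate is established, the admissible twists form a subset of the finite set of functions $\widetilde{\mathsf{BN}}(C)\to\{-B,\ldots,B\}$, which proves the Lemma.

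The estimate I would obtain by degree counting on subcurves. Fix a twist $I$ realizing the isomorphism. Restricting that isomorphism to the strict transform of an irreducible component $D\subset C$ in $C_I$ and taking degrees gives
$$\deg_D L\;=\;\deg_D\omega_C\;+\;\sum_{q\in\mathsf{N}_I,\ q\ \text{on}\ D} I(q,D)\,.$$
Summing this identity over the components contained in an arbitrary subcurve $Y\subseteq C$ and using the balancing condition to cancel the contributions of the nodes internal to $Y$, one gets
$$\deg_Y L-\deg_Y\omega_C\;=\;\sum_{q} I(q,\text{in})\,,$$
where $q$ runs over the basic nodes in $\mathsf{N}_I$ joining $Y$ to its complement and $I(q,\text{in})$ denotes the twist on the branch lying in $Y$. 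The left side is an integer determined by $Y$, $L$, and $\omega_C$ alone, hence bounded by $B:=\max_{Y\subseteq C}|\deg_Y L-\deg_Y\omega_C|$, a maximum over the finitely many subcurves of $C$.

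It remains to pass from this ``cut'' identity to a pointwise bound, and here is where transitivity and the sign condition enter. Since $\Gamma_I(C)$ is acyclic, I would fix an ordering $v_1,\ldots,v_m$ of $\mathsf{Irr}(C)^\sim$ in which every directed edge runs from a smaller to a larger index, and set $Y_k=\bigcup_{i\le k}C_{v_i}$. Any $q\in\mathsf{N}_I$ joins two distinct classes $v_a,v_b$ (distinct by the vanishing condition), say with $a<b$; by the definition of $\Gamma_I(C)$ together with the sign condition, $I(q,\text{in}\ v_a)>0$, and the same holds for every twisted node joining $Y_a$ to its complement. All terms in the identity for $Y=Y_a$ are then positive, so each is at most the total: $0<I(q,\text{in}\ v_a)\le\deg_{Y_a}L-\deg_{Y_a}\omega_C\le B$, and balancing gives $|I(q,D)|\le B$ on both branches of $q$. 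Since $q\in\mathsf{N}_I$ was arbitrary and $I$ vanishes off $\mathsf{N}_I$, the bound holds on all of $\widetilde{\mathsf{BN}}(C)$. The one genuinely substantive point is this last step: the per-component identity alone controls only a signed sum of twists along each component, and it is the acyclicity of $\Gamma_I(C)$ together with the sign condition — which force a globally coherent orientation, hence subcurves $Y_k$ across whose boundary every twisted node points outward — that converts that signed sum into a bound on the individual twist values.
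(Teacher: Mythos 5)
Your proof is correct and uses essentially the same strategy as the paper: degree counting on subcurves combined with the acyclicity of $\Gamma_I(C)$, which forces the twists across a suitable cut to all be positive and hence individually bounded by the fixed degree difference. The paper phrases the induction as ``pick a source vertex, bound its outgoing twists, repeat on the rest of the graph,'' whereas you package the same idea into a uniform bound via a topological ordering and the prefix subcurves $Y_a$, which makes the iteration explicit and is a slightly cleaner write-up of the identical argument.
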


\begin{proof}
Let $I$ be a twist for which the above isomorphism holds.
By definition,  $I$ must satisfy the balancing, vanishing,
sign, and transitivity conditions of Section \ref{twww}.
Since there are no directed loops in the graph $\Gamma_I(C)$,
there must be a vertex $v$ with only outgoing arrows.
Let $$C_v \subset C\ $$
be the associated subcurve.
The twist $I$ is always positive on the nodes
$\mathsf{N}_I$ incident to the subcurve $C_v$ (on the
side of $C_v$).
Since the degrees of $\nu^*(L)|_{C_v}$ and
$\nu^*(\omega_C)|_{C_v}$ are determined, we obtain
a bound on the twists of $I$ on the nodes $\mathsf{N}_I$
incident to $C_v$: only finitely many values for
$I$ on these nodes are permitted.

Next, we find a vertex $v'\in \Gamma_I(C)$ which has only
 outgoing arrows (except from possibly $v$).
Repeating the above argument easily yields the
required finiteness statement for $I$.
\end{proof}

\subsection{Estimates from below}\label{efb}
Let $\mu=(m_1,\ldots,m_n)$ be a vector of zero and pole multiplicities
satisfying
$\sum_{i=1}^n m_i=2g-2$.
We now prove a lower bound on the dimension of irreducible components
of $\widetilde{\H}_g(\mu)$.

\begin{proposition}  \label{dim2}
Every irreducible
component of $\widetilde{\H}_g(\mu)$
has dimension at least
$2g-3+n$.
\end{proposition}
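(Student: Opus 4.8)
The plan is to produce, for each irreducible component $Z\subset\widetilde\H_g(\mu)$, a lower bound on its dimension by a deformation-theoretic/parameter-count argument, matching the upper bound $2g-3+n$ from Proposition \ref{dim1} in the boundary case and the Polishchuk dimension in the interior case. First I would dispose of the interior: a component $Z$ meeting $\cM_{g,n}$ has an open dense piece inside $\H_g(\mu)$, which by Polishchuk \cite{Pol} (and its meromorphic extension described in Section \ref{zp}) has pure dimension $2g-2+n$ in the holomorphic case and $2g-3+n$ in the strictly meromorphic case; either way $\dim Z\geq 2g-3+n$. So the real content is a component $Z$ supported entirely in $\partial\mm_{g,n}$.

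For such $Z$, take the generic point $[C,p_1,\ldots,p_n]\in Z$ with dual graph $\Gamma_Z=(\mathsf V,\mathsf E,\mathsf L,\g)$ and a twist $I$ realizing the twisted-canonical condition. The key idea is to build the component from the bottom up: for each vertex $v\in\mathsf V$, the restriction of the twisted canonical divisor to the subcurve $C_v$ (with its markings from $\mu$ together with the nodal points, each assigned the weight $-I(q,D)-1$ coming from the twist, exactly as in the valuative-criterion computation of Section \ref{jjjj}) is itself a twisted canonical divisor for a modified vector $\mu_v$ on $\cM_{\g(v),\n(v)}$. By the same Polishchuk dimension input — now applied on each vertex, and noting that a vertex carrying a genuine pole (which, by the analysis of Sections \ref{efa}--\ref{stt}, there is at least one of, since $Z$ lies in the boundary) contributes in the strictly meromorphic regime — each vertex locus has dimension at least $2\g(v)-3+\n(v)$ when meromorphic and $2\g(v)-2+\n(v)$ when holomorphic. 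The twists assigned to the edges of $\Gamma_Z$ are locally constant, and Lemma \ref{gcct} guarantees that any such collection of vertex-data glues to an honest family over $\mm_{g,n}$ lying in $\widetilde\H_g(\mu)$ — more precisely, one fixes the $\sim$-classes, smooths the non-basic nodes and the untwisted nodes as allowed, and uses the $h^1(\Gamma_I)$ torus of gluing parameters at the twisted nodes. Summing the vertex dimensions and adding the $|\mathsf E|$ node-smoothing (or gluing-parameter) directions, the genus formula $g-1=\sum_v(\g(v)-1)+|\mathsf E|$ yields a lower bound of exactly $2g-3+n$, provided precisely one vertex is counted meromorphically; if more vertices are meromorphic the naive count drops, but then the gluing picture supplies compensating moduli (residue constraints are vacuous in the twisted setting, and the extra meromorphic vertices come with their own smoothing parameters) so that the total is still at least $2g-3+n$.

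The main obstacle I anticipate is making the gluing step genuinely produce a family of the claimed dimension rather than merely an upper bound: one must check that the parameters — vertex moduli, node-smoothing parameters, and the $(\mathbb C^*)^{h^1(\Gamma_I)}$ of twisted-gluing data — are independent and all move inside $\widetilde\H_g(\mu)$, i.e. that no unexpected obstruction forces a drop in dimension. This is exactly the delicate point flagged at the end of the proof of Lemma \ref{gcct} (matching the $\mathbb C^*$-factors in $\mathrm{Pic}^0(C)$), and I would handle it by the same argument: choosing the smoothing parameters at the twisted nodes of $\widetilde C$ to cut out the correct Picard condition, which shows the locus through $[C,p_1,\ldots,p_n]$ has at least the asserted dimension. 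The bookkeeping for the "several meromorphic vertices" subcase — reconciling the star-graph classification of Section \ref{stt} with the lower bound — is the part most likely to require care, but it is combinatorial rather than deep.
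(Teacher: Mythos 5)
Your overall plan — vertex-by-vertex dimension count — is not the paper's route, and it has a genuine gap that I don't think is a bookkeeping matter.

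The paper's proof works at a single point $[C,p_1,\ldots,p_n]\in\widetilde{\H}_g(\mu)$ and shows directly that the germ of $\widetilde{\H}_g(\mu)$ at that point has dimension at least $2g-3+n$; it does not try to compute the dimension of a given boundary stratum. Concretely: it passes to the twisted canonical line bundle $L$, works in the versal deformation space $\mathcal{V}$ of the (suitably rigidified) curve, forms the closure $\cW$ of the universal $\omega_\pi$-locus in the relative Jacobian (which has codimension $g$, so $\dim\cW=\dim\mathcal{V}-g$), uses Lemma~\ref{gcct} to guarantee $[C,L]\in\cW$, and then applies the degeneracy-locus bound of Proposition~\ref{qq1} to the space of sections of $L(-\sum_{i\le k}m_ip_i)$ over $\cW$ to estimate the local dimension of the family of twisted canonical divisors. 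The conclusion is reached by contradiction: if a component $Z$ had dimension $<2g-3+n$, the estimate forces some node of the generic curve to be smoothable inside $\widetilde{\H}_g(\mu)$, so $Z$ was never a component.

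Your argument instead fixes the dual graph $\Gamma_Z$ of a generic element of $Z$ and tries to bound $\dim Z$ from below by summing Polishchuk dimensions over vertices. That count reproduces the upper bound from Proposition~\ref{dim1}, and it gives $2g-3+n$ only when exactly one vertex is meromorphic. When two or more vertices carry a pole, the locus of twisted canonical divisors with that exact dual graph and twist genuinely has dimension $\leq 2g-4+n$, and there are no ``compensating moduli'': the $h^1(\Gamma_I)$ factors of $\mathrm{Pic}^0$ that you invoke at the end of Lemma~\ref{gcct} are constraints on the line bundle, not extra free parameters, and smoothing a twisted node moves you to a different stratum, which is precisely the thing that has to be proved possible rather than assumed. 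So in the multi-meromorphic case your argument shows the stratum is small, and then asserts (but does not prove) that such a stratum cannot be a component. That assertion is the entire content of the proposition; the paper proves it by the versal-deformation/Riemann--Roch argument, which you would still need.

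There is also a circularity hazard: you appeal to the star-graph classification of Section~\ref{stt} and Lemma~\ref{genn} to say ``there is at least one meromorphic vertex and generically only one.'' But Lemma~\ref{genn} is derived from Theorems~\ref{main1} and~\ref{main2}, which in turn rest on Propositions~\ref{dim1} and~\ref{dim2}. Using the star-graph classification inside the proof of Proposition~\ref{dim2} would be circular. Finally, the paper's proof has to deal carefully with unstable rational components (the $r_1,r_2$ bookkeeping and the auxiliary $q$-markings), which your sketch does not address but which is needed to even set up the versal deformation space correctly.
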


\begin{proof}
Let $[C, p_1, \ldots, p_n]\in \widetilde{\H}_g(\mu)$
be a stable $n$-pointed curve. Let
$$L \stackrel{\sim}{=} \mathcal{O}_C\left(\sum_{i=1}^{n} m_ip_i\right)\ \rightarrow\
C$$
be the associated twisted canonical bundle.
Let
$$m_1,\ldots, m_k$$
be the negative parts of $\mu$ (if there are any).
Consider the $k$-pointed nodal curve
$$[C,p_{1}, \ldots,p_k]$$
obtained by dropping{\footnote{We do {\em not} contract
unstable components.}} the markings $p_{k+1},\ldots,p_n$.
Let $r_1$ and $r_2$ denote the number of rational
components of $[C,p_{1}, \ldots,p_k]$ with exactly
$1$ and $2$ special points respectively.
To kill the automorphisms of these unstable component, we add new markings
$$q_1,\ldots, q_{2r_1+r_2}$$
to the curve $C$ (away from  $p_1,\ldots,p_k$ and
away from the nodes):
\begin{enumerate}
\item[$\bullet$] we add two $q's$ on each  components
                 with 1 special point,
\item[$\bullet$] we add one  $q$ on each  components with 2
special points.
\end{enumerate}
The result
\begin{equation} \label{vvzz}
[C,p_1,\ldots,p_k,q_1,\ldots, q_{2r_1+r_2}]
\end{equation}
is a stable pointed curve.

Let $\mathcal{V}$ be the nonsingular
versal deformation space of the $k+2r_1+r_2$-pointed
curve \eqref{vvzz},
$$\text{dim}\, \mathcal{V} =\text{dim}\, \mbox{Def}([C,p_1\ldots,p_k,
q_1,\ldots, q_{2r_1+r_2}])=3g-3+k+2r_1+r_2.$$
There is a universal curve{\footnote{We will not use the $q$-sections.}}
$$\pi: \mathcal{C} \rightarrow \mathcal{V}\, , \ \ \
p_1,\ldots, p_k: \mathcal{V} \rightarrow \mathcal{C}\, .$$
We consider the relative moduli space{\footnote{The Quot
scheme parameterization of line bundles
can be used to avoid the separation issues of
the moduli of line bundles. The dimension calculus is
parallel.}} of degree $2g-2$
line bundles on the fibers of $\pi$,
 $$\epsilon:\cB\rightarrow \mathcal{V}\, .$$

Let $\mathcal{V}^\star\subset \mathcal{V}$ be the locus of nonsingular curves
in the versal deformation space, and let
$$\cB^\star\rightarrow \mathcal{V}^\star$$ be the relative
Jacobian of degree $2g-2$.
 Let $$\cW^\star \subset \cB^\star$$ be the codimension $g$ locus in the
universal Jacobian defined fiberwise by the relative canonical bundle
$\omega_\pi$.
Let $\cW$  be the closure of $\cW^\star$ in $\cB$.
By Lemma \ref{fffff}, every point of $\cW$ parameterizes a
twisted canonical bundle.
By Lemma \ref{gcct},
the line bundle $L$ lies in $\cW$ over the special
fiber $[C]\in \mathcal{V}$.
The dimension of $\cW$ is
$$\text{dim}\, \cW \, =\,  \text{dim}\, \cB^\star -g\, =\, 3g-3+k+2r_1+r_2\ .$$

Let $[C',L']\in \cW$ be a pair where $[C']\in \mathcal{V}$ and
$$L'\rightarrow C'$$
is a twisted canonical bundle.  A {\em good section} of
$L'(-\sum_{i=1}^km_ip_i)$ on $C'$ is section $s$ satisfying the
following properties:
\begin{enumerate}
\item[$\bullet$] $s$ does {\em not} vanish at $p_1,\ldots,p_k$,
\item[$\bullet$] $s$ does {\em not}
vanish at any node of $C'$,
\item[$\bullet$] $s$ does {\em not} vanish identically
on any irreducible component of $C'$,
\item[$\bullet$] the points $p_1,\ldots,p_k$ and
$\text{Div}(s)$ together stabilize $C'$.
\end{enumerate}
Good sections are open in the space of sections of $[C',L']$.
The zeros of a good section define a twisted
canonical divisor.

Since we started with a twisted canonical divisor,
$[C, p_1, \ldots, p_n]\in \widetilde{\mathcal{H}}_g(\mu)$. We have
a good section
\begin{equation}\label{ffww}
L\left(-\sum_{i=1}^k m_i p_i\right)\, \stackrel{\sim}{=}\,
\mathcal{O}_C\left(\sum_{i=k+1}^n m_ip_i\right)\, .
\end{equation}
 associated to the
pair $[C,L]$.

We now can estimate the dimension of the space of good sections of
$$L'\left(-\sum_{i=1}^k m_i p_i\right)$$
as $[C',L']$ varies in $\cW$
near $[C,L]$ using Proposition \ref{qq1} of Section \ref{qq}.
The local dimension of the space of sections near $[C,L]$
is at least
\begin{eqnarray*}
\text{dim}\, \cW + \chi(C,L) & = &  3g-3+k+2r_1+r_2 + g-1-\sum_{i=1}^k m_i \\
& = &
4g-4+k+2r_1+r_2- \sum_{i=1}^k m_i\, .
\end{eqnarray*}
Hence, dimension of the space $\mathcal{T}$ of twisted
canonical divisors on the fibers of
$$\pi: \mathcal{C} \rightarrow \cW\,$$
near $[C, p_1, \ldots, p_n]$ is {\em at least} $4g-5+k+2r_1+r_2-\sum_{i=1}^k m_i$
.

We must further impose conditions on the twisted
canonical divisors parameterized by $\mathcal{T}$
to obtain the shape $\sum_{i=k+1}^n m_i p_i$ for the positive part.
These conditions impose at most
$$2g-2-\sum_{i=1}^km_i -(n-k)= 2g-2+k -n - \sum_{i=1}^k m_i$$
constraints. Hence, we conclude the dimension of
space $\mathcal{T}(\mu)$ of twisted
canonical divisors on the fibers of
\begin{equation}\label{szzz}
\pi: \mathcal{C} \rightarrow \cW\,
\end{equation}
near $[C, p_1, \ldots, p_n]$ of shape $\mu$ is {\em at least}
\begin{multline*}
4g-5+k+2r_1+r_2-\sum_{i=1}^k m_i - \left(2g-2 + k -n-\sum_{i=1}^k m_i\right)
\\ = 2g-3+n+2r_1+r_2\, .
\end{multline*}

Suppose  $[C, p_1, \ldots, p_n]\in \widetilde{\mathcal{H}}_g(\mu)$
is a generic element of an irreducible component $Z\subset \widetilde{\H}_g(\mu)$
of dimension strictly less than $2g-3+n$.
In the versal deformation space $\mathcal{V}$ above, we consider
the dimension of the sublocus
$$\mathcal{Z} \subset \mathcal{T}(\mu)$$
corresponding to twisted canonical divisors on the
fibers of \eqref{szzz} which have the {\em same}
dual graph as $C$. The dimension of the sublocus $\mathcal{Z}$
is equal to
$$\text{dim}\, Z + 2r_1+r_2 $$
which is less than $2g-3+n+2r_1+r_2$.
  The summands $2r_1$ and $r_2$ appear here since we do not
now quotient by the automorphism of the unstable components
of the fibers.

We conclude at least one node of the curve $C$ can be
smoothed in $\widetilde{\H}_g(\mu)$. Therefore, there
does not exist a component $Z\subset \widetilde{\H}_g(\mu)$
of dimension strictly less than $2g-3+n$.
\end{proof}

\section{Theorems \ref{main1} and \ref{main2}}

\subsection{Proof of Theorem \ref{main1}}
By Proposition \ref{xzs},
$$\widetilde{\H}_g(\mu) \subset \overline{\mathcal{M}}_{g,n}$$
is a closed subvariety.
In case all parts of
$\mu=(m_1,\ldots,m_n)$
 are non-negative,
every irreducible component of $${\H}_g(\mu)\subset \mathcal{M}_{g,n}$$
has dimension $2g-2+n$.
Hence, every irreducible component of
$$\overline{\H}_g(\mu)\subset \overline{\mathcal{M}}_{g,n}$$
has dimension $2g-2+n$.

By Proposition \ref{dim1}, every irreducible component of $\widetilde{\H}_g(\mu)$
 which is supported  in the boundary $\partial\overline{\mathcal{M}}_{g,n}$ has dimension at most $2g-3+n$.
By Proposition \ref{dim2}, every irreducible component of $\widetilde{\H}_g(\mu)$
 has dimension
at least $2g-3+n$.
Hence, the boundary components of ${\H}_g(\mu)$ all have dimension
$2g-3+n$.
\qed

\subsection{Locus of irreducible curves} \label{irrr}
From the point of view of twists, the locus  of stable pointed curves
$$\mathcal{M}_{g,n}^{\mathsf{Irr}} \subset \overline{\mathcal{M}}_{g,n}$$
with irreducible domains is very natural to consider,
$$[C,p_1,\ldots,p_n] \in \mathcal{M}_{g,n}^{\mathsf{Irr}} \ \ \
\leftrightarrow \ \ \ \text{$C$ is irreducible.}$$
Since an irreducible curve has no basic nodes, a twisted canonical divisor
$$[C,p_1,\ldots,p_n] \in \widetilde{\H}_g(\mu) \, \cap\,
\mathcal{M}_{g,n}^{\mathsf{Irr}}$$
is a usual canonical divisor
$$\mathcal{O}_C\left(\sum_{i=1}^n m_ip_i\right) \stackrel{\sim}{=}
\omega_C\ .$$
We define $\H_g^{\mathsf{Irr}}(\mu)$ by the intersection
$$\H_g^{\mathsf{Irr}}(\mu) =
\widetilde{\H}_g(\mu) \cap \mathcal{M}_{g,n}^{\mathsf{Irr}}\, .$$
\begin{lemma} If all parts of $\mu$ are non-negative,
$$\overline{\H}_g(\mu) \cap \mathcal{M}_{g,n}^{\mathsf{Irr}}
\,= \,
{\H}_g^{\mathsf{Irr}}(\mu) \, .
$$ \label{sqqr}
\end{lemma}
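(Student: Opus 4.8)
The plan is to prove the two inclusions separately, and the real content is in only one of them. The inclusion $\overline{\H}_g(\mu) \cap \mathcal{M}_{g,n}^{\mathsf{Irr}} \subseteq \H_g^{\mathsf{Irr}}(\mu)$ is immediate: $\overline{\H}_g(\mu) \subseteq \widetilde{\H}_g(\mu)$ by definition, so intersecting with $\mathcal{M}_{g,n}^{\mathsf{Irr}}$ gives a subset of $\widetilde{\H}_g(\mu) \cap \mathcal{M}_{g,n}^{\mathsf{Irr}} = \H_g^{\mathsf{Irr}}(\mu)$. So the work is entirely in showing $\H_g^{\mathsf{Irr}}(\mu) \subseteq \overline{\H}_g(\mu)$, i.e. that an irreducible stable pointed curve carrying a genuine canonical divisor $\OO_C(\sum m_i p_i) \cong \omega_C$ actually lies in the \emph{closure} of the locus of such divisors on nonsingular curves, rather than only in some virtual boundary component.

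First I would set up the dimension count. Fix $[C, p_1, \ldots, p_n] \in \H_g^{\mathsf{Irr}}(\mu)$; since $C$ is irreducible with $\delta$ nodes, its normalization $\tilde C$ has genus $g - \delta$. The key point is that on an irreducible nodal curve there are no basic nodes, so the only ``twist'' is the trivial one and the divisor is a literal canonical divisor. I would then invoke the smoothing machinery already developed: by Lemma \ref{gcct} (applied with the trivial twist) the line bundle $\OO_C(\sum m_i p_i) \cong \omega_C$ deforms, together with $C$ and the marked points, to $(\omega_{C_\zeta}, C_\zeta, p_i(\zeta))$ on a family whose generic fiber is nonsingular. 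The subtlety is that Lemma \ref{gcct} only smooths the \emph{line bundle}; to land in $\overline{\H}_g(\mu)$ I need the \emph{divisor} $\sum m_i p_i$ (in the holomorphic case, an effective divisor) to deform as well, i.e. I need the section cutting it out to deform to a section on the nearby smooth curve with zeros still concentrated at the $p_i$ with the prescribed multiplicities. Here is where the hypothesis that all parts of $\mu$ are non-negative is used: $\OO_C(\sum m_i p_i) \cong \omega_C$ has a section (the given one) vanishing to order exactly $m_i$ at $p_i$, and I would argue via the semicontinuity / deformation-of-sections argument of Proposition \ref{qq1} (the same tool used in the proof of Proposition \ref{dim2}) that good sections deform. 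Concretely, the locus $\mathcal{T}(\mu)$ of twisted canonical divisors of shape $\mu$ over the relevant family has dimension at least $2g-3+n$ near $[C,p_1,\ldots,p_n]$; combined with the fact (Theorem \ref{main1}, already available) that every boundary component of $\widetilde{\H}_g(\mu)$ has dimension exactly $2g-3+n$ while $\overline{\H}_g(\mu)$ has dimension $2g-2+n$, and with the count showing at least one node can be smoothed, I conclude $[C,p_1,\ldots,p_n]$ lies in the closure of the locus on nonsingular curves.

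The main obstacle I anticipate is exactly this deformation-of-the-section step: smoothing the line bundle is easy and essentially done in Lemma \ref{gcct}, but one must ensure that when the node is smoothed the effective divisor of the correct shape $\mu$ persists rather than degenerating (e.g.\ zeros escaping off the marked points, or the section specializing badly across the node). The argument has to use that $\omega_C$ is the canonical bundle of an \emph{irreducible} curve, so that $h^0(C, \omega_C(-\sum m_i p_i))$ and the nearby $h^0(C_\zeta, \omega_{C_\zeta}(-\sum m_i p_i(\zeta)))$ are controlled by Riemann--Roch and semicontinuity, and that the marked points impose independent-enough conditions; this is precisely the non-negativity of $\mu$ at work, ensuring the relevant sheaf $\omega_C(-\sum m_i p_i)$ behaves well. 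Once the section is shown to deform, membership in $\overline{\H}_g(\mu)$ follows, and together with the trivial reverse inclusion this establishes the claimed equality.
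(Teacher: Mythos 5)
The paper's proof is a one-line dimension comparison, whereas your plan attempts an explicit smoothing construction, and the plan as written has a genuine gap at exactly the step you yourself flag as the obstacle.

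Here is the issue. Your deformation machinery — Lemma \ref{gcct} to smooth the line bundle, then Proposition \ref{qq1} to control sections on the fibers of a family over the versal deformation — is precisely the machinery used to prove Proposition \ref{dim2}, and it yields the lower bound $2g-3+n$ on every component of $\widetilde{\H}_g(\mu)$. But by Proposition \ref{dim1} a component supported in the boundary has dimension \emph{exactly} $2g-3+n$, so this bound cannot force any node to smooth: the ``smoothing'' argument in the proof of Proposition \ref{dim2} only produces a contradiction when the component in question has dimension strictly less than $2g-3+n$. To rule out a boundary-supported component inside $\mathcal{M}_{g,n}^{\mathsf{Irr}}$ you need the strictly larger lower bound $2g-2+n$, and Proposition \ref{qq1} does not give it. The extra $+1$ only appears if you either invoke the refined Proposition \ref{qq2} (which requires producing a trivial quotient of $R^1\pi_*\mathcal{L}$, i.e. the cosection coming from $h^1(\omega_C)=1$), or, as the paper actually does, bypass deformation theory entirely and run the degeneracy-locus argument of Section \ref{zp} directly over the open substack $\mathcal{M}_{g,n}^{\mathsf{Irr}}$: since $h^0(C,\omega_C)=g$ for every irreducible (possibly nodal) genus $g$ curve, the non-injectivity locus of the evaluation map $H^0(\omega_C)\to H^0(\omega_C|_{\sum m_i p_i})$ has codimension at most $g-1$, so every component of $\widetilde{\H}_g(\mu)\cap\mathcal{M}_{g,n}^{\mathsf{Irr}}$ has dimension at least $2g-2+n$, contradicting the $2g-3+n$ ceiling on boundary-supported components. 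You do gesture at the specialness of $\omega_C$ on irreducible curves near the end of your argument, but you frame it as a way to prevent the divisor from degenerating rather than as the source of a codimension-$(g-1)$ degeneracy-locus bound, and you never derive the needed $2g-2+n$. Replacing your Proposition \ref{qq1} step with this degeneracy-locus count (or with Proposition \ref{qq2} plus the cosection from $h^1(\omega_C)=1$) closes the gap and in fact makes the rest of your construction unnecessary.
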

\begin{proof}
We must prove there does not exist a component of
$Z\subset \widetilde{\H}_g(\mu)$ generically supported in
the boundary $\partial \mathcal{M}_{g,n}^{\mathsf{Irr}} $.
We have seen that the dimension of $Z$ must be $2g-3+n$.
However, every irreducible component of the locus
of canonical divisors in
$\mathcal{M}_{g,n}^{\mathsf{Irr}}$ has dimension at least
$2g-2+n$ since the canonical bundle always has $g$ sections.
\end{proof}

In the strictly meromorphic case, the equality
\begin{equation*}
\overline{\H}_g(\mu) \cap \mathcal{M}_{g,n}^{\mathsf{Irr}}
\,= \,
{\H}_g^{\mathsf{Irr}}(\mu) \, .
\end{equation*}
also holds, but the result does
not follow from elementary dimension arguments. Instead,
the analytic perspective of flat surface is required, see \cite{Ch,SZ,W}.

By Lemma \ref{sqqr} in the holomorphic case and Theorem \ref{main2}
in the strictly meromorphic case, we conclude the following two
dimension results:
\begin{enumerate}
\item[(i)] If all parts of $\mu$ are non-negative,
${\H}_g^{\mathsf{Irr}}(\mu)$
has pure dimension $2g-2+n$.
\item[(ii)] If $\mu$ has a negative part,
${\H}_g^{\mathsf{Irr}}(\mu)$
has pure dimension $2g-3+n$.
\end{enumerate}
The dimensions (i) and (ii) are identical to the dimensions
of
$\H_g(\mu)$ in the corresponding cases.

Since $\overline{\H}_{g}(\mu) = \overline{\H}_g^{\mathsf{Irr}}(\mu)$ for all $\mu$,
the star graphs of Section \ref{stt} with self-edges at the center vertex
do {\em not} correspond to the dual graph $\Gamma_Z$ of a generic element of an irreducible
component $Z\subset \widetilde{\H}_g(\mu)$ supported in the boundary $\partial \overline{\mathcal{M}}_{g,n}$.
We state the result as a Lemma.

\begin{lemma} \label{genn} The dual graph $\Gamma_Z$ of a generic element of an irreducible
component $$Z\subset \widetilde{\H}_g(\mu)$$ supported in the boundary $\partial \overline{\mathcal{M}}_{g,n}$
 is a star
graph with no self-edges at the center.
\end{lemma}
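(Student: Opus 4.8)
The plan is to combine the classification of boundary components as star graphs (Section \ref{stt}) with the fact that $\overline{\H}_g(\mu) = \overline{\H}_g^{\mathsf{Irr}}(\mu)$, just established. Let $Z \subset \widetilde{\H}_g(\mu)$ be an irreducible component supported entirely in $\partial \overline{\mathcal{M}}_{g,n}$, with generic point $[C,p_1,\ldots,p_n]$ and dual graph $\Gamma_Z$. By Proposition \ref{dim1}, $\dim Z \leq 2g-3+n$, and by Proposition \ref{dim2}, $\dim Z \geq 2g-3+n$, so $\dim Z = 2g-3+n$. By the dimension trichotomy in Section \ref{stt}, since $Z$ achieves the top possible boundary dimension $2g-3+n$ rather than the smaller $2g-4+n$, every twist $I$ realizing the generic twisted canonical divisor on $C$ must have exactly one vertex carrying meromorphic differentials; hence $\Gamma_Z$ is a star graph with center $v_0$ and outlying vertices $v_1,\ldots,v_k$, where the edges are either self-edges at $v_0$ (non-basic nodes, necessarily untwisted) or edges from $v_0$ to the $v_i$, and all negative parts of $\mu$ are distributed to $v_0$.

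The remaining point is to rule out self-edges at the center. Suppose $\Gamma_Z$ has a self-edge at $v_0$. Consider the center component: contracting the node corresponding to a self-edge is not what we want, but rather we observe that the self-edge node is non-basic, so normalizing it (it is not in $\mathsf{N}_I$) would only partially normalize the curve. The cleaner approach is to argue via the locus of irreducible curves. Since the self-edge at $v_0$ represents a non-separating node lying on the single irreducible component underlying $v_0$, the center subcurve $C_{v_0}$ is itself a curve of positive geometric genus with a non-separating node; in particular $C_{v_0}$ is \emph{irreducible} in $C$ and carries the meromorphic differential determining the twisted canonical divisor. But then, rather than arguing on $C_{v_0}$ alone, I would use that $\overline{\H}_g(\mu) = \overline{\H}_g^{\mathsf{Irr}}(\mu)$: the existence of a component $Z$ whose generic curve has a self-edge at the center would, after an explicit smoothing of all the \emph{outlying} vertices and their connecting edges (possible by the smoothing construction of Lemma \ref{gcct} applied vertex-by-vertex), produce a nearby twisted canonical divisor supported generically on a curve with irreducible domain and a non-separating node — i.e.\ a point of $\H_g^{\mathsf{Irr}}(\mu)$ — but still inside the boundary of $\overline{\mathcal{M}}_{g,n}^{\mathsf{Irr}}$ relative to the interior $\mathcal{M}_{g,n}$. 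This contradicts Lemma \ref{sqqr} (holomorphic case) or the stated equality $\overline{\H}_g(\mu)\cap\mathcal{M}_{g,n}^{\mathsf{Irr}} = \H_g^{\mathsf{Irr}}(\mu)$ (meromorphic case), since those force all of $\H_g^{\mathsf{Irr}}(\mu)$ to lie in the closure of the interior strata, whose generic member is a \emph{nonsingular} curve.

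The main obstacle I anticipate is making the smoothing-of-outlying-vertices step precise: one must verify that smoothing exactly the edges incident to the outlying vertices (and the outlying vertices' internal nodes), while keeping the center node corresponding to the self-edge unsmoothed, stays inside $\widetilde{\H}_g(\mu)$ and preserves the generic fiber dimension. This should follow by the same partial-smoothing argument used in the proof of Proposition \ref{dim2}: the center vertex $v_0$ with its self-edge contributes a meromorphic stratum of the expected dimension $2\g(v_0)-3+\n(v_0)$ on $\mathcal{M}_{\g(v_0),\n(v_0)}$, and the outlying holomorphic strata can be deformed away without dropping dimension, so the resulting family dominates a locus of the correct dimension $2g-3+n$ whose generic member has irreducible domain. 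Once that family is in hand, the contradiction with Lemma \ref{sqqr} / Section \ref{irrr} is immediate, completing the proof.
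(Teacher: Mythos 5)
Your first paragraph is fine: the dimension trichotomy of Section \ref{stt} forces $\Gamma_Z$ to be a star. The problem lies in how you rule out self-edges, and it is a real gap, not just a technicality left to verify.

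You propose to smooth the edges joining $v_0$ to the outlying vertices while keeping the self-edge, landing in $\H_g^{\mathsf{Irr}}(\mu)$. This is the wrong direction, and the obstacle you flag at the end is genuine. In the strictly meromorphic case the required family provably does not exist: since $\overline{\H}_g(\mu)\cap\mathcal{M}_{g,n}^{\mathsf{Irr}}=\H_g^{\mathsf{Irr}}(\mu)$ and $\H_g^{\mathsf{Irr}}(\mu)$ is pure of dimension $2g-3+n$, no component of $\H_g^{\mathsf{Irr}}(\mu)$ lies in the boundary, so $\H_g^{\mathsf{Irr}}(\mu)\cap\partial\overline{\mathcal{M}}_{g,n}$ has dimension at most $2g-4+n$. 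Your smoothed points $[C']$ live in that locus, so their closure cannot contain the $(2g-3+n)$-dimensional $Z$. More to the point, the ability to smooth a twisted basic node while remaining in $\widetilde{\H}_g(\mu)$ is exactly what fails for a virtual component: if your partial-smoothing step were available, the same reasoning applied with no self-edge would show there are no virtual components at all, contradicting the genus $2$, $\mu=(1,1)$ example in Section \ref{xxddx}. Neither Lemma \ref{gcct} (which smooths all nodes at once and produces only a line bundle, not a section) nor the proof of Proposition \ref{dim2} (which shows \emph{some} node can be smoothed, with no control over which one) gives you the selective smoothing you need. Also, as written your final sentence is not a contradiction: a point of $\H_g^{\mathsf{Irr}}(\mu)$ lying both in $\overline{\H}_g(\mu)$ and in the boundary is perfectly consistent; you would still need to conclude that $[C]$ itself lies in $\overline{\H}_g(\mu)$.

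The node one should smooth is the self-edge, and this is what the paper's phrase ``for all $\mu$'' is signalling. Normalize $C$ at the (twisted) basic nodes only; the piece containing $v_0$ is the irreducible nodal curve $D_{v_0}$ of arithmetic genus $\tilde{g}=\g(v_0)+\#\{\text{self-edges}\}$, still carrying its self-edge node(s), together with the markings $\mu[v_0]$ and the pole points with multiplicities $-I[v_0]-1$ coming from the twist. Thus $[D_{v_0},\ldots]$ lies in $\H_{\tilde{g}}^{\mathsf{Irr}}\big(\mu[v_0],-I[v_0]-1\big)$, a strictly meromorphic case. Applying $\overline{\H}_{\tilde{g}}(\mu')=\overline{\H}_{\tilde{g}}^{\mathsf{Irr}}(\mu')$ at this lower genus and this $\mu'$, one obtains a one-parameter family smoothing the self-edge(s) of $D_{v_0}$ while keeping the prescribed zeros and poles. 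Gluing this family to the constant families over the outlying components along the basic node sections (the twist $I$ is unchanged there) gives a family in $\widetilde{\H}_g(\mu)$ with special fiber $[C,p_1,\ldots,p_n]$ and nearby fibers having strictly fewer self-edges. Hence $[C]$ lies in the closure of a locus with a different generic dual graph, contradicting the genericity of $[C]$ in the irreducible component $Z$. This is the argument that your proposal needs to replace the outlying-edge smoothing with.
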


Such star graphs will be called {\em simple} star graphs.
In the Appendix, in the strictly meromorphic case, we will include in the set of simple star graphs
the trivial star graph with
a center vertex carrying all the parts of $\mu$ and {\em no} edges or
outlying vertices.


\subsection{Proof of Theorem \ref{main2}}
By Proposition \ref{xzs},
$$\widetilde{\H}_g(\mu) \subset \overline{\mathcal{M}}_{g,n}$$
is a closed subvariety.
In case $\mu$ has at least one negative part,
every irreducible component of
$$\H_g(\mu) \subset \mathcal{M}_{g,n}$$
has dimension $2g-3+n$, and every irreducible component
of $\widetilde{\H}_g(\mu)$ which is supported in the boundary
$\partial\overline{\mathcal{M}}_{g,n}$
has dimension at most $2g-3+n$ by Proposition \ref{dim1}.
By Proposition \ref{dim2} every irreducible component of $\widetilde{\H}_g(\mu)$
has dimension
at least $2g-3+n$. Hence, $\widetilde{\H}_g(\mu)$
is pure of dimension $2g-3+n$.
\qed

\section{Sections of line bundles}


\subsection{Dimension estimates}
\label{qq}
Let $\mathcal{X}$ be a variety of pure dimension $d$, and let
$$\pi: \mathcal{C} \rightarrow \mathcal{X}$$
 be a flat family of nodal curves.
Let
$$\mathcal{L} \rightarrow \mathcal{C}$$
 be a line bundle with Euler characteristic $\chi(\mathcal{L})$
on the fibers of $\pi$. Let
$$\phi:\mathcal{M} \rightarrow \mathcal{X}$$
be the variety parameterizing {\em nontrivial} sections of $\mathcal {L}$
on the fibers of $\pi$,
$$\phi^{-1}(\xi) =H^0(\mathcal{C}_\xi,\mathcal{L}_\xi)\setminus \{0\}$$
for $\xi\in \mathcal{X}$.

\begin{proposition}
Every irreducible component of $\mathcal{M}$ has dimension at least
$d + \chi(\mathcal{L})$. \label{qq1}
\end{proposition}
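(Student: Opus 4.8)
The plan is to present $\mathcal{M}$, locally over $\mathcal{X}$, as the zero locus of a section of a vector bundle lying over the total space of another vector bundle, and then to apply the standard Krull codimension bound for such zero loci.

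First I would pass to a connected affine open cover $\{U\}$ of $\mathcal{X}$ over which the relative cohomology of $\mathcal{L}$ is computed universally by a two-term complex of vector bundles. Since $\pi$ is flat and proper with one-dimensional fibers and $\mathcal{L}$ is flat over $\mathcal{X}$, the theory of cohomology and base change (Grothendieck, EGA III) provides, after shrinking $U$, a complex
$$\big[\,\mathcal{K}^0 \stackrel{\psi}{\longrightarrow} \mathcal{K}^1\,\big]$$
of locally free $\OO_U$-modules, placed in degrees $0$ and $1$, such that for every morphism $T\to U$ one has $H^0(\mathcal{C}_T,\mathcal{L}_T)=\ker(\psi\otimes\OO_T)$ and $H^1(\mathcal{C}_T,\mathcal{L}_T)=\operatorname{coker}(\psi\otimes\OO_T)$. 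Writing $r_i=\operatorname{rk}\mathcal{K}^i$, the fiberwise Euler characteristic gives $r_0-r_1=\chi(\mathcal{L})$.

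Next I would identify $\mathcal{M}|_U$ explicitly. Let $p\colon \mathbb{V}(\mathcal{K}^0)\to U$ be the geometric vector bundle whose sections are sections of $\mathcal{K}^0$; since $\mathcal{X}$, hence $U$, is of pure dimension $d$, the scheme $\mathbb{V}(\mathcal{K}^0)$ is of pure dimension $d+r_0$. The bundle map $\psi$ defines a tautological section $\sigma$ of $p^\ast\mathcal{K}^1$ sending a vector $v$ over $\xi\in U$ to $\psi_\xi(v)$, and by the base-change statement a point $(\xi,v)$ with $v\neq 0$ lies in $\mathcal{M}$ precisely when $\sigma$ vanishes at it. Thus
$$\mathcal{M}|_U \;=\; Z(\sigma)\setminus z(U)\, ,$$
where $z\colon U\hookrightarrow \mathbb{V}(\mathcal{K}^0)$ is the zero section and $Z(\sigma)$ is the zero scheme of $\sigma$. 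As $\sigma$ is a section of a rank-$r_1$ bundle on the pure $(d+r_0)$-dimensional scheme $\mathbb{V}(\mathcal{K}^0)$, every irreducible component of $Z(\sigma)$ has dimension at least $d+r_0-r_1=d+\chi(\mathcal{L})$ by the Krull height theorem; deleting the closed subset $z(U)$ only removes some components outright and restricts the rest to dense opens, so every irreducible component of $\mathcal{M}|_U$ still has dimension at least $d+\chi(\mathcal{L})$.

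Finally I would globalize: the $\mathcal{M}|_U$ form an open cover of $\mathcal{M}$, so any irreducible component $\mathcal{M}_0$ of $\mathcal{M}$ meets some $\mathcal{M}|_U$ in a nonempty open subset, which is itself an irreducible component of $\mathcal{M}|_U$ of the same dimension as $\mathcal{M}_0$; hence $\dim\mathcal{M}_0\geq d+\chi(\mathcal{L})$. The only nonformal input — and the step that needs care — is the local existence of the universal two-term complex $\mathcal{K}^\bullet$: this is where properness of $\pi$, flatness of $\mathcal{L}$, and the one-dimensionality of the fibers (which forces the complex to have just two terms, so that $\ker\psi$ is genuinely $H^0$) all enter. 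Everything downstream is the routine excess-dimension count. An essentially equivalent alternative is to cut out $H^0(\mathcal{L}_\xi)$ inside a chart smooth over $\mathcal{X}$ of relative dimension $h^0(\mathcal{L}_\xi)$ by $h^1(\mathcal{L}_\xi)$ deformation-theoretic equations, with $h^0-h^1=\chi(\mathcal{L})$; the complex-of-bundles formulation is the cleanest.
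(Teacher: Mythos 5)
Your proposal is correct and takes essentially the same approach as the paper: resolve $R\pi_*\mathcal{L}$ by a two-term complex of vector bundles, realize $\mathcal{M}$ as the vanishing locus of the induced tautological section over the total space of the degree-zero term, and conclude by the standard codimension bound. The only cosmetic differences are that the paper builds the complex globally from a $\pi$-ample twist (taking $R^1\pi_*$ of the short exact sequence $0\to\mathcal{B}\to\oplus\mathcal{N}^{-k_i}\to\mathcal{L}\to 0$) and works directly on the punctured total space $V^*_{\mathcal{B}}$ with its tautological line subbundle, whereas you invoke cohomology-and-base-change locally and remove the zero section at the end; the dimension count is identical.
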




The result is immediate from the point of view of obstruction theory.
For the convenience of the reader, we include
an elementary proof based on the construction of
 the parameter space  $\mathcal{M}$.

\begin{proof}
Using a $\pi$-relatively ample bundle $\mathcal{N} \rightarrow \mathcal{C}$,
we form a quotient
                   $$\oplus_{i=1}^r \mathcal{N}^{-k_i} \rightarrow
\mathcal{L} \rightarrow 0\, , \ \ \ \ \ \  k_i>0.$$
Let $\mathcal{A}=\oplus_{i=1}^r \mathcal{N}^{-k_i}$ and consider
the associated short exact sequence
\begin{equation}\label{vv999}
0 \, \rightarrow\, \mathcal{B} \, \rightarrow\, \mathcal{A}\, \rightarrow\,
\mathcal{L}\, \rightarrow\, 0
\end{equation}
of bundles  on $\mathcal{C}$.
Since $\mathcal{A}$ is $\pi$-relatively negative,
$\mathcal{A}$ and $\mathcal{B}$ have no sections on the fibers of $\pi$. Hence,
$$V_{\mathcal{A}} = R^1\pi_*(\mathcal{A})\, \ \ \ \text{and} \ \ \
V_{\mathcal{B}} = R^1\pi_*(\mathcal{B})$$
are vector bundles of ranks $a$ and $b$ on  $\mathcal{X}$.
For $\xi \in \mathcal{X}$, we have
\begin{equation}\label{z23z}
0\, \rightarrow\, H^0(\mathcal{C}_\xi, \mathcal{L}_\xi)
\, \rightarrow \, V_{\mathcal{B},\xi}\,
\, \rightarrow V_{\mathcal{A},\xi}  \, \rightarrow\,
H^1(\mathcal{C}_\xi, \mathcal{L}_\xi)\, \rightarrow 0\, .
\end{equation}
Therefore, the ranks satisfy
 $b-a=\chi(L)$ .

The sequence in cohomology associated to \eqref{vv999} yields a bundle
map
$$ \gamma:V_{\mathcal{B}} \rightarrow V_{\mathcal{A}}$$
 on $\mathcal{X}$.
Let $V^*_{\mathcal{B}}$ be the total space of $V_{\mathcal{B}}$ with the zero section removed,
$$q: V^*_{\mathcal{B}} \rightarrow \mathcal{X}\, .$$
The pull-back to $V^*_{\mathcal{B}}$ of $V_{\mathcal{B}}$ carries
a tautological line bundle
$$ 0\rightarrow Q \rightarrow q^*(V_{\mathcal{B}})\, .$$
By \eqref{z23z},
the parameter space
$\mathcal{M}$ sits in $V^*_{\mathcal{B}}$ as the zero locus of the canonical map
$$Q \rightarrow  q^*(V_{\mathcal{A}})$$
obtained from the composition
$$Q \, \rightarrow \, q^*(V_{\mathcal{B}})\, \stackrel{q^*(\gamma)}{\longrightarrow}
\, q^*(V_{\mathcal{A}})\, .$$

Since $\mathcal{X}$ is of pure dimension $d$,
$V^*_{\mathcal{B}}$ is of pure dimension $d+b$. Finally, since
the rank of $q^*(V_{\mathcal{A}})$ is $a$, every irreducible component
of $\mathcal{M}$ is of dimension at least
$$d+b-a= d+ \chi(\mathcal{L})\ ,$$
by standard dimension theory.
\end{proof}

The following result is a consequence of the existence
of a reduced obstruction theory (see the cosection method
of \cite{KL}). Again, we give an elementary proof.
An application will appear in Section \ref{xxddx} below.

\begin{proposition}
If there exists a trivial quotient on $\mathcal{X}$,
$$R^1\pi_* \mathcal{L} \rightarrow \XC \rightarrow 0\, ,$$
then every irreducible component of $\mathcal{M}$ has dimension at least
$d + \chi(\mathcal{L})+1$. \label{qq2}
\end{proposition}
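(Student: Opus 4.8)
The plan is to mimic the proof of Proposition \ref{qq1}, but to exploit the trivial quotient $R^1\pi_*\mathcal{L}\rightarrow\XC\rightarrow 0$ to drop the rank of the obstruction bundle by one. First I would recall from the proof of Proposition \ref{qq1} that after choosing a $\pi$-relatively negative resolution $0\rightarrow\mathcal{B}\rightarrow\mathcal{A}\rightarrow\mathcal{L}\rightarrow 0$, the parameter space $\mathcal{M}$ sits inside the total space $V^*_{\mathcal{B}}$ (of dimension $d+b$, where $b=\mathrm{rk}\,V_{\mathcal{B}}$) as the zero locus of the tautological map
$$Q\,\longrightarrow\,q^*(V_{\mathcal{A}})$$
obtained by composing $Q\hookrightarrow q^*(V_{\mathcal{B}})\xrightarrow{q^*(\gamma)}q^*(V_{\mathcal{A}})$. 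The cokernel of $\gamma:V_{\mathcal{B}}\rightarrow V_{\mathcal{A}}$ is exactly $R^1\pi_*\mathcal{L}$ by the long exact sequence \eqref{z23z}. The point is that the surjection $R^1\pi_*\mathcal{L}\rightarrow\XC\rightarrow 0$ produces a surjection $V_{\mathcal{A}}\rightarrow\XC\rightarrow 0$ of vector bundles on $\mathcal{X}$ which kills the image of $\gamma$; so $\gamma$ factors through the rank $a-1$ subbundle $V_{\mathcal{A}}'=\ker(V_{\mathcal{A}}\rightarrow\XC)\subset V_{\mathcal{A}}$.

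The key step, then, is this factorization: the composite $Q\rightarrow q^*(V_{\mathcal{A}})\rightarrow q^*(\XC)$ is the zero map, since it equals $Q\rightarrow q^*(V_{\mathcal{B}})\xrightarrow{q^*(\gamma)}q^*(V_{\mathcal{A}})\rightarrow q^*(\XC)$ and $\gamma$ has image in $\ker(V_{\mathcal{A}}\rightarrow\XC)$. Hence the tautological map actually lands in the subbundle $q^*(V_{\mathcal{A}}')$ of rank $a-1$, and
$$\mathcal{M}\,=\,\{\,v\in V^*_{\mathcal{B}}\ :\ (Q\rightarrow q^*(V_{\mathcal{A}}'))_v=0\,\}$$
is the zero locus, on the $(d+b)$-dimensional variety $V^*_{\mathcal{B}}$, of a section of a rank $a-1$ bundle (more precisely, of the bundle $\mathcal{H}om(Q, q^*(V_{\mathcal{A}}'))$, which is still a vector bundle of rank $a-1$ since $Q$ is a line bundle). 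By standard dimension theory, every irreducible component of $\mathcal{M}$ has dimension at least
$$d+b-(a-1)\,=\,d+(b-a)+1\,=\,d+\chi(\mathcal{L})+1\, ,$$
using $b-a=\chi(\mathcal{L})$ as established in the proof of Proposition \ref{qq1}.

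The only subtlety I anticipate is a bookkeeping one: verifying that the surjection $R^1\pi_*\mathcal{L}\rightarrow\XC$ is compatible with the identification $R^1\pi_*\mathcal{L}=\mathrm{coker}(\gamma)$, so that the induced $V_{\mathcal{A}}\rightarrow\XC$ genuinely annihilates $\mathrm{im}(\gamma)$ and its kernel is a \emph{subbundle} (not merely a subsheaf) of $V_{\mathcal{A}}$. This is automatic because a surjection of locally free sheaves has locally free kernel, so $V_{\mathcal{A}}'$ is a vector bundle of rank $a-1$ and the argument goes through verbatim. No further input is needed; the rest is identical to the proof of Proposition \ref{qq1}.
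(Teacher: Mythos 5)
Your proposal is correct and follows essentially the same argument as the paper: identify $R^1\pi_*\mathcal{L}$ as the cokernel of $\gamma$, use the trivial quotient to produce a surjection $V_{\mathcal{A}}\to\mathcal{O}_{\mathcal{X}}$ whose kernel $K$ is a rank $a-1$ subbundle containing $\mathrm{im}(\gamma)$, and conclude that $\mathcal{M}$ is the zero locus of a section of a rank $a-1$ bundle on $V^*_{\mathcal{B}}$. Your $V_{\mathcal{A}}'$ is the paper's $K$; the rest is identical.
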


\begin{proof}
We must improve the dimension estimate in Proposition \ref{qq1}
by 1.
The long exact sequence obtained from \eqref{vv999} yields a
quotient
$$V_{\mathcal{A}} \rightarrow R^1\pi_*(L) \rightarrow 0$$
on $\mathcal{X}$.  Composing with the given quotient
$$R^1\pi_*(\mathcal{L}) \rightarrow \XC \rightarrow 0$$
yields a quotient
  $$ V_{\mathcal{A}} \rightarrow \XC  \rightarrow 0 \, .$$

Let  $K = \text{Ker}( V_{\mathcal{A}} \rightarrow
\XC)$. Then,
$$ K \rightarrow \mathcal{X}$$
is a vector bundle of rank $a-1$.
Since the image of
$$\gamma: V_{\mathcal{B}}\rightarrow V_{\mathcal{A}}$$
  lies in $K$,
$\mathcal{M}$ is in fact the zero locus of
$$Q \rightarrow q^*(K)$$
on $V^*_{\mathcal{B}}$.
The dimension of every irreducible
component of $\mathcal{M}$ is at least
$$d+b -a+1 = d + \chi(\mathcal{L}) +1$$
by dimension theory as before.
\end{proof}

\subsection{Irreducible components in the boundary}
\label{xxddx}
We consider here the holomorphic case:
 $\mu=(m_1,\ldots,m_{n})$  is  a vector of zero multiplicities
satisfying
$$m_i \geq 0\, , \ \   \ \sum_{i=1}^n m_i =2g-2\ .$$
Let  $Z\subset \widetilde{\H}_g(\mu)$ be an irreducible component
of dimension $2g-3+n$ supported
in the boundary $\partial\overline{\mathcal{M}}_{g,n}$, and let
$$[C,p_1,\ldots,p_n] \in Z$$
be a generic element with associated twist $I$.
The
dual graph $\Gamma_C$ of $C$
must be of the form described in Section \ref{efa} and Lemma \ref{genn}:
$\Gamma_C$ is  a simple star
with a center vertex $v_0$,
edges
connecting
$v_0$ to
outlying vertices $v_1,\ldots, v_k$,
and a distribution of the parts of $\mu$ (with all negative parts distributed
to the center $v_0$).

By Lemma \ref{sqqr}, there are no irreducible components $Z\subset
\widetilde{\H}_g(\mu)$ supported in the boundary with generic
dual graphs with {\em no} outlying vertices.
All loci with such generic dual graphs are in the closure of $\H_g(\mu)$. A
similar result holds in case there is a single outlying vertex.

\begin{proposition} \label{sqqr2}
If all parts of $\mu$ are non-negative,
there are no irreducible components $$Z\subset
\widetilde{\H}_g(\mu)$$ supported in the boundary with generic
dual graph having exactly $1$ outlying vertex.
All such loci are in the closure of $\H_g(\mu)$.
\end{proposition}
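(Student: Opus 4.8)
The plan is to show that a simple star graph $\Gamma_C$ with exactly one outlying vertex cannot carry a twist producing a component of $\widetilde{\H}_g(\mu)$ that is genuinely new, i.e. not in $\overline{\H}_g(\mu)$. Write $v_0$ for the center (genus $g_0$, carrying all parts of $\mu$ other than those assigned to the leg distribution, and in particular all the markings on $v_0$, with the meromorphic differential) and $v_1$ for the single outlying vertex (genus $g_1$), joined by $\ell\geq 1$ edges $q_1,\dots,q_\ell$. Since the star is simple there are no self-edges at $v_0$; the twist $I$ orients every edge out of $v_0$, so on the $v_1$-side the twist values $I(q_j, C_{v_1})=-a_j$ are negative with $a_j\geq 1$, and by the balancing and the definition of a twisted canonical divisor the restriction to $C_{v_1}$ carries a line bundle isomorphic to $\omega_{C_{v_1}}\bigl(\sum_j (a_j+1)q_j\bigr)^{-1}$-type twist — more precisely, $[C_{v_1}, (q_j)]$ is a twisted (in fact honest, since $v_1$ is irreducible-from-its-own-viewpoint only up to its internal nodes, but those are untwisted) canonical divisor for the \emph{holomorphic} vector $(a_1-1,\dots,a_\ell-1)$ on the $v_1$-component, while $[C_{v_0},(p_i),(q_j)]$ realizes a twisted canonical divisor for a strictly meromorphic vector with poles of order $a_j+1\geq 2$ at the $q_j$.

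The key dimension bookkeeping is then the following. The locus in $\overline{\mathcal M}_{g,n}$ of curves of this fixed dual graph has its moduli split as: moduli of $[C_{v_0},(p_i),(q_j)]$, moduli of $[C_{v_1},(q_j)]$, minus the $\ell-1$ dimensions lost to the fact that the $\ell$ nodes are glued (equivalently, $\dim \overline{\mathcal M}_{\Gamma_C} = 3g_0-3+n_0+\ell + 3g_1-3+\ell - \ell = 3g-3+n - 1$ accounting for the single node-edge excess; one tracks this with the genus formula $g-1=(g_0-1)+(g_1-1)+\ell$). On the $v_0$-side, the strictly meromorphic stratum of canonical divisors with the prescribed zeros among the $p_i$ and poles at the $q_j$ has, by the dimension count recalled in Section \ref{zp} (Polishchuk's theorem extended to the meromorphic case), dimension $2g_0-3+n_0+\ell$. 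On the $v_1$-side, the \emph{holomorphic} stratum $\H_{g_1}(a_1-1,\dots,a_\ell-1)$ has dimension $2g_1-2+\ell$, \emph{but} — and this is the crucial point — the canonical bundle on a curve always has $g_1$ sections, so the holomorphic stratum has dimension at least $2g_1-2+\ell$ with equality being the expected value; the actual component containing our point has dimension exactly $2g_1-2+\ell$ by Polishchuk, yet we must subtract one because the section on the $v_1$-side is \emph{not} free: matching the line bundle to $\omega$ is one condition less constraining than a generic section, which is exactly what lets us apply Proposition \ref{qq2} rather than Proposition \ref{qq1}.

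Concretely, I would argue as follows. Consider the versal deformation of the pointed curve $[C_{v_1},(q_j)]$ together with the relative moduli of line bundles restricted to the locus $\cW_1$ where the bundle is canonical; $\dim \cW_1 = 3g_1-3+\ell$ (the relative Jacobian has a $g_1$-dimensional canonical locus inside the $g_1$-dimensional fibers, so imposing ``$=\omega$'' is codimension $g_1$, and $3g_1-3+\ell + g_1 - g_1$). On the $v_1$-curve the bundle $\omega_{C_{v_1}}(\sum(a_j-1)q_j\,)$ — wait, rather the relevant bundle whose sections cut out the divisor — has $R^1\pi_*$ admitting a trivial quotient precisely because $h^1(C_{v_1},\omega_{C_{v_1}})=1$ canonically (Serre duality). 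Hence Proposition \ref{qq2} applies and the space of sections on the $v_1$-side, as the curve and bundle vary over $\cW_1$, has every component of dimension at least $\dim \cW_1 + \chi + 1$, which after imposing the zero multiplicities $(a_j-1)$ at the $q_j$ works out to $2g_1-2+\ell$ — \emph{strictly bigger by $1$} than the naive count that the boundary-component computation in Proposition \ref{dim1} assigned to this vertex when it treated $v_1$ as carrying holomorphic differentials with the generic bound $2g_1-2+\ell$ but then \emph{already} used the extra $-1$ improvement at the edge $e$ on the $v_0$-side. Carefully combining: the component $Z$ would have to have dimension $2g-3+n$, but the product construction above, together with the extra $+1$ from Proposition \ref{qq2} on the $v_1$-side, shows the locus of twisted canonical divisors of this dual-graph-type actually fills out a space of dimension $2g-2+n$, forcing at least one of the $\ell$ nodes (equivalently the edge to $v_1$) to smooth inside $\widetilde{\H}_g(\mu)$; smoothing it lands in $\H_g(\mu)$ since the smoothed curve has fewer components, so $Z\subset \overline{\H}_g(\mu)$, contradicting that $Z$ is a boundary component not in the closure.

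The main obstacle I anticipate is the precise dimension bookkeeping in the presence of $\ell>1$ edges to the single outlying vertex: one must be careful that the ``matching the $\mathbb C^*$-factors in $\mathrm{Pic}$'' subtlety appearing in the proof of Lemma \ref{gcct} (the $h^1(\Gamma_I)$ extra factors) does not eat the dimension gained from Proposition \ref{qq2}. The clean way to handle this is to phrase everything relative to the base $\cW$ of Proposition \ref{dim2} — the closure of the canonical locus in the relative Jacobian over the full versal deformation — rather than as a naive product over the two vertices, and to note that smoothing the $v_0$–$v_1$ node is unobstructed on the line-bundle side by Lemma \ref{gcct}, so the only thing to check is that the divisor (section) also deforms, which is exactly where the $+1$ of Proposition \ref{qq2}, applied using the canonical cosection $H^1(\omega)\cong\mathbb C$, provides the necessary extra parameter. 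Once that is set up, the inequality $\dim(\text{this boundary locus}) \geq 2g-2+n > 2g-3+n = \dim Z$ is forced, and the proposition follows.
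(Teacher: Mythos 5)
Your proposal correctly identifies the two essential ingredients of the paper's proof: Proposition \ref{qq2} with its reduced dimension estimate, and the existence of a trivial quotient of $R^1\pi_*\mathcal{L}$ arising from Serre duality, made possible because the outlying vertex $v_1$ carries a \emph{holomorphic} differential. That recognition is genuinely the heart of the matter. However, the execution contains a real gap that you yourself sense but do not resolve.

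The central problem is that a cosection of $H^1(C_{v_1},\omega_{C_{v_1}})\cong\mathbb{C}$ lives only on the special fiber's $v_1$-component, whereas Proposition \ref{qq2} requires a trivial quotient of $R^1\pi_*\mathcal{L}$ over an \emph{entire family} that includes smoothings of the nodes joining $v_0$ to $v_1$. Your product-over-vertices decomposition (moduli of $[C_{v_0},\dots]$ $\times$ moduli of $[C_{v_1},\dots]$) parameterizes only curves with the fixed dual graph; applying Proposition \ref{qq2} there cannot show that any node smooths, and in fact cannot give any more than Polishchuk's theorem already gives (the holomorphic stratum at $v_1$ has dimension \emph{exactly} $2g(v_1)-2+\n(v_1)$, so the ``+1'' is already consumed by that count — it is not an extra parameter). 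You recognize this and suggest pivoting to the base $\mathcal{W}$ of Proposition \ref{dim2}, but then you invoke ``the canonical cosection $H^1(\omega)\cong\mathbb{C}$'' without explaining how that cosection extends across a family in which the bundle on the central fiber is the \emph{twisted} canonical bundle (not $\omega$) and, crucially, across the smoothing directions. The paper's actual proof supplies exactly this missing construction: it blows up the versal deformation of the unpointed curve along the locus preserving the $v_0$--$v_1$ nodes, performs a base change to create $A_{r-1}$-singularities with crepant resolution, and then explicitly defines $\mathcal{L}=\omega_{\widetilde{\pi}}\big(\sum_D\widehat\gamma_D[D]\big)$ with the sign pattern $\widehat\gamma_D\le 0$ for all $D$ and $\widehat\gamma_{v_1}=0$. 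That sign pattern — which can only be arranged when there is exactly \emph{one} outlying vertex, since with two outlying vertices of different $\gamma$-values you cannot make all twists nonpositive simultaneously while keeping one equal to zero — is what produces a nowhere-vanishing section of $\mathcal{O}\big(\sum_D -\widehat\gamma_D[D]\big)$ and hence a trivial quotient of $R^1\widetilde{\pi}_*\mathcal{L}$ over the whole open set $\mathcal{U}\subset\mathcal{V}_1$. Without this construction, the appeal to Proposition \ref{qq2} is not available on the family where it is needed, and the argument does not close. In short: right lemma, right duality, but the family and the line bundle over it — where the cosection must actually live, and which explain why ``exactly one outlying vertex'' is the correct hypothesis — are missing.
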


\begin{proof}
As in the proof of Proposition \ref{dim2}, we will study the versal deformation space
of $C$ where
$$[C,p_1,\ldots,p_n] \in Z$$
is a generic element. However, a more delicate argument is required here.
We assume the generic twist $I$ is nontrivial. If the twist $I$ is
trivial, then $[C,p_1,\ldots,p_n]$ is a usual canonical
divisor and the proof of Proposition \ref{dim2} directly applies.
The nodes of $C$ corresponding to self-edges of $v_0$
cannot be twisted by $I$.

We assume{\footnote{In case $g=0$
vertices occur, we can rigidify with $q$ markings as in
the proof of Proposition \ref{dim2}. We leave the routine
modification to the reader.}}  the vertices $v_0$ and $v_1$ both
have genus $g_i>0$.
Let $\mathcal{V}$ be the versal deformation space of
the unpointed nodal curve $C$,
$$\text{dim}\, \mathcal{V} = 3g-3\ . $$
let $e_1$ be the number of edges connecting $v_0$ and $v_1$.
The subvariety $\mathcal{S}\subset \mathcal{V}$
of curves preserving the nodes of $C$ corresponding to the
edges connecting $v_0$ to $v_1$ is
of codimension $e_1$.

We blow-up $\mathcal{V}$ in
the subvariety $\mathcal{S}$,
$$\nu:\mathcal{V}_1 \rightarrow \mathcal{V}\, .$$
The fiber of $\mathcal{V}_1$ above the original point
$[C]\in \mathcal{V}$ is
$$\nu^{-1}([C])=\mathbb{P}^{e_1-1} \, .$$

Via pull-back, we have a universal family of curves
$$\pi: \mathcal{C} \rightarrow \mathcal{V}_1\, . $$
The blow-up yields a nonsingular exceptional divisor
$$E_1\subset \mathcal{V}_1\, $$
which contains
$\nu^{-1}([C])$.
By locally base-changing $\mathcal{V}_1$ via the $r^{th}$-power of the
equation of $E_1$, we can introduce transverse $A_{r-1}$-singularities
along the nodes of the fibers of $\mathcal{C}$ corresponding
to the edges $e_1$. The crepant resolution introduces
a chain of $r-1$ rational curves.

The outcome after such a base change is a family
$$\widetilde{\pi}:\widetilde{C} \rightarrow {\mathcal{V}}_1$$
with the following properties over the generic point
$\xi\in \nu^{-1}([C])$:
\begin{enumerate}
\item[(i)] $\widetilde{C}_\xi$ is a semistable curve obtained from $C$
by inserting chains of $r-1$ rational components at the nodes,
\item[(ii)] the total space of $\widetilde{C}$ is nonsingular near $\widetilde{C}_\xi$,
\item[(iii)] the components of $\widetilde{C}_\xi$ correspond to
{\em divisors} in the total space of $\widetilde{C}$.
\end{enumerate}

After selecting $r$ appropriately, we can twist
the total space $\widetilde{C}$ as specified by $I$.
Such a geometric twisting is possible because of condition (iii)
above.

We follow the construction of Section \ref{smm}.
There, a non-negative twist $\gamma_D$ is associated
to each component $D \subset \widetilde{C}_\xi$. More precisely,
the construction assigns
$$\gamma_{v_0}=0\, ,\ \ \ \ \ \gamma_{v_1}= \prod_{i=1}^{e_1} I(q_i,v_0) >0 \ $$
and strictly positive twists less than $\gamma_{v_1}$
for the additional rational components.
By twisting by the divisor $-\gamma_{v_1}\cdot E_1$ near $\xi \in \mathcal{V}_1$,
 we can change the assignments to
\begin{equation}\label{zzz222}
\widehat{\gamma}_{v_0}= -\prod_{i=1}^{e_1} I(q_i,v_0) <0\, , \ \ \ \ \
\widehat{\gamma}_{v_1}=0 \
\end{equation}
and strictly negative twists greater than $\widehat{\gamma}_{v_0}$
for the additional rational components.

Using the twists \eqref{zzz222}, we obtain a line bundle
\begin{equation}\label{x443}
\mathcal{L} =\omega_{\widetilde{\pi}}\left( \sum_{D} \widehat{\gamma}_D \cdot[D] \right)
\end{equation}
where the sum is over the irreducible components of
  $\widetilde{C}_\xi$ for generic $\xi \in \nu^{-1}([C])$.
The line bundle \eqref{x443}
is defined over an open set
$$\mathcal{U} \subset \mathcal{V}_1$$
which contains the generic element $\xi\in \nu^{-1}([C])$.

On the fibers of $\widetilde{\pi}$ over $\mathcal{U}$,
$$H^1\Big(\omega_{\widetilde{\pi}}\Big( \sum_{D} \widehat{\gamma}_D \cdot[D] \Big)
\Big)
\stackrel{\sim}{=}
H^0\Big(\mathcal{O}\Big( \sum_{D} -\widehat{\gamma}_D \cdot[D] \Big)\Big)^{\vee}$$
by Serre duality.
Since $-\widehat{\gamma}_D\geq 0$, we obtain
a canonical section
$$\mathbb{C} \rightarrow H^0\Big(\mathcal{O}\Big( \sum_{D} -\widehat{\gamma}_D \cdot[D] \Big)\Big)^{\vee}\, .$$
Since $-\widehat{\gamma}_{v_1} = 0$, the
canonical section does not vanish identically
 on {\em any} fiber of $\widetilde{\pi}$
over $\mathcal{U}$.
Hence, we obtain a canonical quotient
$$H^1\Big(\omega_{\widetilde{\pi}}\Big( \sum_{D} \widehat{\gamma}_D \cdot[D] \Big)\Big)
\rightarrow \mathbb{C} \rightarrow 0\ .$$

We now can estimate the dimension of the space of good sections
of $\mathcal{L}$ on the fibers of $\widetilde{\pi}$ over $\mathcal{U}$
near the generic element of $\nu^{-1}([C])$
 using Proposition \ref{qq2} of Section \ref{qq}.
The dimension of the space of sections
is at least
$$\text{dim}\, \mathcal{U} + \chi(C,L)+1\, =\,  3g-3+g-1+1 \, =
\,  4g-3\, .$$
Hence, dimension of the space $\mathcal{T}$ of twisted
canonical divisors on the fibers of $\widetilde{\pi}$
over $\mathcal{U}$ corresponding to twisted canonical line bundle $\mathcal{L}$
near the generic element of  $\nu^{-1}([C])$ is at least $4g-4$.

We must further impose conditions on the twisted
canonical divisors parameterized by $\mathcal{T}$
to obtain the shape $\sum_{i=1}^n m_i p_i$.
These conditions impose at most
$$2g-2-n$$
constraints. Hence, we conclude the dimension of
space $\mathcal{T}(\mu)$ of twisted
canonical divisors on the fibers of
\begin{equation*}
\widetilde{\pi}: \widetilde{\mathcal{C}} \rightarrow \mathcal{U}\,
\end{equation*}
near $[C, p_1, \ldots, p_n]$ of shape $\mu$ is {\em at least}
 $$4g-4 - (2g-2 -n)= 2g-2+n\, .$$
The result contradicts the dimension $2g-3+n$ of $Z$.
\end{proof}

\vspace{10pt}

By Lemma \ref{sqqr} and Proposition \ref{sqqr2}
in the holomorphic case, the loci
$$Z\subset \widetilde{\H}_g(\mu)$$
corresponding to star graphs with 0 or 1 outlying vertices
are contained in the closure of $\H_g(\mu)$.
However, there are virtual components of $\widetilde{\H}_g(\mu)$
with more outlying vertices.

A simple example occurs in genus $g=2$ with $\mu=(1,1)$.
A nonsingular pointed curve $$[C,p_1,p_2] \in \mathcal{M}_{2,2}$$
lies in $\H_2(1,1)$ if and only if
$\{p_1,p_2\}\subset C$ is a fiber of
the unique degree 2 map
$$ C \rightarrow \mathbb{P}^1$$
determined by the canonical series.
An easy argument using admissible covers shows that
the generic element of the virtual component corresponding
to the star graph below is not in the closure of $\H_2(1,1)$.

\begin{centering}{\hspace{50pt} \includegraphics[scale=0.5]{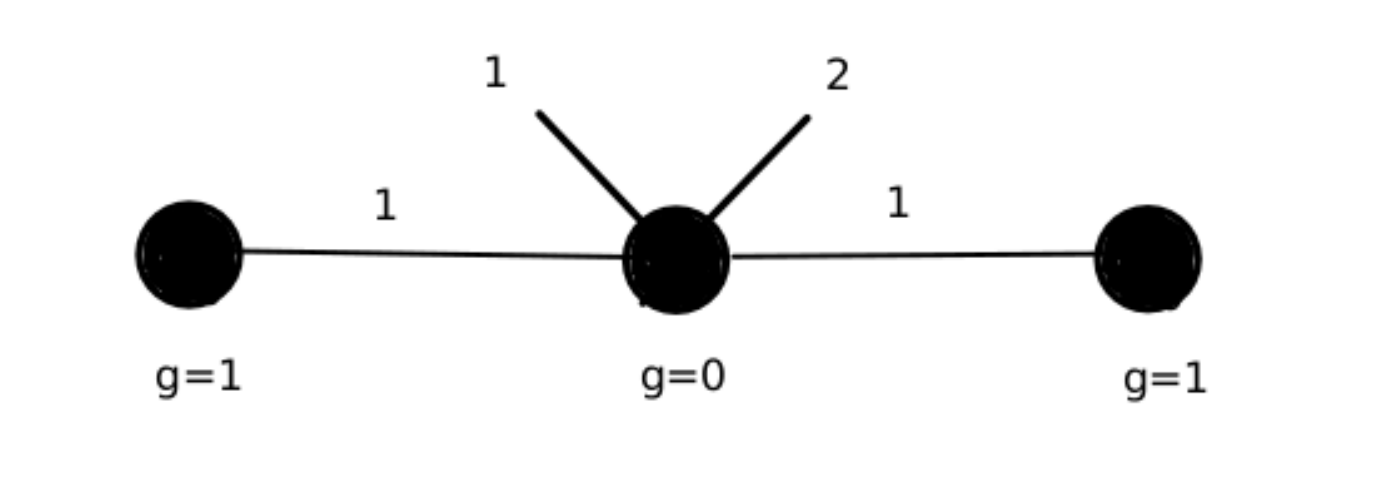}}
\end{centering}

\noindent Both parts of $\mu$ are distributed to the center.
The genera of the vertices and the twists of the edges are
specified in the diagram.
The above example (which occurs even in compact type)
was discussed earlier by Chen \cite{Ch}.

\section{Twisted canonical divisors and limits of theta characteristics} \label{tthh}

\subsection{Theta characteristics}
We illustrate the results of the paper (particularly Theorem \ref{main1}
 and Proposition \ref{sqqr2})
in two much studied cases linked to the classical theory of
theta characteristics.  Assume $\mu=(m_1, \ldots, m_n)$
consists of {\em even} non-negative parts,
$$m_i=2a_i\, , \ \ \ \ \ \ \ \ \ \sum_{i=1}^n a_i =g-1\, .$$
 We write $\mu=2\alpha$ where $\alpha\vdash g-1$ .

If $[C, p_1, \ldots, p_n]\in \H_g(2\alpha)$ is a canonical divisor, then
$$\eta\w= \OO_C(a_1 p_1+\cdots +a_n p_n)$$
is a theta characteristic.
The study of the closure $\overline{\H}_g(2\alpha)$  is therefore
intimately related to the geometry of the moduli space $\ss_g$ of spin curves of genus $g$, see \cite{C}.

Two cases are of particular interest. If $\mu=(2g-2)$ is of length one,
$\H_g(2g-2)$ is the locus of \emph{subcanonical} pointed curves:
 $$[C,p]\in \cM_{g,1} \ \ \ \Leftrightarrow \ \ \
\omega_C \ \stackrel{\sim}{=}\  \mathcal{O}_C((2g-2)p)\, .$$
Subcanonical points are extremal Weiertrass points on the curve.
 For $g\geq 4$,
Kontsevich and Zorich \cite{KZ} have shown that the space $\H_g(2g-2)$ has three connected components (of dimension $2g-1$):
\begin{itemize}
\item the locus $\H_g(2g-2)^{+}$ of curves $[C,p]$ for which
 $\OO_C((g-1)p)$ is an even theta characteristic,
\item the locus $\H_g(2g-2)^{-}$ of curves $[C,p]$ for which $\OO_C((g-1)p)$ is an odd theta characteristic,
\item the locus $\H_g(2g-2)^{\mathrm{hyp}}$ of curves $[C,p]$ where $C$ is hyperelliptic and $p\in C$ is a Weierstrass point.
\end{itemize}
For $g=3$, the first and the third coincide
$$\H_3(4)^{+}=\H_3(4)^{\mathrm{hyp}}\, ,$$
 thus $\H_3(4)$ has only two connected components.
For $g=2$, the space $\H_2(2)$ is irreducible. The geometry of the
compactifications of these loci in small genera has been  studied in \cite{ChT}.

The second case we consider is $\mu=(2, \ldots, 2)$ of length $g-1$.
The space $\H_g(\underline{2})=\H_g(2, \ldots, 2)$ splits into two connected components:

\begin{itemize}
\item the locus $\H_g(\underline{2})^-$ of curves
$[C, p_1, \ldots, p_{g-1}]\in \cM_{g,g-1}$  for which
 $$\eta\w=\OO_C(p_1+\cdots+p_{g-1})$$ is an odd theta characteristic,
\item the locus $\H_g(\underline{2})^+$  of curves
$[C, p_1, \ldots, p_{g-1}]\in \cM_{g,g-1}$ for which
$$\eta\w=\OO_{C}(p_1+\cdots +p_{g-1})$$ is a vanishing theta-null (even and
 $h^0(C, \eta)\geq 2$).
\end{itemize}
The component $\H_g(\underline{2})^-$ is a generically finite cover of $\cM_g$.
For example, $\H_3(\underline{2})^-$ is birationally isomorphic to the canonical double cover over the moduli space of bitangents of nonsingular quartic curves. On the other hand,
 the
component $\H_g(\underline{2})^+$ maps with 1 dimensional fibres over the divisor $\Theta_{\mathrm{null}}$ of curves with a vanishing theta-null, see \cite{F}.

\subsection{Spin curves}
We require a few basic definitions concerning spin curves.
A connected nodal curve $X$ is \emph{quasi-stable} if, for every
 component
$$E\stackrel{\sim}{=} {\mathbb{P}}^1\subset X\, , $$
 the following two properties are satisfied:
\begin{enumerate}
\item[$\bullet$]
 $k_E=|E\cap \overline{(X-E)}|\geq 2$,
\item[$\bullet$] rational components $E, E'\subset X$ with $k_E=k_{E'}= 2$ are always disjoint.
\end{enumerate}
The irreducible components $E\subset X$ with $k_E=2$ are \emph{exceptional}.

\begin{definition}[Cornalba \cite{C}] \label{stspin}
 A stable \emph{spin curve} of genus
$g$ consists of a  triple $(X, \eta, \beta)$ where
\begin{enumerate}
\item[(i)]
 $X$ is a genus
$g$ quasi-stable curve,
\item[(ii)]
$\eta\in \mathrm{Pic}^{g-1}(X)$ is a line
bundle of total degree $g-1$ with $\eta_{E}=\OO_E(1)$ for all
exceptional components $E\subset X$,
\item[(iii)] $\beta:\eta^{\otimes
2}\rightarrow \omega_X$ is a homomorphism of sheaves which is generically
nonzero along each nonexceptional component of $X$.
\end{enumerate}
\end{definition}

If  $(X, \eta, \beta)$ is a spin curve with exceptional components
$E_1, \ldots, E_r\subset X$, then $\beta_{E_i}=0$ for $i=1, \ldots, r$
by degree considerations. Furthermore,  if
$$\widetilde{X}=\overline{X-\bigcup_{i=1}^r E_i}$$ is viewed as a subcurve
of $X$, then we have an isomorphism
$$
\eta^{\otimes 2}_{\widetilde{X}}\stackrel{\sim}\longrightarrow
\omega_{\widetilde{X}}.
$$

The moduli space $\ss_g$ of stable spin curves of genus $g$ has been constructed by Cornalba \cite{C}.
There is a proper morphism $$\pi:\ss_g\rightarrow \mm_g$$ associating to a spin curve $[X, \eta, \beta]$ the curve obtained from $X$ by contracting all exceptional components.
The parity $h^0(X, \eta) \mbox{ mod 2}$ of a spin curve is invariant under deformations.
The moduli space
 $\ss_g$  splits into two connected components $\ss_g^+$ and $\ss_g^-$ of relative degree $2^{g-1}(2^g+1)$ and
$2^{g-1}(2^g-1)$ over $\overline{\mathcal{M}}_g$ respectively.  The birational geometry of $\ss_g$ has been studied in \cite{F}.

\subsection{Twisted canonical divisors on 2-component curves.}
We will describe the limits of supports of theta characteristics  in
case the underlying curve
$$C=C_1 \cup C_2$$ is a union of two nonsingular curves $C_1$ and $C_2$ of
genera $i$ and $g-i-\ell+1$  meeting transversally in a set of $\ell$ distinct points
$$\Delta=\{x_1, \ldots, x_{\ell}\} \subset C\, .$$
Let  $2a_1 p_1+\cdots +2a_n p_n$ be a twisted canonical divisor on $C$,
$$[C,p_1,\ldots,p_n] \in \widetilde{\H}_g(2\alpha)\, .$$
By definition, there exist twists
$$I(x_j, C_1)=-I(x_j, C_2)\, , \ \ \ \  j=1, \ldots, \ell\, $$
 for which the following linear equivalences on $C_1$ and $C_2$ hold:
$$\omega_{C_1}\equiv \sum_{p_i\in C_1} 2a_i p_i-\sum_{j=1}^{\ell} \Bigl(I(x_j,C_1)+1\Bigr) x_j\  \mbox{ and } \ \omega_{C_2}\equiv \sum_{p_i\in C_2} 2a_i p_i-\sum_{j=1}^{\ell} \Bigl(I(x_j,C_2)+1\Bigr) x_j.$$
By Proposition \ref{sqqr2}, all twisted holomorphic differentials on $C$
are smoothable,
  $$[C,p_1, \ldots, p_n]\in \hh_g(\mu)\, .$$

Following \cite{C}, we describe all spin structures having $C$ as underlying stable curve.
If $[X, \eta, \beta] \in \pi^{-1}([C])$, then the number of nodes of $C$ where no exceptional component is inserted must be \emph{even}. Hence,
$X$ is obtained from $C$ by {\em blowing-up} $\ell-2h$ nodes.{\footnote{The
term {\em blowing-up} here just means inserting an exceptional $E\stackrel{\sim}{=}\mathbb{P}^1$.}}
Let  $\{x_1, \ldots, x_{2h}\}\subset C$
be the non blown-up nodes. We have
 $$X=C_1\cup C_2 \cup E_{2h+1}\cup \ldots \cup E_{\ell}\, ,$$
where $C_1\cap E_i=\{x'_i\}$ and $C_2\cap E_i=\{x''_i\}$ for $i=2h+1, \ldots, \ell$,
see Figure 2.
Furthermore,
$$\eta_{E_i}\stackrel{\sim}{=}\OO_{E_i}(1)\, , \ \ \eta_{C_1}\in \mbox{Pic}^{i-1+h}(C_1)\, , \ \
\eta_{C_2}\in \mbox{Pic}^{g-i-\ell+h}(C_2)$$ with the latter  satisfying
$$\eta_{C_1}^{\otimes 2}\w=\omega_{C_1}(x_1+\cdots+x_{2h})\ \ \mbox{ and }\ \ \eta_{C_2}^{\otimes 2}\w=\omega_{C_2}(x_1+\cdots+x_{2h}).$$

\begin{figure}[h]
 \centering
  \includegraphics[width=4cm, height=5cm]{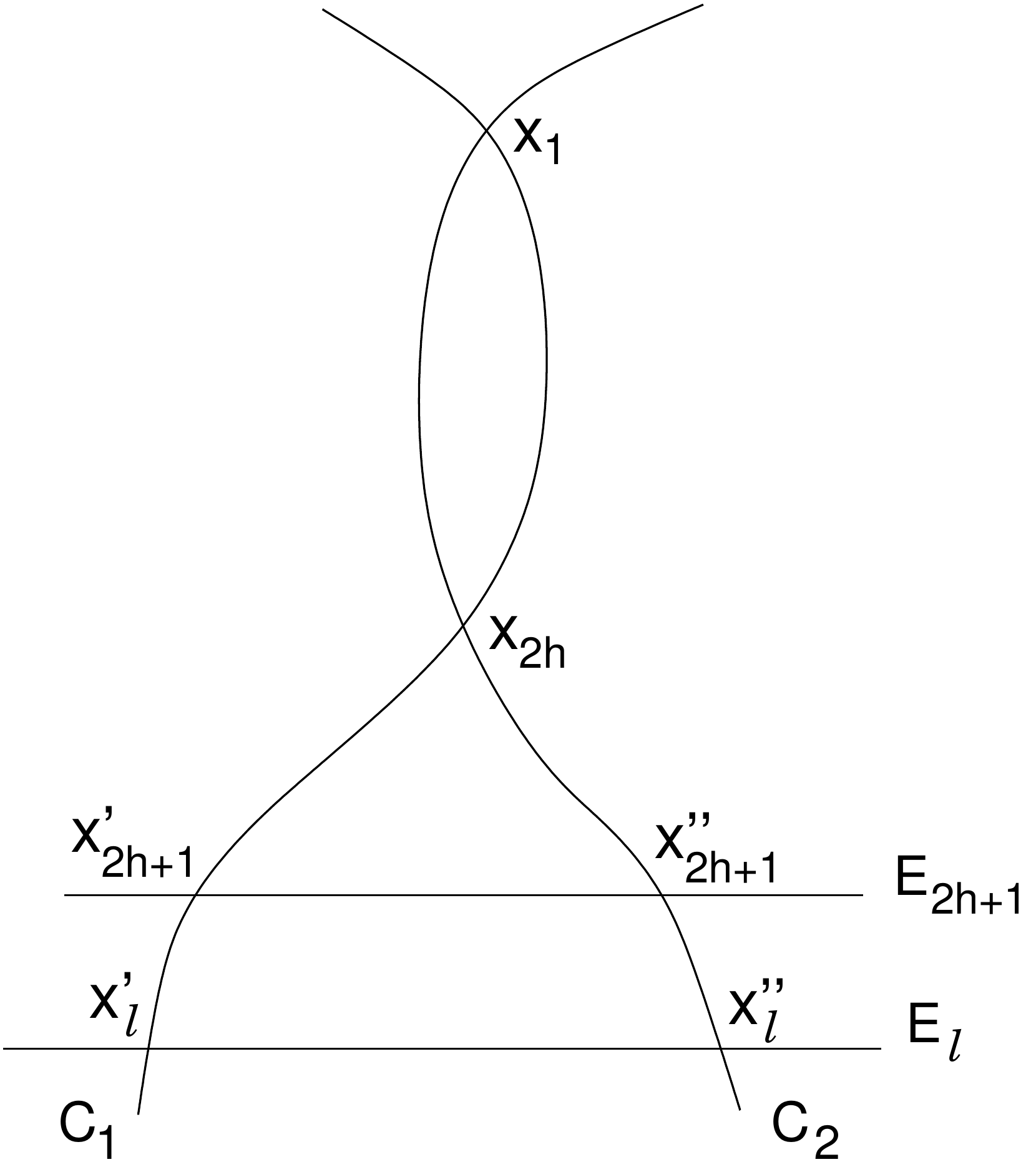}
  \caption{The quasi-stable curve $X$}
\end{figure}

There is a precise correspondence between
the spin structures of Cornalba \cite{C} and
 the system of twists associated to
a point in $\widetilde{\H}_g(2\alpha)$.

\begin{proposition}\label{bananas}
Let $C=C_1 \cup C_2$ as above.
There exists a spin structure in $\ss_g$ corresponding to $[C,p_1,\ldots,p_n] \in
\widetilde{\H}_g(2\alpha)$ with twist $I$, which 
is obtained by blowing-up $C$ at all the nodes $x_j$ where the twist
$$I(x_j, C_1)=-I(x_j,C_2)$$ is odd.
\end{proposition}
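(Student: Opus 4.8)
The plan is to set up an explicit dictionary between the twist data $I$ on $C=C_1\cup C_2$ and Cornalba's blow-up construction of a spin structure on a quasi-stable curve $X$ having $C$ as stabilization, and then check the three defining conditions of Definition \ref{stspin}. First I would fix a point $[C,p_1,\ldots,p_n]\in\widetilde{\H}_g(2\alpha)$ with twist $I$, write $a_i$ for the prescribed orders so $\mu=2\alpha$, and sort the nodes $x_1,\ldots,x_\ell$ of $C$ into those where $I(x_j,C_1)$ is even and those where it is odd. The construction is: blow up $C$ exactly at the odd nodes, inserting an exceptional $\mathbb{P}^1=E_j$ there, and on the resulting quasi-stable curve $X$ define $\eta$ componentwise by $\eta_{E_j}=\mathcal{O}_{E_j}(1)$, and on $C_1$, $C_2$ by a "halved twist" formula coming from the linear equivalences recorded just before the Proposition. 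Concretely, on $C_1$ the twisted-canonical condition gives $\omega_{C_1}\equiv\sum_{p_i\in C_1}2a_ip_i-\sum_j(I(x_j,C_1)+1)x_j$; I want to take a square root of this, so I would set
$$\eta_{C_1}\ \w=\ \mathcal{O}_{C_1}\Big(\sum_{p_i\in C_1}a_ip_i-\sum_{j\in\mathrm{odd}}\tfrac{I(x_j,C_1)+1}{2}x_j-\sum_{j\in\mathrm{even}}\tfrac{I(x_j,C_1)}{2}x_j\Big)$$
(and symmetrically on $C_2$), where the point of the even/odd split is that $I(x_j,C_1)+1$ is even precisely on the odd nodes, so the exponents above are all integers — this is exactly where the parity dictates where a $\mathbb{P}^1$ must be inserted.

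The key steps, in order, are: (1) verify $\eta\in\mathrm{Pic}^{g-1}(X)$ has total degree $g-1$ — this is a bookkeeping check summing the degrees on $C_1$, $C_2$ and the $\mathbb{P}^1$'s, using $\sum a_i=g-1$ and the balancing condition $I(x_j,C_1)=-I(x_j,C_2)$ so the half-twist contributions on the two sides cancel in pairs; (2) verify $\eta^{\otimes2}$ restricted to $C_1$ (resp. $C_2$) equals $\omega_{C_1}(\sum_{j\in\mathrm{odd}}x_j)$ (resp. $\omega_{C_2}(\sum_{j\in\mathrm{odd}}x_j)$), which is precisely the relation Cornalba's construction requires between $\eta_{C_i}^{\otimes2}$ and $\omega_{C_i}$ twisted at the non-blown-up... wait — at the blown-up nodes; I would match this against the displayed formulas $\eta_{C_1}^{\otimes2}\w=\omega_{C_1}(x_1+\cdots+x_{2h})$ in the text, with the $2h$ non-blown-up nodes there being exactly my even nodes (so $2h=\#\mathrm{even}$, forcing that number to be even — this needs a remark); (3) define the map $\beta:\eta^{\otimes2}\to\omega_X$ to be this isomorphism on $\widetilde X=C_1\cup C_2$ and the zero map on each $E_j$, and check it is a genuine morphism of sheaves on all of $X$ (continuity at the nodes $x_j'$, $x_j''$ where $E_j$ meets $\widetilde X$: on the $E_j$ side $\beta$ vanishes, which is compatible since $\eta^{\otimes2}_{E_j}=\mathcal{O}(2)$ while $\omega_X|_{E_j}=\mathcal{O}(0)$ after the blow-up, so any map is zero there); (4) check it is generically nonzero on every nonexceptional component, which is immediate since it is an isomorphism on $\widetilde X$; (5) conversely, check that this assignment is well-defined (independent of choices of square roots up to the finitely many $2$-torsion ambiguities, which is fine since we only claim existence of a corresponding spin structure), and that its image under $\pi:\ss_g\to\mm_g$ is $[C]$, since contracting the $E_j$ returns $C$.

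The main obstacle I expect is step (2) combined with the parity/counting issue in step (1)–(2): I must show that the number of nodes where $I(x_j,C_1)$ is \emph{even} is itself even, so that $\omega_{C_1}(\sum_{\mathrm{even\ wait,\ odd}} x_j)$ — i.e. the residual twist at the blown-up nodes — is actually a square in $\mathrm{Pic}(C_1)$ and matches the pattern $\omega_{C_1}(x_1+\cdots+x_{2h})$ with $x_1,\ldots,x_{2h}$ the non-blown-up nodes. This parity constraint should follow from a degree/parity argument: $\deg\omega_{C_1}=2i-2$ is even, $\sum_{p_i\in C_1}2a_ip_i$ has even degree, so $\sum_j(I(x_j,C_1)+1)$ is even, i.e. $\#\{j: I(x_j,C_1)\text{ even}\}\equiv\ell\pmod2$ — and then reconciling this with Cornalba's requirement that the number of \emph{non}-blown-up nodes be even (which is what makes the square root exist) is the delicate point. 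I would handle it by observing that $\eta_{C_1}$ as I have written it is integral and that $2\eta_{C_1}$ differs from $\omega_{C_1}$ by exactly the divisor of the non-blown-up (even) nodes, and then invoke the standard fact (as in \cite{C}) that this forces their number to be even; alternatively one reverses the logic and takes the evenness of the count as part of what the twist conditions \emph{guarantee}. Once that parity bookkeeping is pinned down, everything else is the routine verification of Cornalba's three axioms, and the correspondence drops out.
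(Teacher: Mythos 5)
Your approach is correct but genuinely different from the paper's. The paper first invokes Proposition \ref{sqqr2} to conclude $[C,p_1,\ldots,p_n]\in\hh_g(2\alpha)$, uses properness of $\ss_g\to\mm_g$ to extend a family of theta characteristics on nearby smooth curves to a family of spin curves over a disc, and then runs the degeneration machinery of Section \ref{gentw} on that family to extract integral twists $\tau_j$ (at non-blown-up nodes) and $\tau_j',\tau_j''$ (on the blown-up sides); squaring the resulting relations gives $I(x_j,C_1)=-2\tau_j$ at the non-blown-up nodes and $I(x_j,C_1)=-2\tau_j'-1$ at the blown-up ones, which forces the parity dichotomy. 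You instead \emph{construct} the spin structure directly: you blow up at the odd nodes, define $\eta$ componentwise by the explicit halving formula, and check Cornalba's three axioms. The two arguments are essentially inverse to each other — the paper derives the spin twist $\tau$ from a degeneration, you directly posit $\tau_j=-I(x_j,C_1)/2$ at even nodes and $\tau_j'=-(I(x_j,C_1)+1)/2$ at odd ones. What the paper's route buys is control over \emph{all} spin limits arising from smoothings, not just existence of one; what your route buys is self-containedness — you do not need the smoothability input from Proposition \ref{sqqr2} at all, only the twist equations.

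One correction to your parity worry: the step you flag as "the delicate point" is in fact automatic, and you slightly miscomputed the congruence. Since $\deg\omega_{C_1}$ and $\sum_{p_i\in C_1}2a_i$ are even, $\sum_j\bigl(I(x_j,C_1)+1\bigr)$ is even; the summand $I(x_j,C_1)+1$ is odd precisely when $I(x_j,C_1)$ is even, so the number of nodes with \emph{even} twist is even (not merely $\equiv\ell\pmod 2$). These are exactly the non-blown-up nodes, so Cornalba's requirement that their number be even is satisfied immediately, and the square root $\eta_{C_1}$ with $\eta_{C_1}^{\otimes2}\w=\omega_{C_1}(x_1+\cdots+x_{2h})$ exists with no further reconciliation needed.
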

\begin{proof}
Since $[C, p_1, \ldots, p_n]\in \hh_g(2\alpha)$ by Proposition \ref{sqqr2},
there exists an $n$-pointed family of stable spin curves  over a  disc
$$\mathcal{X}\rightarrow \Delta\, , \ \ \ \eta\in \mbox{Pic}(\mathcal{X})\, , \ \ \ p_1, \ldots, p_n:\Delta\rightarrow \mathcal{X},$$ satisfying
\begin{enumerate}
\item[$\bullet$]
 for $t\in \Delta^\star$, the fibre $X_t$ is nonsingular
and $$\eta_{X_t}\stackrel{\sim}{=}\OO_{X_t}\bigr(a_1 p_1(t)+\cdots+a_n p_n(t)\bigl)\, ,$$
\item[$\bullet$]  the central fibre $X_0$ is obtained from $C$ by inserting  nonsingular
 rational components $E_{2h+1}, \ldots, E_{\ell}$ at some of the nodes of $C$
which we assume to be $x_{2h+1}, \ldots, x_{\ell}$.
\item[$\bullet$]
$\eta_{E_j}\stackrel{\sim}{=}\OO_{E_j}(1)$  for $j=2h+1, \ldots, \ell$ .
\end{enumerate}

 By carrying out an argument following Section \ref{gentw},
after a finite base change and after resolution of singularities,
we arrive at a new family
$$\widetilde{\pi}:\widetilde{\mathcal{X}}\rightarrow \Delta\, ,$$
 with central fibre
$$\mu:\widetilde{\pi}^{-1}(0)\rightarrow X_0$$
obtained from $X_0$ by inserting chains of nonsingular rational curves at the nodes of $X_0$
for which
the line bundle $$\mu^*(\eta_X)(-a_1 p_1-\cdots -a_n p_n)$$
 is the restriction to the central fibre of a line bundle on $\widetilde{\mathcal{X}}$ supported {\em only}
on the irreducible components of $\widetilde{\pi}^{-1}(0)$.

Just as in Section \ref{gentw},
we then obtain integral twists
$$\tau_j=J(x_j,C_1)=-J(x_j, C_2)$$
 for $j=1, \ldots, 2h$ and
$$\tau_{j}'=J(x_j', C_1)=-J(x_j', E_j) \ \ \ \text{and}\ \ \
\tau_{j}''=J(x_j'',E_j)=-J(x_j'',C_2)$$
 for $j=2h+1,\ldots, \ell$, for which
 the restrictions of $\eta_X\bigl(-\sum_{i=1}^n a_i p_i\bigr)$ to the irreducible
components of $X_0$
are isomorphic to the bundles given by the twists $J$ at the incident nodes. In particular,
$$\eta_{E_j}\w=\OO_{E_j}(1)=\OO_{E_j}(-\tau_j'+\tau_j''),$$
hence $\tau_j''=\tau_j'+1$ for $j=2h+1,\ldots, \ell$. Furthermore, we have
$$\eta_{C_1}\Bigl(-\sum_{p_i\in C_1} a_i p_i\Bigr)\w=\OO_{C_1}\Bigl(\tau_1 x_1+\cdots+\tau_{2h} x_{2h}+\tau_{2h+1}' x'_{2h+1}+\cdots +\tau_{\ell}' x_{\ell}'\Bigr)\, ,$$
$$\eta_{C_2}\Bigl(-\sum_{p_i\in C_2} a_i p_i\Bigr)\w=\OO_{C_2}\Bigl(-\tau_1 x_1-\cdots-\tau_{2h} x_{2h}-\tau_{2h+1}'' x_{2h+1}''-\cdots -\tau_{\ell}'' x_{\ell}''\Bigr).$$
By squaring these relations and comparing to the original $I$ twists,
we conclude
$$I(x_j, C_1)=-2\tau_j$$
for $j=1, \ldots, 2h$ and
$$I(x_j,C_1)=-2\tau_j'-1$$ for $j=2h+1, \ldots, \ell$ respectively.
\end{proof}

\subsection{Subcanonical points}
\subsubsection{Genus $3$.} We illustrate Proposition \ref{bananas} in
case{\footnote{The case has also been treated by D. Chen in
\cite[Section 7.2]{Ch}.}
$g=3$ where  $$\H_3(4)=\H_3(4)^{-} \sqcup  \H_3(4)^{\mathrm{hyp}}\, .$$  Assume
$C_1$ and $C_2$ are both nonsingular elliptic curves and $p\in C_1\setminus\{x_1, x_2\}$.
Suppose
$$[C_1\cup C_2, p]\in \hh_{3}(4)\, .$$
The only possible twists are
$$I(x_1, C_1)=I(x_2, C_1)=1\, .$$
The curve $[C_1\cup C_2, p]\in \mm_{3,1}$ lies in $\hh_3(4)$ if and only if the
linear equivalence
\begin{equation}\label{gen3}
4p\equiv 2x_1+2x_2
\end{equation}
holds on $C_1$.

The associated spin structure $[X, \eta]$ is obtained by inserting at both  $x_1$ and $x_2$ rational components
 $E_1$ and $E_2$,
$$X=C_1\cup C_2\cup E_1\cup E_2$$
with the intersections
 $$C_1\cap E_1=\{x_1'\}\, , \ \ C_1\cap E_2=\{x_2'\}\, , \ \  C_2\cap E_1=\{x_1''\}\, ,\ \
 C_2\cap E_2=\{x_2''\}\, .$$
 The line bundle $\eta$ satisfies
$$\eta_{E_i}\w=\OO_{E_i}(1)\, , \ \  \eta_{C_1}\w=\OO_{C_1}(2p-x_1-x_2)\, , \ \
 \eta_{C_2}\w=\OO_{C_2}\, .$$
An argument via admissible covers implies
 $$[C,p]\in \hh_3(4)^{\mathrm{hyp}} \ \ \ \Leftrightarrow \ \  \ \OO_{C_1}(2p-x_1-x_2)\w=\OO_C\, .$$
In the remaining case, where
$$\eta_{C_1}\w=\OO_{C_1}(2p-x_1-x_2)\in JC_1[2]-\{\OO_{C_1}\}$$ is a
nontrivial $2$-torsion point, $\eta_{C_1}$ is an even and $\eta_{C_2}$ is an odd spin structure, so $[X, \eta]\in \ss_3^{-}$, and accordingly
$[C,p]\in \hh_3(4)^{-}$.




\subsubsection{Genus $4$.} Consider next genus $4$ where
$$\H_4(6)=\H_4(6)^+ \sqcup \H_4(6)^- \sqcup \H_4(6)^{\mathrm{hyp}}$$ has three connected components
of dimension $7$. Let
$$C=C_1\cup C_2\, ,  \ \ \ \ g(C_1)=1\, ,\  \ g(C_2)=2$$
with $C_1\cap C_2= \{ x_1,x_2\} $.
Suppose $p\in C_1$  and
$$[C_1\cup C_2, p]\in \hh_{4}(6)\, .$$
There are two possible twists.

\vspace{12pt}
\noindent $\bullet$\, Twist $I(x_1, C_1)=I(x_2, C_1)=2$.
\vspace{12pt}

We then obtain the following linear equivalences on $C_1$ and $C_2$:
\begin{equation}\label{gen4}
\OO_{C_1}(6p-3x_1-3x_2)\w=\OO_{C_1} \ \mbox{ and } \ \ \omega_{C_2}\w=\OO_{C_2}(x_1+x_2).
\end{equation}
Each equation appearing in (\ref{gen4}) imposes a codimension 1 condition
on the respective Jacobian.
The corresponding spin curve in $[X, \eta]\in \ss_4$ has
 underlying model
$X=C$ (no nodes are blown-up), and the
spin line bundle $\eta\in \mbox{Pic}(X)$ satisfies
$$\eta_1\w=\OO_{C_1}(3p-x_1-x_2) \ \ \ \text{and}\ \ \  \eta_2\w=\OO_{C_2}(x_1+x_2)\, .$$
 As expected, $\eta_i^{\otimes 2}\w=\omega_{C_i}(x_1+x_2)$ for $i=1,2$.

We can further distinguish between the components of the closure $\hh_4(6)$. We have
 $[C,p]\in \hh_4(6)^{\mathrm{hyp}}$ if and only if
$$
\OO_{C_1}(2p-x_1-x_2)\w=\OO_{C_1} \mbox{ and } \  \omega_{C_2}\w=\OO_{C_2}(x_1+x_2)\, ,
$$
whereas $[C,p]\in \hh_4(6)^-$ if and only if
$$
\OO_{C_1}(2p-x_1-x_2)\in JC_1[3]-\{\OO_{C_1}\} \ \mbox{ and } \ \omega_{C_2}\w=\OO_{C_2}(x_1+x_2)\, .
$$

\vspace{12pt}
\noindent $\bullet$\, Twist $I(x_1, C_1)=3$ and $I(x_2, C_1)=1$.
\vspace{12pt}

We then obtain the following
linear equivalences on $C_1$ and $C_2$:
\begin{equation}\label{gen42}
\OO_{C_1}(6p-4x_1-2x_2)\w=\OO_{C_1} \ \mbox{ and } \ \omega_{C_2}\w=\OO_{C_2}(2x_1)\, .
\end{equation}
As before, the
conditions in (\ref{gen42}) are both codimension $1$ in the respective Jacobians.
The corresponding spin curve $[X, \eta]$ is obtained by blowing-up both nodes
$x_1$ and $x_2$,
$$X=C_1\cup C_2 \cup E_1 \cup E_2,$$
with the intersections
$$C_1\cap E_1=\{x_1'\}\, , \ \ C_1\cap E_2=\{x_2'\}\, , \ \
C_2\cap E_1=\{x_1''\}\, , \ \ C_2\cap E_2=\{x_2''\}\, .$$
The line bundle $\eta$ satisfies
 $$\eta_{E_i}\w=\OO_{E_i}(1)\, , \ \
\eta_{C_1}\w=\OO_{C_1}(3p-2x_1-x_2)\, , \ \  \eta_{C_2}\w=\OO_{C_2}(x_1)\, .$$
We have  $[C,p]\in \hh_4(6)^+$ if and only if
$\eta_{C_1}\w=\OO_{C_1}$. If $\eta_{C_1}$ is a nontrivial $2$-torsion point, then $[C,p]\in
\hh_4(6)^-$.

If $p\in C_2$ and $[C_1\cup C_2, p]\in \hh_{4}(6)$,
then  $I(x_1, C_1)=I(x_2, C_1)=-1$ is the only possible twist, and
\begin{equation}
\omega_{C_2}\w=\OO_{C_2}(6p-2x_1-2x_2)\, .
\end{equation}
The corresponding spin structure is realized on the curve $X$ obtained by inserting rational components at both $x_1,x_2\in C$ and
$$\eta_{C_1}\w=\OO_{C_1}\, , \ \  \ \eta_{C_2}\w=\OO_{C_2}(3p-x_1-x_2)\, .$$
 We have  $[C,p]\in \hh_4(6)^{+}$ (respectively $\hh_4(6)^{-}$) if and only if $\eta_{C_2}$ is an odd (respectively even) theta characteristic on $C_2$.

\subsection{Limits of bitangents  in  genus $3$}
\subsubsection{Two elliptic components}
Recall the decomposition into components in the case of genus $3$:
$$\H_3(2,2)=\H_3(2,2)^-\sqcup \H_3(2,2)^+,$$
where the general point $[C, p_1, p_2]$ of $\H_3(2,2)^-$ corresponds
to a bitangent line $$\langle p_1, p_2\rangle\in (\mathbb P^2)^{\vee}$$
of the  canonical model $C\subset \mathbb P^2$ and
 the general point of $\H_3(2,2)^+$ corresponds to a hyperelliptic curve $C$
 together with two points lying a fibre of the hyperelliptic pencil.

Let $C=C_1\cup C_2$ where $C_1$ and $C_2$ are nonsingular
 elliptic curves with
$$C_1\cap C_2=\{x_1, x_2\}\, .$$
 Suppose $[C, p_1, p_2]\in \hh_3(2,2)$.  There are two cases.

\vspace{8pt}
\noindent {\bf Case 1}:\,  $p_1$ and $p_2$ lie on the same component of
$C$, say
$p_1, p_2\in C_1\setminus\{x_1,x_2\}$.

\vspace{8pt}
Then
$I(x_1, C_1)=I(x_2, C_1)=1$, and  the following equation holds:
\begin{equation}\label{bitg1}
\OO_{C_1}(2x_1+2x_2-2p_1-2p_2)\w=\OO_{C_1}\, .
\end{equation}
The induced spin structure is obtained by blowing-up both nodes $x_1$ and $x_2$, and the corresponding spin bundle $\eta$ restricts as
$$\eta_{C_1}\w=\OO_{C_1}(p_1+p_2-x_1-x_2) \ \ \mbox{ and }\ \
\eta_{C_2}\w=\OO_{C_2}.$$
If the linear equivalence
$p_1+p_2\equiv x_1+x_2$ holds on $C_1$,
then $[C, p_1, p_2]\in \hh_3(2,2)^+$ as can be seen directly via admissible covers.\footnote{One can construct an admissible cover $f:C\rightarrow R$, where $R=(\mathbb P^1)_1\cup _t (\mathbb P^1)_2$, with restrictions $f_{| C_i}:C_i\rightarrow (\mathbb P^1)_i$ being the morphisms induced by the linear system $|\mathcal{O}_{C_i}(x_1+x_2)|$ for $i=1,2$ and $f^{-1}(t)=\{x_1,x_2\}$. Then clearly $f(p_1)=f(p_2)$.} If
$$\eta_{C_1}\in JC_1[2]-\{\OO_{C_2}\}\, ,$$
then, since $\eta_{C_2}$ is odd, we have $[C, p_1, p_2]\in \hh_3(2,2)^-$.
There is a $1$-dimensional family of limiting bitangents to $C$, which is to be expected since $C$ is hyperelliptic.

\vspace{8pt}
\noindent {\bf Case 2}:\,
$p_i\in C_i\setminus \{x_1, x_2\}$ for $i=1,2$.
\vspace{8pt}

 Then $I(x_1, C_1)=I(x_2, C_1)=0$. No node is twisted, so
no rational components are inserted for the induced spin structure.
The spin bundle $\eta\in \mbox{Pic}^2(C)$ has restrictions
$$\eta_{C_i}\w=\OO_{C_i}(p_i)\, $$
 for $i=1,2$.
Since the resulting spin curve is odd,
$$[C, p_1, p_2]\in \hh_3(2,2)^-\, .$$
We obtain $16=4\times 4$ limits of bitangents
corresponding to the choice of points
$p_i\in C_i$ satisfying
$$\OO_{C_i}(2p_i-x_1-x_2)\w=\OO_{C_i}\, .$$

\vspace{12pt}

We now study three further degenerate cases to offer an illustration of the transitivity condition in the definition of twists.

\vspace{8pt}
\noindent {\bf Case 3}:\,
$[C_1\cup C_2\cup E, p_1, p_2]\in \widetilde{\H}_3(2,2)$
with $p_1\in E\w= {\mathbb{P}}^1$ ,  $p_2\in C_1$.

\vspace{8pt}
We have the following component intersections:
$$C_1\cap C_2=\{x_2\}\, , \ \ C_1\cap E=\{x_1'\}\,  , \ \
C_2\cap E=\{x_1''\}\, .$$
The twists are subject to three dependent equations.
Let $a=I(x_1',C_1)$. We find
$$I(x_1'',C_2)=-I(x_1'',E)=-2-a\  \ \ \mbox{ and }\ \ \ I(x_2,C_1)=-I(x_2,C_2)
=-a\, .$$
If $a\leq -2$,
the ordering on components defined by $I$,
$$C_1<E\leq C_2<C_1\,\ ,$$
contradicts transitivity. Similarly, if $a\geq 0$, we obtain
$$E\leq C_1\leq C_2<E\, ,$$ which is also a contradiction.
Therefore $a=-1$, and we have
$$\OO_{C_1}(2p_2-2x_2)\w=\OO_{C_1}.$$
The result is a $4$-dimensional subvariety of $\mm_{3,2}$ and therefore
 cannot be a component of $\widetilde{\H}_3(2,2)$.

\vspace{8pt}
\noindent {\bf Case 4}:\,
$\bigl[C_1\cup C_2\cup E_1\cup E_2, p_1, p_2]\in \widetilde{\H}_3(2,2)$
with $p_i\in E_i\w={\mathbb{P}}^1$.
\vspace{8pt}

We have the following component intersections:
$$C_1\cap E_1=\{x_1'\}\, , \ \ C_1\cap E_2=\{x_2'\}\, ,\ \
 C_2\cap E_1=\{x_1''\}\, , \ \ C_2\cap E_2=\{x_2''\}\, .$$
Let $a=I(x_1',C_1)=-I(x_1',E_1)$. We obtain,
after solving a system of equations,
$$I(x_2',C_1)=-I(x_2',E_2)=-2-a\, ,\ \ \  I(x_2'',E_2)=-I(x_2'',C_2)=-a\, ,$$
$$I(x_1'',E_1)=-I(x_1'',C_2)=a+2\, .$$
When $a\leq -2$, we obtain  $$C_1<E_1\leq C_2\leq E_2\leq C_1\, ,$$
a contradiction. The case $a\geq 0$ is similarly ruled out, which forces $a=-1$. Then the twists impose no further constraints on the point in $\mm_{3,2}$, so
 we obtain again a $4$-dimensional subvariety of $\widetilde{\H}_3(2,2)$ which cannot be an irreducible component.

\vspace{8pt}
\noindent {\bf Case 5}:\,
$[C_1\cup C_2\cup E, p_1, p_2]\in \widetilde{\H}_3(2,2)$ with
$p_1\neq p_2\in E\w={\mathbb{P}}^1$.
\vspace{8pt}

We have the following component intersections:
$$C_1\cap C_2=\{x_2\}\, , \ \ C_1\cap E=\{x_1'\}\, , \ \ C_2\cap E=\{x_1''\}\, .$$
Arguments as above yield the conditions
$$\OO_{C_1}(2x_1'-2x_2)\w=\OO_{C_1}\ \  \mbox{ and } \ \ \OO_{C_2}(2x_1''-2x_2)\w=\OO_{C_2}\, $$
which constrains the locus to 3 dimensions in $\mm_{3,2}$.

\subsubsection{Singular quartics}

A nonsingular plane quartic curve
$$C\subset \mathbb P^2$$
is of genus $3$ and
has $28$ bitangent lines corresponding to the set $\theta(C)$
of $28$
odd theta characteristics.
A general quartic
$C\subset \mathbb P^2$ can be reconstructed from its $28$ bitangent
lines \cite{CS}:
 the rational map
$$\mm_3\ \ \dashrightarrow \ \ \mathrm{Sym}^{28} (\mathbb P^2)^{\vee}
\dblq\,
\mathbb{PGL}(3)\, , \ \ \ \ \ C\mapsto \theta(C)$$
is generically injective.
Our approach to
  $\hh_3(2,2)$ naturally recovers classical results on
the limits of bitangents for singular quartic curves.
The investigation also further
 illustrates the difference between $\hh_3(2,2)$ and
$\widetilde{\H}_3(2,2)$.

\begin{definition} Let $C\subset \mathbb P^2$ be an irreducible plane quartic. A line $$L\in (\mathbb P^2)^{\vee}$$ is a \emph{bitangent} of $C$ if
the cycle $L\cdot C$ is everywhere nonreduced and even. A bitangent $L$ is
of {\em type} $i$ if $L$ contains precisely $i$ singularities of $C$.
For $i=0,1,2$, let $b_i$ denote the number of bitangent lines to $C$ of type $i$.
\end{definition}

Let $C\subset \mathbb P^2$ be an irreducible plane quartic having $\delta$ nodes and $\kappa$ cusps. By Pl\"ucker's formulas involving the singularities of the dual plane curve, $C$ has
$$b=b_0+b_1+b_2=28+2\delta(\delta-7)+6\kappa \delta+\frac{9\kappa(\kappa-5)}{2}$$
bitangent lines of which $b_2={\delta+\kappa \choose 2}$ are of type $2$,
see \cite{CS} and \cite[Chapter XVII]{Hil}. In Table 1,
the possibilities for  $b_i$, depending on the singularities of $C$,
are listed.

\begin{table}
\begin{center}
\begin{tabular}{|c|c|c|c|}
\hline
 $(\delta, \kappa)$ &  $b_0$ & $b_1$ & $b_2$ \\
\hline
 $(0,0)$ & 28 & 0 & 0\\
 $(1,0)$ & 16 & 6 & 0 \\
 $(2,0)$ & 8 & 8 & 1\\
 $(3,0)$ & 4 & 6 & 3\\
 $(0,1)$ &  10 & 6 & 0\\
 $(0,2)$ & 1 & 6 & 1\\
 $(0,3)$ & 1 & 0 & 3\\
 $(1,1)$ & 4 & 7 & 1\\
\hline
\end{tabular}

\vspace{8pt}
\end{center}
    \caption{Bitangent data for singular quartic curves \cite{CS,Hil}}
    \label{tab:bitangents}
  \end{table}

In order to recover the sum 28 in the  each line of Table 1,
multiplicities must be calculated.
By
 \cite[Lemma 3.3.1]{CS}:
\begin{enumerate}
\item[$\blacktriangleright$]\,
 a bitangent of type $0$ appears with multiplicity $1$ in the scheme
$\theta(C)$,
\item[$\blacktriangleright$]\,
a bitangent of type $1$  containing a node (respectively a cusp) appears in $\theta(C)$ with multiplicity $2$ (respectively $3$),
\item[$\blacktriangleright$]\,
 a bitangent joining two nodes (respectively two cusps) contributes to $\theta(C)$  with multiplicity $4$ (respectively $9$).
\end{enumerate}

\vspace{6pt}
We will explain how the results of Table \ref{tab:bitangents} can be recovered using the geometry of $\widetilde{\H}_3(2,2)$ in two cases (the
others are similiar).

\vspace{8pt}
\noindent {\bf Case 1}:\, $1$-nodal quartics.
\vspace{8pt}

Let $[C, x_1, x_2]\in \cM_{2,2}$ be a general point and denote by
$$C'=C\,/\,x_1\sim x_2$$
the genus $3$ curve obtained by identifying $x_1$ and $x_2$.
If $[C', p_1, p_2]\in \widetilde{\H}_3(2,2)$,
then $$\OO_{C'}(2p_1+2p_2)\stackrel{\sim}{=}\omega_{C'}\,$$
 and  $\OO_C(2p_1+2p_2)\stackrel{\sim}{=}\omega_C(x_1+x_2)$.
Since multiplication by $2$ on $JC$ has degree $4$,
there are $16$ bitangents of type $0$. All 16 lie
in $\overline{\H}_3(2,2)$ by Lemma \ref{sqqr}.

We determine next the bitangents of type $1$. Let
$$[C\cup E, p_1, p_2]\in \widetilde{\H}_3(2,2),$$
where $E\stackrel{\sim}{=}\mathbb{P}^1$, $p_1\in E$,
$p_2\in C$, and $E\cap C=\{x_1, x_2\}$. Then
$$I(x_1, E)=I(x_2,E)=1$$
and $\omega_{C}=\OO_C(2p_2)$. Hence, $p_2$ is one of the $6$
Weierstrass points of $C$.
Thus the bitangent lines of type $1$ join a node and a Weierstrass point of the normalization. Each such bitangent line is counted with multiplicity $2$
since  $[C\cup E, p_1, p_2]$ has an automorphism of order $2$.
By Proposition \ref{sqqr2}, all 6 lie in $\overline{\H}_3(2,2)$.

Finally, we consider the case
$[C\cup E, p_1, p_2]\in \widetilde{\H}_3(2,2)$ where
$$p_1, p_2\in E-\{x_1, x_2\}\, .$$ Then we find
$I(x_1,C)+I(x_2, C)=4\, .$ Both possibilities $\{I(x_1,C), I(x_2, C)\}=\{2,2\}$ and $\{I(x_1, C), I(x_2, C)\}=\{1,3\}$  impose a nontrivial condition on the curve
$[C, x_1, x_2]\in \cM_{2,2}$, so the case does not appear generically.

We have verified the line
corresponding to
$(1,0)$ in Table 1: $b_0=16$, $b_1=6$, $b_2=0$, and
$28= 16 + 2\cdot 6\, .$

\vspace{8pt}
\noindent {\bf Case 2}:\, $1$-cuspidal quartics.
\vspace{8pt}

 We consider the compact type curve
 $C=C_1\cup C_2$, where $C_i$ are smooth curves of genus $i$ for $i=1,2$ and $C_1\cap C_2=\{x\}\, .$
We furthermore assume $x\in C_2$ is {\em not} a Weierstrass point, so
$C$ is not hyperelliptic. We will
determine the bitangents which have $C$ as the underlying curve.
Suppose
$$[C, p_1, p_2]\in \widetilde{\H}_3(2,2)\, $$
 so $p_1, p_2\in C\setminus \{x\}$.
We distinguish several cases:

\vspace{8pt}
\begin{enumerate}
\item[$\bullet$] $p_1, p_2\in C_1$\,  $\Rightarrow$\, $I(x,C_1)=3$ and $\omega_{C_2}=\OO_{C_2}(2x)$. Since $x$ is not a Weierstrass point, the case is impossible.

\vspace{6pt}
\item[$\bullet$] $p_1\in C_1$ and $p_2\in C_2$\,   $\Rightarrow$\,   $I(x,C_1)=1$.
We obtain the conditions
$$\OO_{C_1}(2p_1)=\OO_{C_1}(2x) \ \mbox{ and } \omega_{C_2}=\OO_{C_2}(2p_2).$$
All solutions are smoothable by Proposition \ref{sqqr2}.
The case accounts for $18$ points in the fibre of the
projection $\overline{\H}_3(2,2)\rightarrow \mm_3$ over $[C]$, and shows that the cuspidal curve $$\phi=\phi_{\omega_{C_2}(2x)}:C_2\rightarrow \mathbb P^2$$ has $6$ bitangents of type 1  joining the cusp $\phi(x)$ with one of the Weierstrass points of $C$. Each such line is counted with multiplicity $3$ corresponding to the nontrivial $2$-torsion points of $[C_1,x]\in \mm_{1,1}$.

\vspace{6pt}
\item[$\bullet$] $p_1, p_2\in C_2$\, $\Rightarrow$\, $I(x,C_2)=1$.
We obtain the condition
$$\OO_{C_2}(2p_1+2p_2)=\omega_{C_2}(2x).$$
The cohomology class of the images of both maps $f_i:C_2\rightarrow \mbox{Pic}^2(C_2)$ given by
$$f_1(y)=\OO_{C_2}(2y), \ \ f_2(y)=\omega_{C_2}(2x-2y)$$
equals $4\theta \in H^2(\mbox{Pic}^2(C_2), \mathbb Z)$, hence
$$[f_1(C_2)]\cdot [f_2(C_2)]=16\theta^2=32\, .$$
The cases where $p_1=x$ and $p_2\in C_2$ is a Weierstrass point
must be discarded, so we are left with $10=16-6$ possibilities for
the bitangent lines.
\end{enumerate}

\vskip 8pt

Suppose one of the points $p_1$ or $p_2$ tends to  $x$.
The case when both $p_1$ and $p_2$ merge to $x$
leads to $x\in C_2$ being a Weierstrass point and can be discarded.
We are thus left with the following situation.

\vspace{8pt}
\begin{enumerate}
\item[$\bullet$] $[C_1\cup E \cup C_2, p_1, p_2]\in \widetilde{\H}_3(2,2)$
with $p_1\in E\cong \mathbb P^1$ and $p_2\in C_2$. Let
 $$\{y\}=C_1\cap E \ \ \ \text{and} \ \ \  \{x\}=C_2\cap E\, .$$
We compute $I(y,C_1)=-1$ and $I(x,C_2)=-1$, therefore $x$ is a Weierstrass point of $C_2$.

The locus of such points forms a $5$-dimensional component of $\widetilde{\H}_3(2,2)$ that lies entirely in the boundary $\partial \mm_{3,2}$
(as can be checked by residue restrictions or limit
linear series arguments). The general point can {\em not} be smoothed.
\end{enumerate}
\vspace{8pt}

We have verified the line
corresponding to
$(0,1)$ in Table 1: $b_0=10$, $b_1=6$, $b_2=0$, and
$28= 10+ 3\cdot 6\, .$


\section{Twisted $k$-canonical divisors}\label{nndd}
\subsection{Moduli space}
Let $g$ and $n$ be in the stable range $2g-2+n>0$, and let
$k\in \mathbb{Z}_{\geq 0}$.
Let $$\mu=(m_1,\ldots,m_{n})\, , \ \ \ m_i\in \mathbb{Z}\, $$
be a vector satisfying
$$\sum_{i=1}^n m_i=k(2g-2)\,. $$
We define the closed substack  of {\em $k$-canonical divisors}
$\H^k_g(\mu)\subset \cM_{g,n}$ by
$$\H^k_g(\mu)=\Bigl\{[C, p_1, \ldots, p_n]\in \cM_{g,n}\, \Big| \, \OO_C\Bigl(\sum_{i=1}^n m_i p_i\Bigr)=\omega^k_C \Bigr\}\, . $$
The main focus of the paper has been on {\em differentials} which is the
$k=1$ case. {\em Rational functions} and {\em quadratic differentials} correspond
to $k=0$ and $k=2$ respectively.

\vspace{5pt}
\begin{definition} \label{ffgg} The divisor $\sum_{i=1}^n m_i p_i$
associated to $[C,p_1,\ldots,p_n]\in \mm_{g,n}$ is \newline
{\em $k$-twisted canonical} if there exists a twist
$I$ for which
$$\nu^*\OO_{C}\left(\sum_{i=1}^n m_ip_i\right)\, \stackrel{\sim}{=}\,
\nu^*(\omega^k_C)\otimes \OO_{C_I}
\left(\,\sum_{q\in \mathsf{N}_I} I(q,D'_q)\cdot q'+I(q, D''_q)\cdot q''\right)\,  $$
on the partial normalization $C_I$.
\end{definition}
\vspace{5pt}

We define the subset $\widetilde{\H}^k_g(\mu)\subset \mm_{g,n}$
parameterizing $k$-twisted canonical divisors by
$$\widetilde{\H}^k_g(\mu)=\left\{[C, p_1, \ldots, p_n]\in \mm_{g,n}\,
\Big| \,  \sum_{i=1}^n m_ip_i \ \ \text{is a $k$-twisted canonical divisor}\
\right\}\, .$$
By definition, we have
$$\widetilde{\H}^k_g(\mu)\cap \cM_{g,n} =\H^k_g(\mu)\, ,$$
 so $\H^k_g(\mu) \subset \widetilde{\H}^k_g(\mu)$ is an
open set.

\begin{theorem}\label{maink}
The moduli space
$\widetilde{\H}^k_g(\mu)\subset \mm_{g,n}$ is a closed substack
with irreducible components all of dimension at least $2g-3+n$.
\end{theorem}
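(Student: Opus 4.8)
The plan is to follow the proofs of Propositions~\ref{xzs} and~\ref{dim2}, systematically replacing $\omega_C$ and $\omega_\pi$ by $\omega_C^k$ and $\omega_\pi^k$. Two assertions must be proved: that $\widetilde{\H}^k_g(\mu)\subset\mm_{g,n}$ is Zariski closed, and that every irreducible component has dimension at least $2g-3+n$. No upper bound or purity is claimed, which is why the statement is weaker than Theorems~\ref{main1}--\ref{main2}: in the $k=1$ case the upper bound of Proposition~\ref{dim1} rests on Polishchuk's dimension theorem for the strata of abelian differentials, and the analogous input for $k\ne 1$ is not available. Throughout I reserve the letter $k$ for the canonical power and write $s$ for the number of negative parts of $\mu$.

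\textbf{Closedness.} The argument of Section~\ref{degg} proving Proposition~\ref{xzs} applies with only cosmetic changes. Given a one-parameter family $\pi:\mathcal C\to\Delta$ whose punctured disk $\Delta^\star$ maps to $\widetilde{\H}^k_g(\mu)$, the reduction of Section~\ref{jjjj} to the generically untwisted case uses only the defining isomorphism of a $k$-twisted canonical divisor restricted to the subcurves $\mathcal C^v$, with $\omega$ replaced by $\omega^k$. In the generically untwisted case (Section~\ref{gentw}) one takes the crepant resolution $\epsilon:\widetilde{\mathcal C}\to\mathcal C$ of the $A_r$-singularities at the smoothed nodes and writes $\epsilon^*\omega_\pi^k$ and $\epsilon^*\OO_{\mathcal C}(\sum m_ip_i)$ as differing by a Cartier divisor $\OO_{\widetilde{\mathcal C}}(T)$ supported over $0\in\Delta$ and trivial on every exceptional $(-2)$-curve. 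The chain equations \eqref{xxqq}, the uniform spacing, the balancing identity \eqref{gqqg}, the construction of the twist $I_0$ on $\mathsf{BN}(C_0)$, and the verification of the balancing, vanishing, sign, and transitivity conditions involve nothing about the power; neither does the isomorphism \eqref{gg34} on the partial normalization, which again follows by pulling back $\epsilon^*\OO_{\mathcal C}(\sum m_ip_i)\stackrel{\sim}{=}\epsilon^*\omega_\pi^k\otimes\OO_{\widetilde{\mathcal C}}(T)$. Hence $\widetilde{\H}^k_g(\mu)$ is closed. The same argument, applied to an abstract flat family of nodal curves carrying a line bundle, yields the $k$-analogue of Lemma~\ref{fffff}: limits of $k$-twisted canonical line bundles are $k$-twisted canonical.

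\textbf{Lower bound.} First, the $k$-analogue of Lemma~\ref{gcct} holds: the smoothing construction of Section~\ref{smm} goes through verbatim with $\omega_\pi$ replaced by $\omega_\pi^k$. The depth function on $\Gamma_I(C)$, the integer $M_I$, the assignment $\gamma_v=\mathrm{depth}(v)\cdot M_I$, the inserted destabilizing rational chains, and properties (i)--(iii) of the resulting line bundle $\widetilde{\mathcal L}$ are unchanged, property (ii) now reading $\widetilde{\mathcal L}|_{C_v}\stackrel{\sim}{=}\omega_C^k|_{C_v}\otimes\OO_{C_v}(\sum_{q\in\mathsf N_I}I(q,C_v)\cdot q)\stackrel{\sim}{=}\OO_{C_v}(\sum_{p_i\in C_v}m_ip_i)$ by Definition~\ref{ffgg}. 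So the line bundle of a $k$-twisted canonical divisor smooths to $\omega^k$ on a nonsingular curve. Now run the proof of Proposition~\ref{dim2}: for $[C,p_1,\dots,p_n]\in\widetilde{\H}^k_g(\mu)$, rigidify the rational components of $[C,p_1,\dots,p_s]$ with one or two special points by adding $q$-markings (say $r_1$ and $r_2$ such components), form the versal deformation $\mathcal V$ of the resulting stable curve and the relative moduli $\epsilon:\cB\to\mathcal V$ of degree $k(2g-2)$ line bundles on the fibers. Over the nonsingular locus the bundle $\omega_\pi^k$ defines a section whose image $\cW^\star\subset\cB^\star$ has codimension $g$; let $\cW$ be its closure, so $\dim\cW=\dim\cB^\star-g$. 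By the $k$-analogue of Lemma~\ref{fffff} every point of $\cW$ parameterizes a $k$-twisted canonical bundle, and by the $k$-analogue of Lemma~\ref{gcct} the bundle $L=\OO_C(\sum m_ip_i)$ lies in $\cW$ over $[C]$. Bounding the dimension of the space of good sections of $L'(-\sum_{i=1}^s m_ip_i)$, as $[C',L']$ varies in $\cW$, by $\dim\cW+\chi$ via Proposition~\ref{qq1}, and then imposing the (at most) $k(2g-2)-\sum_{i=1}^s m_i-(n-s)$ conditions pinning the positive part to the shape $\sum_{i=s+1}^n m_ip_i$ exactly as in Section~\ref{efb}, one finds that the space of $k$-twisted canonical divisors of shape $\mu$ on the fibers of $\pi:\mathcal C\to\cW$ near $[C,p_1,\dots,p_n]$ has dimension at least $2g-3+n+2r_1+r_2$; the $k$-dependent terms cancel just as in the $k=1$ count. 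Quotienting by the automorphisms of the added unstable components (accounting for $2r_1+r_2$), the final paragraph of the proof of Proposition~\ref{dim2} shows that no irreducible component of $\widetilde{\H}^k_g(\mu)$ through $[C,p_1,\dots,p_n]$ can have dimension strictly below $2g-3+n$. This completes the proof.

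\textbf{Main obstacle.} There is no serious obstacle: the proof is essentially a transcription. The one point requiring care is that the speciality of $\omega_C$ is nowhere used in Sections~\ref{degg}, \ref{smm}, \ref{qq}, \ref{efb} — it enters the $k=1$ theory only through Polishchuk's result, hence only in the upper-bound direction of Section~\ref{efa}, which plays no role in Theorem~\ref{maink}. The only bookkeeping care needed is to keep the canonical power $k$ distinct from the number of negative parts of $\mu$, and to note that the $k$-dependence of the intermediate Euler characteristics cancels against that of the shape constraints to give the same bound $2g-3+n$.
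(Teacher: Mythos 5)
Your proposal is correct and takes exactly the same route the paper does: the paper's proof of Theorem \ref{maink} is the single sentence ``replace $\omega_C$ by $\omega_C^k$ in the proofs of Proposition \ref{xzs}, Lemma \ref{gcct}, Lemma \ref{fffff}, and Proposition \ref{dim2},'' and you have carried out that substitution in detail, correctly identifying that the upper bound of Proposition \ref{dim1} (which relies on Polishchuk's $k=1$ dimension theorem) is deliberately not claimed, correctly noting the notational clash between the canonical power and the number of negative parts, and verifying that the $k$-dependent terms in the Euler-characteristic count and in the shape constraints cancel to leave the bound $2g-3+n$.
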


\begin{proof} We
replace $\omega_C$ by $\omega_C^k$ in the proofs
of Proposition \ref{xzs}, Lemma \ref{gcct}, Lemma \ref{fffff},
and Proposition \ref{dim2}. The tensor power $k$
plays no essential role in the arguments.
\end{proof}

\subsection{Rational functions}
In the $k=0$ case of rational functions, the moduli space
$$\widetilde{\H}^0_g(\mu)\subset \mm_{g,n}$$ may have irreducible
components of various dimensions. No result parallel
to the  simple
dimension behavior of Theorems \ref{main1} and \ref{main2}
holds.

Let $\mu=(m_1,\ldots,m_n)$ be a vector satisfying
$$\sum_{i=1}^nm_i =0\,. $$
Let $\mm_{g}(\mathbb{P}^1,\mu)\,\widetilde{}\,$
be the moduli space of stable
maps to {\em rubber}{\footnote{See \cite{FaP} for definitions
and further references.}} with ramification profiles
over $0$ and $\infty$ determined by the positive and negative
part of $\mu$.
The following $k=0$
result has a straightforward proof which we leave to the reader.

\begin{proposition}
The image of the canonical map
$ \mm_{g}(\mathbb{P}^1,\mu)\ \widetilde{} \ \rightarrow\ \mm_{g,n} $
equals $$\widetilde{\H}_g^0(\mu)\subset \mm_{g,n}\, .$$
\end{proposition}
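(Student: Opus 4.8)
The plan is to establish the two inclusions separately, passing in each case between a rubber stable map $f\colon C'\to T$ and the twist it induces on the stabilized source. Here $T=T_0\cup T_1\cup\cdots\cup T_\ell$ is an expanded rubber target, a chain of $\mathbb{P}^1$'s with the relative divisor $0$ on $T_0$ and $\infty$ on $T_\ell$, and the parts of $\mu$ record the contact orders of the markings $p_1,\ldots,p_n$ of $C'$ with $0$ (the parts $m_i>0$) and with $\infty$ (the parts $m_i<0$), the parts $m_i=0$ being unconstrained points; exactly as for $\widetilde{\H}_g(\mu)$, the vanishing parts are absorbed by the forgetful map, so I may assume no $m_i$ is zero. \emph{For the inclusion of the image in $\widetilde{\H}^0_g(\mu)$:} assign to each component of $C'$ its \emph{level}, the index of the target component into which it maps, and push this forward to a level function on the components of the stabilization $[C,p_1,\ldots,p_n]$, which may jump across a node $q$ of $C$ coming from a contracted chain. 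By predeformability the ramification order of $f$ is the same across each step of such a chain; set $I(q,\cdot)$ equal to this common value on the lower side, its negative on the higher side, and $I=0$ elsewhere. Balancing is built in, the $\sim$-classes are precisely the maximal connected subcurves of constant level, and vanishing, sign and transitivity follow because levels strictly increase along the directed edges of $\Gamma_I(C)$. On a $\sim$-class $C_v$ of level $j$, contracting the bubbled rational components and identifying $T_j\cong\mathbb{P}^1$ exhibits $f|_{C_v}$ as a rational function with divisor $\sum_{p_i\in C_v}m_ip_i-\sum_qI(q,C_v)\,q$, which is exactly the isomorphism demanded by Definition \ref{ffgg} with $k=0$ (so $\omega_C^0=\OO_C$) on the connected component $C_v$ of the partial normalization. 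Hence $[C,p_1,\ldots,p_n]\in\widetilde{\H}^0_g(\mu)$.

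\emph{For the reverse inclusion:} let $[C,p_1,\ldots,p_n]\in\widetilde{\H}^0_g(\mu)$ carry a twist $I$. Since $\Gamma_I(C)$ has no directed loops, the depth function $d\colon\mathsf{Irr}(C)^\sim\to\mathbb{Z}_{\ge0}$ of Section \ref{smm} is defined; put $L=\max_v d(v)$ and $T=T_0\cup\cdots\cup T_L$. Definition \ref{ffgg} supplies on each $\sim$-class $C_v$ a rational function $g_v$ with $\mathrm{div}(g_v)=\sum_{p_i\in C_v}m_ip_i-\sum_qI(q,C_v)\,q$. Build the rubber map as follows: map $C_v$ to $T_{d(v)}$ by $g_v$; at each twisted node $q$ joining $C_v$ to a class of strictly larger depth $d'$, insert a chain of $d'-d(v)-1$ rational components mapping to the intermediate target levels by $z\mapsto z^{|I(q,C_v)|}$, exactly as in the proof of Lemma \ref{gcct}; and at each marking $p_i$ sitting on a class $C_v$ with $m_i>0$ and $d(v)>0$ (respectively $m_i<0$ and $d(v)<L$), insert a chain of rational components running down to level $0$ (respectively up to level $L$), each mapping by $z\mapsto z^{|m_i|}$, so that $p_i$ comes to lie over the relative divisor $0$ (respectively $\infty$). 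Every inserted component carries exactly two special points and is therefore contracted by stabilization, so the stabilized source is again $[C,p_1,\ldots,p_n]$; predeformability holds at every inserted node because the two sides have matching ramification orders by construction; and $f^{-1}(0)=\sum_{m_i>0}m_ip_i$, $f^{-1}(\infty)=\sum_{m_i<0}|m_i|p_i$ by the shape of $\mathrm{div}(g_v)$. Thus $[C,p_1,\ldots,p_n]$ lies in the image.

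The one genuinely delicate point is the last step of the reverse inclusion: the functions $g_v$ furnished componentwise by Definition \ref{ffgg} must be promoted to an actual map on each \emph{reducible} $\sim$-class, which requires fixing the $h^1(\Gamma_I)$ worth of $\mathbb{C}^*$-gluing parameters along the untwisted nodes; this is the same subtlety resolved at the end of the proof of Lemma \ref{gcct}, and is handled the same way. Everything else is bookkeeping with levels and contracted rational chains. Finally, since $\widetilde{\H}^0_g(\mu)$ is a closed substack by Theorem \ref{maink} and the image of the proper space $\mm_{g}(\mathbb{P}^1,\mu)\,\widetilde{}$ in the separated stack $\mm_{g,n}$ is closed, the equality of point sets proved above is an equality of closed substacks of $\mm_{g,n}$.
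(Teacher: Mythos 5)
The paper deliberately omits this proof, calling it ``straightforward'' and leaving it to the reader, so there is no argument of the authors' to compare against; your proposal has to stand on its own, and it does. The forward direction is exactly the right construction: the level function is well defined on $\mathsf{Irr}(C)$, the twist at a node of $C$ where the level jumps is the common predeformable ramification order along the (possibly empty) contracted chain, the four axioms follow from strict monotonicity of level along directed edges, and $\mathrm{div}(f|_{C_v})=\sum_{p_i\in C_v}m_ip_i-\sum_qI(q,C_v)q$ comes out correctly even for markings transported to $C_v$ along contracted rational tails, since predeformability preserves their multiplicity. The reverse direction, using the depth function of $\Gamma_I(C)$ to assign target levels and inserting bubble chains both at twisted nodes and at markings that must be carried to $T_0$ or $T_L$, assembles a genuine rubber map whose stabilization is $[C,p_1,\ldots,p_n]$, with the relative and predeformability conditions holding by construction.

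The one place your write-up misfires is the concluding paragraph naming the $\mathbb{C}^*$-gluing parameters as the ``genuinely delicate point.'' They are not a difficulty here. Definition \ref{ffgg} gives an isomorphism of line bundles on the \emph{entire} partial normalization $C_I$, and each $\sim$-class $C_v$ is a connected component of $C_I$; so the trivializing section $g_v$ is furnished at once on all of $C_v$, up to a single overall scalar, with nothing to glue along the untwisted internal nodes of $C_v$. And since distinct $\sim$-classes map to distinct bubbles of the expanded target, there is no gluing constraint across twisted nodes either; the $h^1(\Gamma_I)$ worth of $\mathbb{C}^*$-factors that had to be matched in Lemma \ref{gcct} arose because a single line bundle had to descend to the unnormalized total curve, and nothing analogous is needed to assemble a rubber map. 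What does deserve an explicit sentence, and is only implicit in your argument, is stability of the expanded target: any chain realizing $d(w)=L$ passes through a $\sim$-class of every depth $0,1,\ldots,L$, so each bubble $T_j$ carries some $C_v$ with $d(v)=j$ and no level is a trivial cylinder.
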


\subsection{Higher $k$}
For $k>1$, we do not know uniform dimension results for
$\H^k_g(\mu)$. Is the locus
$$\H^k_g(\mu)\subset {\mathcal{M}}_{g,n}$$
pure of expected codimension $g$?
Unfortunately, the answer is {\em no}:
if all parts of $\mu$ are non-negative
and divisible by $k$, then the sublocus
\begin{equation}\label{iss}
\H^1_g\left(\frac{1}{k}\cdot \mu\right) \subset \H^k_g(\mu)
\end{equation}
is of codimension $g-1$ in ${\mathcal{M}}_{g,n}$.

\vspace{8pt}
\noindent{\bf Question:}
{\em Perhaps  construction \eqref{iss} is the only
source of impure dimension?}
\vspace{8pt}

In the $k=2$ case of quadratic differentials, the dimension theory developed in
 \cite{Masur,Veech} answers the above question in the affirmative when
 $$ m_i\in \{-1,0,1,2,\ldots\} \ \ \text{for all $1\leq i \leq n$}\, .$$
 An affirmative answer for all $g$, $\mu$, and $k>1$ would perhaps be expected.

\newpage

\begin{center}
\bf{Appendix: The weighted fundamental class of $\widetilde{\H}_g(\mu)$} \\
\vspace{8pt}
{\em by F. Janda, R. Pandharipande, A. Pixton, and D. Zvonkine}
\end{center}

\vspace{10pt}
\noindent{A.1\, {\bf Stable graphs and strata.}}
\vspace{8pt}

In the strictly meromorphic case, the fundamental class of $\widetilde{\H}_g(\mu)$
weighted by intrinsic multiplicities is conjectured here to
be an explicit cycle in the tautological ring $R^*(\oM_{g,n})$
found by Pixton in 2014.
The formula is written in term of a summation
over stable graphs $\Gamma$ indexing
the strata of $\oM_{g,n}$.
The summand in Pixton's formula corresponding to $\Gamma$  is a
product over vertex, marking, and edge factors.
We review here the standard indexing of the strata
of $\oM_{g,n}$ by stable graphs.

The strata of the moduli space of curves correspond
to {\em stable graphs}
$$\Gamma=(\VV, \HH,\LL, \ \mathrm{g}:\VV \rarr \Z_{\geq 0},
\ v:\HH\rarr \VV,
\ \iota : \HH\rarr \HH)$$
satisfying the following properties:
\begin{enumerate}
\item[(i)] $\VV$ is a vertex set with a genus function $\g:\VV\to \Z_{\geq 0}$,
\item[(ii)] $\HH$ is a half-edge set equipped with a
vertex assignment $v:\HH \to \VV$ and an involution $\iota$,
\item[(iii)] $\EE$, the edge set, is defined by the
2-cycles of $\iota$ in $\HH$ (self-edges at vertices
are permitted),
\item[(iv)] $\LL$, the set of legs, is defined by the fixed points of $\iota$ and is
endowed with a bijective correspondence with a set of markings,
\item[(v)] the pair $(\VV,\EE)$ defines a {\em connected} graph,
\item[(vi)] for each vertex $v$, the stability condition holds:
$$2\g(v)-2+ \n(v) >0,$$
where $\n(v)$ is the valence of $\Gamma$ at $v$ including
both half-edges and legs.
\end{enumerate}
An automorphism of $\Gamma$ consists of automorphisms
of the sets $\VV$ and $\HH$ which leave invariant the
structures $\LL$, $\mathrm{g}$, $v$, and $\iota$.
Let $\text{Aut}(\Gamma)$ denote the automorphism group of $\Gamma$.

The genus of a stable graph $\Gamma$ is defined by:
$$\g(\Gamma)= \sum_{v\in V} \g(v) + h^1(\Gamma).$$
A stratum of the moduli space $\oM_{g,n}$
of Deligne-Mumford stable curves naturally determines
a stable graph of genus $g$ with $n$ legs by considering the dual
graph of a generic pointed curve parametrized by the stratum.
Let
$\mathsf{G}_{g,n}$
be the set of isomorphism classes of stable graphs of genus $g$ with $n$ legs.
The set $\mathsf{G}_{g,n}$ is finite.

To each stable graph $\Gamma$, we associate the moduli space
\begin{equation*}
\oM_\Gamma =\prod_{v\in \VV} \oM_{\g(v),\n(v)}.
\end{equation*}
 Let $\pi_v$ denote the projection from $\oM_\Gamma$ to
$\oM_{\g(v),\n(v)}$ associated to the vertex~$v$.  There is a
canonical
morphism
\begin{equation}\label{dwwd}
\xi_{\Gamma}: \oM_{\Gamma} \rarr \oM_{g,n}
\end{equation}
 with image{\footnote{
The degree of $\xi_\Gamma$ is $|\text{Aut}(\Gamma)|$.}}
equal to the  stratum
associated to the graph $\Gamma$.  To construct $\xi_\Gamma$,
a family of stable pointed curves over $\oM_\Gamma$ is required.  Such a family
is easily defined
by attaching the pull-backs of the universal families over each of the
$\oM_{\g(v),\n(v)}$  along the sections corresponding to half-edges.

 \vspace{10pt}
\noindent{A.2\, \bf{Additive generators of the tautological ring.}}
\vspace{8pt}

Let $\Gamma$ be a stable graph.
A {\em basic class} on $\oM_\Gamma$
is defined to be a product of monomials in $\kappa$ classes{\footnote{Our
convention is
$\kappa_i= \pi_*(\psi_{n+1}^{i+1})\ \in A^{i}(\oM_{g,n},\mathbb{Q})$ where
$$\pi: \oM_{g,n+1} \rightarrow \oM_{g,n}$$ is the
map forgetting the marking $n+1$. For a review of $\kappa$ and
and the cotangent $\psi$ classes, see \cite{GraPan}.}}
 at each
vertex of the graph and powers of $\psi$ classes at each half-edge (including the legs),
$$
\gamma = \prod_{v\in \mathrm{V}}
\prod_{i>0}\kappa_i[v]^{x_i[v]} \ \cdot
\ \prod_{h \in \mathrm{H}} \psi_{h}^{y[h]}\
\in A^*(\oM_\Gamma)\ ,
$$
where $\kappa_i[v]$ is the $i^{\rm th}$ kappa class on $\oM_{\mathrm{g}(v),\mathrm{n}(v)}$.
We impose the condition
$$
\sum_{i>0} i x_i[v]
+ \sum_{h\in \HH[v]} y[h] \leq
\text{dim}_{\mathbb{C}}\ \oM_{\mathrm{g}(v), \mathrm{n}(v)} =
3\mathrm{g}(v)-3+ \mathrm{n}(v)
$$
at each vertex to avoid  the trivial vanishing of $\gamma$.
Here,
$\HH[v]\subset \HH$
is the set of legs (including the half-edges) incident to~$v$.

Consider the $\mathbb{Q}$-vector space $\cS_{g,n}$ whose basis is given by the isomorphism classes of pairs $[\Gamma,\gamma]$, where $\Gamma$ is a stable graph of genus $g$ with $n$
legs and $\gamma$ is a basic class on $\oM_\Gamma$.
Since there are only finitely many pairs $[\Gamma, \gamma]$ up to isomorphism, $\cS_{g,n}$ is finite dimensional.
The canonical map
$$q:\cS_{g,n} \rightarrow R^*(\oM_{g,n})$$
is surjective \cite{GraPan} and provides additive generators of the tautological ring.
The kernel of $q$ is the ideal of tautological relations, see \cite[Section 0.3]{PPZ}.

\vspace{10pt}
\noindent{A.3\, \bf{Pixton's formula.}}
\vspace{8pt}

\noindent{A.3.1\, \bf{Weighting.}} Let $\mu=(m_1,\ldots,m_n)$ be a vector of zero and pole multiplicities
satisyfing
$$\sum_{i=1}^n m_i= 2g-2\, .$$
It will be convenient to work with the shifted vector defined by
$$\widetilde{\mu}=(m_1+1,\ldots, m_n+1)\, , \ \ \ \ \widetilde{m}_i=m_i+1\, .$$
Let
$\Gamma$ be a stable graph of genus $g$ with $n$ legs.
An {\em admissible weighting}
of $\Gamma$ is a function on the set of half-edges,
$$ w:\HH(\Gamma) \rightarrow \mathbb{Z},$$
which satisfies the following three properties:
\begin{enumerate}
\item[(i)] $\forall h_i\in \LL(\Gamma)$, corresponding to
 the marking $i\in \{1,\ldots, n\}$,
$$w(h_i)=\widetilde{m}_i\ ,$$
\item[(ii)] $\forall e \in \EE(\Gamma)$, corresponding to two half-edges
$h(e),h'(e) \in \HH(\Gamma)$,
$$w(h)+w(h')=0\,$$
\item[(iii)] $\forall v\in \VV(\Gamma)$,
$$\sum_{h\mapsto v} w(h)=2 \g(v)-2+\n(v)\, ,$$
where the sum is taken over {\em all} $\mathsf{n}(v)$ half-edges incident
to $v$.
\end{enumerate}

Let $r$ be a positive integer.
An {\em admissible weighting mod $r$}
of $\Gamma$ is a function,
$$ w:\HH(\Gamma) \rightarrow \{0,\ldots, r-1\},$$
which satisfies exactly properties (i-iii) above, but
with the equalities replaced, in each case, by the condition of
{\em congruence $mod$ $r$}.
For example, for (i), we require
$$w(h_i)=\widetilde{m}_i \mod r\, .$$
Let $\mathsf{W}_{\Gamma,r}$ be the set of admissible weightings mod $r$
of $\Gamma$. The set $\mathsf{W}_{\Gamma,r}$ is finite.

\vspace{8pt}
\noindent{A.3.2\, \bf{Pixton's cycle.}}
Let $r$ be a positive
integer.
We denote by
$\PPP_{g,\mu}^{d,r}\in R^d(\oM_{g,n})$ the degree $d$ component of the tautological class
\begin{multline*}
\hspace{-10pt}\sum_{\Gamma\in \mathsf{G}_{g,n}}
\sum_{w\in \mathsf{W}_{\Gamma,r}}
\frac1{|{\text{Aut}}(\Gamma)| }
\,
\frac1{r^{h^1(\Gamma)}}
\;
\xi_{\Gamma*}\Bigg[
\prod_{v\in \VV(\Gamma)} \exp(-\kappa_1[v]) \,
\prod_{i=1}^n \exp(\widetilde{m}_i^2 \psi_{h_i}) \cdot
\\ \hspace{+10pt}
\prod_{e=(h,h')\in \EE(\Gamma)}
\frac{1-\exp(-w(h)w(h')(\psi_h+\psi_{h'}))}{\psi_h + \psi_{h'}} \Bigg]\, .
\end{multline*}

The following fundamental polynomiality property of $\PPP_{g,\mu}^{d,r}$  has
been proven by Pixton.

\vspace{9pt}
\noindent {\bf{Proposition}} (Pixton \cite{Pix}). {\em For fixed $g$, $\mu$,
 and $d$, the \label{pply}
class
$$\PPP_{g,\mu}^{d,r} \in R^d(\oM_{g,n})$$
is polynomial in $r$ for sufficiently large $r$.}
\vspace{9pt}

We denote by $\PPP_{g,\mu}^d$ the value at $r=0$
of the polynomial associated to $\PPP_{g,\mu}^{d,r}$ by Proposition \ref{pply}. In other words,
$\PPP_{g,\mu}^d$ is the {\em constant} term of the associated polynomial in $r$.
The cycle $\PPP_{g,\mu}^d$ was constructed by Pixton \cite{Pix} in 2014.

If $d>g$, Pixton conjectured (and Clader and Janda have proven
\cite{jc}) the vanishing
$$\PPP_{g,\mu}^d =0 \ \in R^*(\overline{\mathcal{M}}_{g,n})\, .$$
If $d=g$, the class $\PPP_{g,\mu}^g$ is nontrivial. When
restricted to the moduli of curves of compact type,
$\PPP_{g,\mu}^g$ is related to canonical divisors
via the Abel-Jacobi map by earlier work of Hain \cite{rh}.
We propose a precise relationship between $\PPP_{g,\mu}^g$
and a  weighted fundamental classes of the moduli space of
twisted canonical divisors on $\overline{\mathcal{M}}_{g,n}$.

\vspace{10pt}
\noindent{A.4\, \bf{The moduli of twisted canonical divisors.}}
\vspace{8pt}

We consider here the strictly meromorphic case where
$\mu=(m_1,\ldots,m_n)$ has at least one negative part.
We construct a cycle
$$\mathsf{H}_{g,\mu} \in A^g(\oM_{g,n})$$
by summing over the irreducible components of
the moduli space $\widetilde{\H}_g(\mu)$
of twisted canonical divisors defined in the paper.

Let $\mathsf{S}_{g,\mu}\subset \mathsf{G}_{g,n}$ be the set of {\em simple} star
graphs defined by the following properties:

\begin{enumerate}
\item[$\bullet$]
 $\Gamma\in \mathsf{S}_{g,\mu}$ has
a single center vertex $v_0$,
\item[$\bullet$]
edges (possibly multiple) of $\Gamma$ connect
$v_0$ to
outlying vertices $v_1,\ldots, v_k$,
\item[$\bullet$]
the negative parts of $\mu$ are distributed to the center $v_0$ of $\Gamma$.
\end{enumerate}
A simple star graph $\Gamma$ has no self-edges. Let
$\VV_{\mathrm{out}}(\Gamma)$ denote the set of outlying vertices.
The simplest
star graph consists of the center alone with no outlying vertices.

A twist $I$ of a simple star graph is a function
$$I: \mathsf{E}(\Gamma) \rightarrow \mathbb{Z}_{>0}\ $$
satisfying:
\begin{enumerate}
\item[$\bullet$] for the center $v_0$ of $\Gamma$,
$$2\g(v_0)-2  +\sum_{e\mapsto v_0} (I(e)+1)= \sum_{i\mapsto v_0} m_i \, ,$$
\item[$\bullet$] for each outlying  vertex $v_j$ of $\Gamma$,
$$2\g(v_j)-2  +\sum_{e\mapsto v_j} (-I(e)+1)= \sum_{i\mapsto v_j} m_i \, ,$$
\end{enumerate}
Let $\mathrm{Tw}(\Gamma)$ denote the finite set of possible twists.

The notation $i\mapsto v$ in  above equations
denotes markings (corresponding to
the parts of $\mu$) which are incident to the vertex $v$.
We denote by $\mu[v]$ the vector consisting of the parts of
$\mu$ incident to $v$.
Similarly,
$e\mapsto v$ denotes edges incident to the vertex $v$.
We denote by $-I[v_0]-1$ the vector of values $-I(e)-1$ indexed by all edges
incident to $v_0$, and by
$I[v_j]-1$ the vector of values $I(e)-1$ indexed by all edges
incident to an outlying vertex $v_j$.

We define the weighted fundamental class
$\mathsf{H}_{g,\mu}\in A^g(\oM_{g,n})$  of the moduli space
$\widetilde{H}_g(\mu)$ of twisted canonical divisors in the
strictly meromorphic case by
\begin{multline*}
\mathsf{H}_{g,\mu} =
\sum_{\Gamma\in \mathsf{S}_{g,\mu}}
\sum_{I\in {\mathrm{Tw}}(\Gamma)}
\frac {\prod_{e\in \mathsf{E}(\Gamma)} I(e)}{|{\text{Aut}}(\Gamma)| }
\;
\xi_{\Gamma*}\Bigg[
\Big[\overline{\H}_{\g(v_0)}
\big(\mu[v_0],-I[v_0]-1\big)\Big]
\\
\cdot \prod_{v\in \VV_{\mathrm{out}}(\Gamma)}
\Big[\overline{\H}_{\g(v)}\big(\mu[v],I[v]-1\big)\Big]\Bigg] \, .
\end{multline*}

The right side of the definition of $\mathsf{H}_{g,\mu}$ may be viewed
as a sum over all irreducible components of
$$Z\subset\widetilde{\H}_{g}(\mu)\, .$$
If the curves of $Z$ generically do not have a
separating node,
$$Z\subset \overline{\H}_g^{\mathrm{Irr}}(\mu)
\subset
\overline{\mathcal{M}}_{g,n}\, ,$$
see Section \ref{irrr}.
Since there is an equality of closures $\overline{\H}_g(\mu)=
\overline{\H}_g^{\mathrm{Irr}}(\mu)$,
we obtain
$$Z\subset \overline{\H}_g(\mu) \subset
\overline{\mathcal{M}}_{g,n}\, .$$
Hence,
$Z$ contributes to the term corresponding
to the trivial star graph $$\Gamma=\{v_0\}$$
 of genus $g$ carrying
all the parts of $\mu$.

The trivial star graph $\Gamma$ has
no edges, nothing to twist, and no automorphisms.
Since $\xi_{\Gamma}$ is the identity map here, the
term corresponding to $\Gamma$ is
$$
\Big[\overline{\H}_{\g(v_0)}
\big(\mu)\Big]
\in A^{\g(v_0)}(\overline{\mathcal{M}}_{\g(v_0),\n(v_0)})\, .$$

If $Z$ has a separating node, then, by Section \ref{stt} of the paper,
$Z$ contributes to the term corresponding to the associated
simple star graph.
 Every outlying vertex contributes the class
$$\Big[\overline{\H}_{\g(v_j)}\big(\mu[v_j],I[v_j]-1\big)\Big] \in
A^{\g(v_j)-1}(\overline{\mathcal{M}}_{\g(v_j),\n(v_j)})$$
corresponding to a canonical divisor in the holomorphic
case (since the parts of $\mu[v_j]$ and $I[v_j]-1$ are non-negative).

The formula for $\mathsf{H}_{g,\mu}$ differs from the usual
fundamental class of $\widetilde{\H}_g(\mu)$ by the weighting
factor ${\prod_{e\in \mathsf{E}(\Gamma)} I(e)}$ which is
motivated by relative Gromov-Witten theory.

\vspace{10pt}
\noindent{\bf{Conjecture A.}} {\em In the strictly meromorphic case,
$\mathsf{H}_{g,\mu} = 2^{-g}\, \mathsf{P}_{g,\mu}^g $ .}
\vspace{10pt}

Our conjecture provides a completely geometric representative
of Pixton's cycle in terms of twisted canonical divisors.
The geometric situation here is parallel, but much better behaved,
than the corresponding result of \cite{jppz} proving
Pixton's conjecture for the double ramification cycle (as
the latter carries virtual contributions from contracted
components of excess dimension).

Finally, we speculate that the study of volumes of the
moduli spaces of meromorphic differentials may have a much
simpler answer if summed over all the components
of $\widetilde{\H}_g(\mu)$. How to properly define such volumes
here is a question left for the future.

\vspace{10pt}
\noindent{A.5\, \bf{Examples.}}
\vspace{8pt}

\noindent{A.5.1\, \bf{Genus at most 1.}}
Conjecture A is trivial in case $g=0$ since both sides are the
identity in $R^0(\overline{\mathcal{M}}_{0,n})$.
In case $g=1$, Conjecture A is nontrivial and true.
The class $\mathsf{H}_{1,\mu}$
has contributions from boundary components.
Our of proof Conjecture A in case $g=1$ is indirect. We prove
\begin{enumerate}
\item[$\bullet$] $\mathsf{H}_{1,\mu}$
equals the double ramification cycle in genus 1 associated to $\mu$
 via geometric arguments.
\item[$\bullet$] $2^{-1}\PPP^1_{1,\mu}$ equals
Pixton's prediction for the double ramification cycle via
direct analysis of the formulas.
\end{enumerate}
Then, by the main result of \cite{jppz}, Conjecture A
holds in $g=1$.
An alternate approach via direct calculation
is also available in $g=1$.

\vspace{8pt}
\noindent{A.5.2\, \bf{Genus 2.}}
A more interesting example is  $g=2$ with $\mu=(3,-1)$.
We first enumerate all simple star
graphs (together with their possible twists) which contribute to
the cycle $\mathsf{H}_{2,(3,-1)}$:

\begin{enumerate}
\item[(i)] $\Gamma=\{v_0\}$,\, $\g(v_0)=2$,\, $|\text{Aut}(\Gamma)|=1$.

\begin{centering}{\hspace{50pt} \includegraphics[scale=0.5]{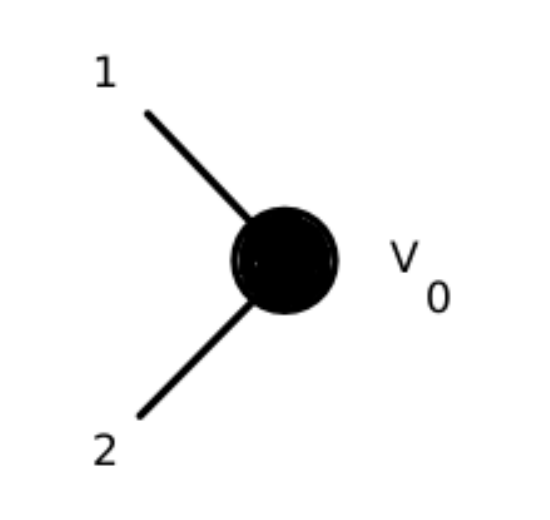}}
\end{centering}

\item [(ii)] $\Gamma=\{v_0,v_1\}$,\, $\g(v_0)=1$,\, $\g(v_1)=1$,\, $|\text{Aut}(\Gamma)|=1$.

\begin{centering}{\hspace{50pt} \includegraphics[scale=0.5]{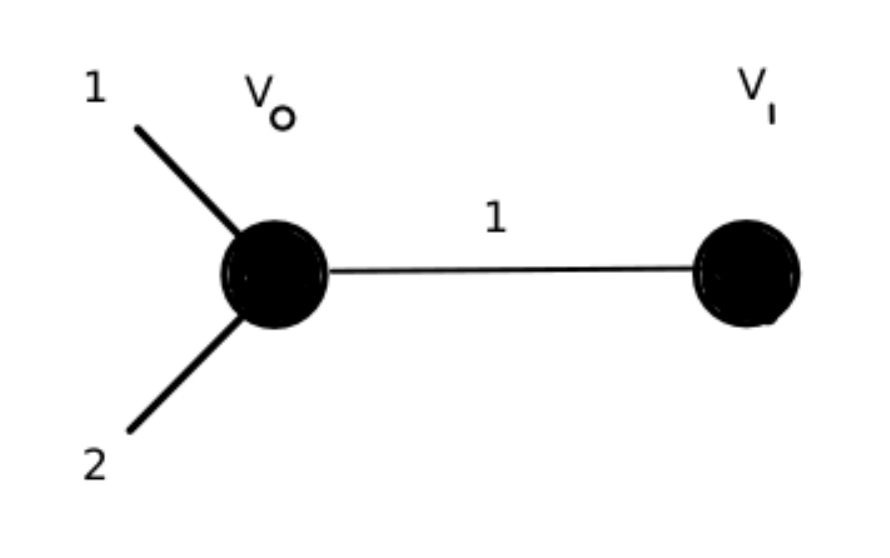}}
\end{centering}

\item[ (iii)]  $\Gamma=\{v_0,v_1\}$,\, $\g(v_0)=0$,\, $\g(v_1)=2$,\, $|\text{Aut}(\Gamma)|=1$.

\begin{centering}{\hspace{50pt} \includegraphics[scale=0.5]{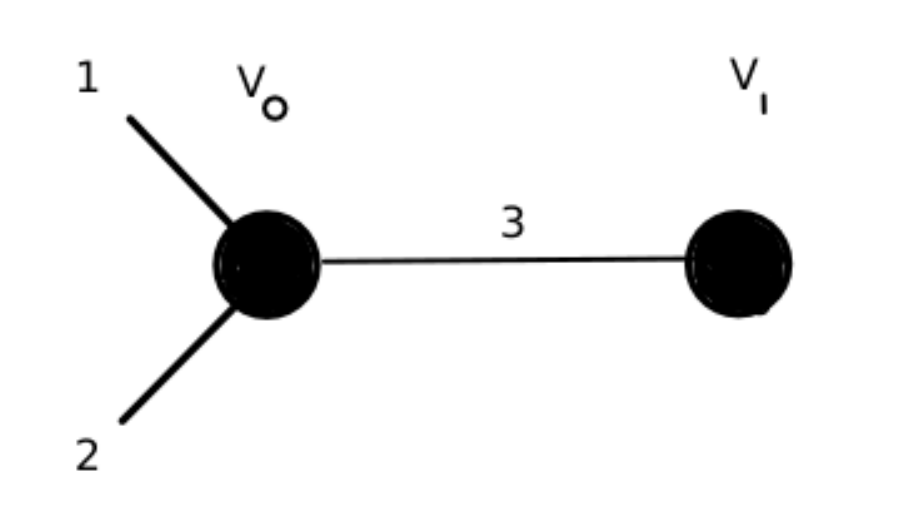}}
\end{centering}

\item[ (iv)]  $\Gamma=\{v_0,v_1,v_2\}$,\, $\g(v_0)=0$,\, $\g(v_1)=1$,\, $\g(v_2)=1$,\, $|\text{Aut}(\Gamma)|=2$.

\begin{centering}{\hspace{40pt} \includegraphics[scale=0.5]{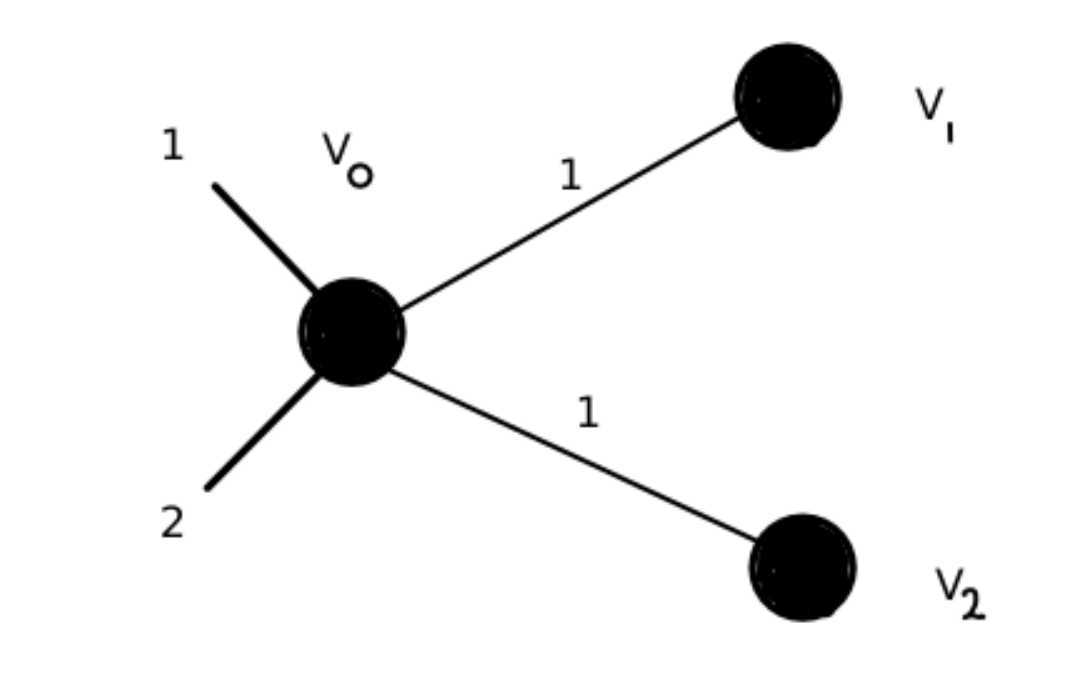}}
\end{centering}

\item[ (v)]  $\Gamma=\{v_0,v_1\}$,\, $\g(v_0)=0$,\, $\g(v_1)=1$, \, $|\text{Aut}(\Gamma)|=2$.

\vspace{4pt}
\begin{centering}{\hspace{50pt} \includegraphics[scale=0.5]{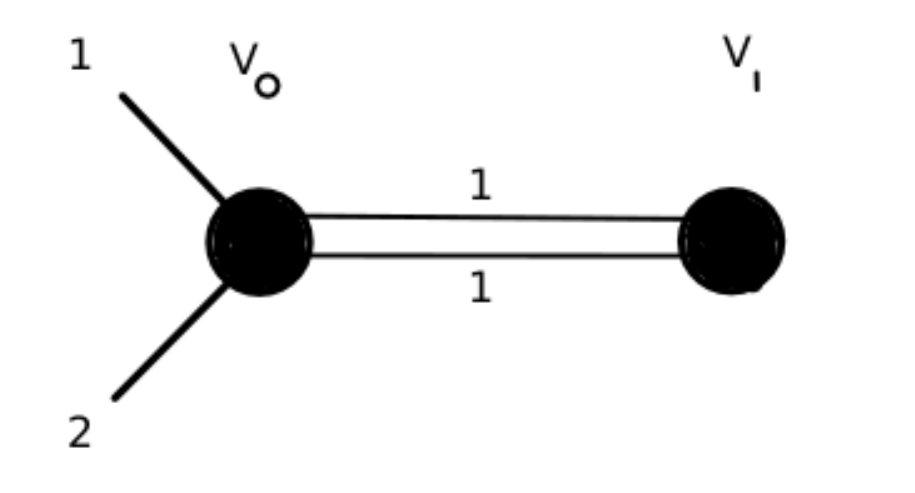}}
\end{centering}

\end{enumerate}
In the diagrams, the legs are assigned a marking, and the edges are assigned a twist.
In all the cases $\Gamma$ above, the twist $I\in \text{Tw}(\Gamma)$ is unique.

The second step is to calculate, for each star graph $\Gamma$ and associated twist $I\in \text{Tw}(\Gamma)$,
the contribution
\begin{equation*}
\frac {\prod_{e\in \mathsf{E}(\Gamma)} I(e)}{|{\text{Aut}}(\Gamma)| }
\;
\xi_{\Gamma*}\Bigg[
\Big[\overline{\H}_{\g(v_0)}
\big(\mu[v_0],-I[v_0]-1\big)\Big]
\cdot \prod_{v\in \VV_{\mathrm{out}}(\Gamma)}
\Big[\overline{\H}_{\g(v)}\big(\mu[v],I[v]-1\big)\Big]\Bigg] \, .
\end{equation*}
The contributions in cases (i)-(v) above are:
\begin{enumerate}
\item[(i)] The moduli space $\overline{\H}_{2}\big(3,-1\big)$ is
empty since there are no meromorophic differentials with a simple pole. The contribution is $0$.

\vspace{5pt}
\item [(ii)] We must calculate here $\overline{\H}_{1}\big(3,-1,-2\big)$
which is easily obtained from the method of test curves (or by applying Conjecture A in
the proven genus $1$ case). The  contribution is

\begin{center} \begin{picture}(0,0)%
\includegraphics{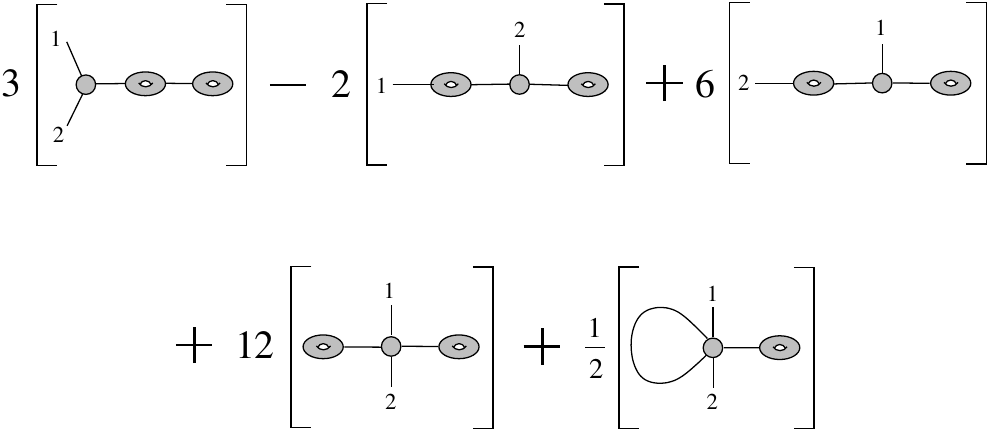}%
\end{picture}%
%
%
\setlength{\unitlength}{1618sp}%
\begingroup\makeatletter\ifx\SetFigFont\undefined%
\gdef\SetFigFont#1#2#3#4#5{%
  \reset@font\fontsize{#1}{#2pt}%
  \fontfamily{#3}\fontseries{#4}\fontshape{#5}%
  \selectfont}%
\fi\endgroup%
\begin{picture}(11572,5034)(3977,-5815)
\end{picture}%
.
\end{center}

\vspace{5pt}
\item[(iii)] We require here the well-known formula for
the Weierstrass locus,
$$\overline{\H}_{2}\big(2\big)\subset \overline{\mathcal{M}}_{2,1}\, ,$$  studied
by Eisenbud and Harris \cite{EH}. Lemma 5 of \cite{BP} follows our notation here.
The contribution, including the
twist $3$, is

\begin{center} \begin{picture}(0,0)%
\includegraphics{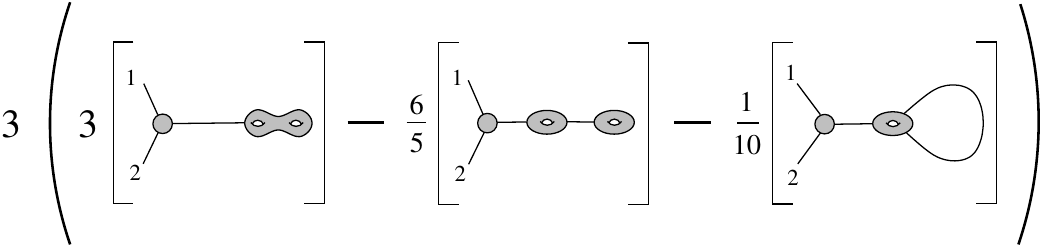}%
\end{picture}%
%
%
\setlength{\unitlength}{1618sp}%
\begingroup\makeatletter\ifx\SetFigFont\undefined%
\gdef\SetFigFont#1#2#3#4#5{%
  \reset@font\fontsize{#1}{#2pt}%
  \fontfamily{#3}\fontseries{#4}\fontshape{#5}%
  \selectfont}%
\fi\endgroup%
\begin{picture}(12184,2881)(1306,-3200)
\put(3885,-2037){\makebox(0,0)[lb]{\smash{{\SetFigFont{8}{9.6}{\rmdefault}{\mddefault}{\updefault}{\color[rgb]{0,0,0}$\psi$}%
}}}}
\end{picture}%
 .
\end{center}

\vspace{5pt}
\item[(iv)] The locus is already codimension $2$, so the contribution (including the
automorphism factor) is

\begin{center} \begin{picture}(0,0)%
\includegraphics{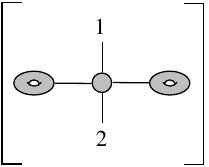}%
\end{picture}%
%
%
\setlength{\unitlength}{1618sp}%
\begingroup\makeatletter\ifx\SetFigFont\undefined%
\gdef\SetFigFont#1#2#3#4#5{%
  \reset@font\fontsize{#1}{#2pt}%
  \fontfamily{#3}\fontseries{#4}\fontshape{#5}%
  \selectfont}%
\fi\endgroup%
\begin{picture}(2407,1940)(1436,-5814)
\end{picture}%
 .
\end{center}

\vspace{5pt}
\item[(v)] The locus is again codimension $2$, so the contribution (including the
automorphism factor) is

\begin{center} \begin{picture}(0,0)%
\includegraphics{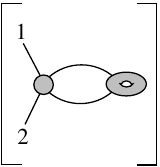}%
\end{picture}%
%
%
\setlength{\unitlength}{1618sp}%
\begingroup\makeatletter\ifx\SetFigFont\undefined%
\gdef\SetFigFont#1#2#3#4#5{%
  \reset@font\fontsize{#1}{#2pt}%
  \fontfamily{#3}\fontseries{#4}\fontshape{#5}%
  \selectfont}%
\fi\endgroup%
\begin{picture}(1858,1938)(9052,-5801)
\end{picture}%
 .
\end{center}

\end{enumerate}

After summing over the cases (i)-(v), we obtain a formula
in $R^2(\overline{\mathcal{M}}_{2,2})$
for the weighted fundamental class $\mathsf{H}_{2,(3,-1)}$
of the moduli space of twisted canonical divisors:
\vspace{5pt}

\begin{center} \begin{picture}(0,0)%
\includegraphics{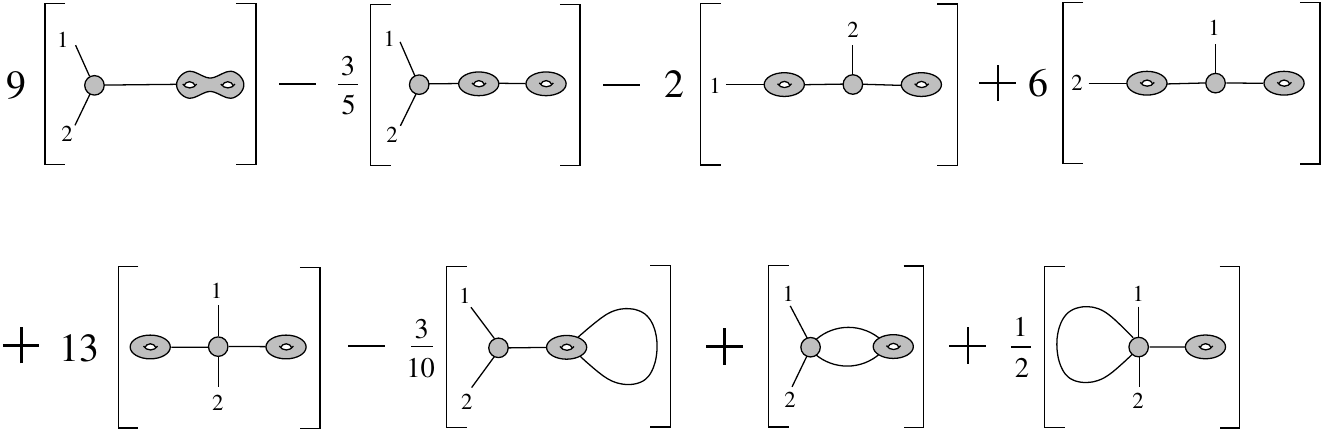}%
\end{picture}%
%
%
\setlength{\unitlength}{1618sp}%
\begingroup\makeatletter\ifx\SetFigFont\undefined%
\gdef\SetFigFont#1#2#3#4#5{%
  \reset@font\fontsize{#1}{#2pt}%
  \fontfamily{#3}\fontseries{#4}\fontshape{#5}%
  \selectfont}%
\fi\endgroup%
\begin{picture}(15474,5033)(75,-5814)
\put(1855,-2046){\makebox(0,0)[lb]{\smash{{\SetFigFont{8}{9.6}{\rmdefault}{\mddefault}{\updefault}{\color[rgb]{0,0,0}$\psi$}%
}}}}
\end{picture}%
 .
\end{center}


\vspace{8pt}
\noindent The result exactly equals $2^{-2}\, \PPP_{2,(3,-1)}^2 \in
R^2(\overline{\mathcal{M}}_{2,2})$.

The match is obtained
by expanding $2^{-2}\, \PPP_{2,(3,-1)}^2$ via the definition in Section A.3.2
and then applying relations in the tautological ring.
Conjecture A is therefore true for $g=2$ with $\mu=(3,-1)$.

We have also checked Conjecture A for $g=2$ with $\mu=(2,1,-1)$.
The calculation of $\mathsf{H}_{2,(2,1,-1)}$ involves 7 simple star
graphs,  Lemmas 5 and 6 of \cite{BP}, and 23 strata classes of
$R^2(\overline{\mathcal{M}}_{2,3})$. The matching with
$$2^{-2}\, \PPP_{2,(2,1,-1)}^2 \in R^2(\overline{\mathcal{M}}_{2,3})$$
as predicted by Conjecture A exactly holds.

\newpage
\noindent{A.6\, \bf{The cycle $\overline{\H}_g(\mu)$}}
\vspace{8pt}

\noindent{A.6.1\, \bf{Meromorphic differentials.}}
In the strictly meromorphic case, where
$$\mu=(m_1,\ldots,m_n)$$ has at least one negative part, Conjecture A provides
a formula in the tautological ring for the weighted fundamental class
$$\mathsf{H}_{g,\mu} \in A^g(\oM_{g,n})\, .$$
Can Conjecture A also be used to determine the class
$$[\overline{\H}_g(\mu)] \in A^g(\oM_{g,n})$$
of
closure of the moduli of canonical divisors
$\H_{g}(\mu)$ on nonsingular curves ?
We will prove that the answer is {\em yes}.

We have seen $[\overline{\H}_g(\mu)]$ appears exactly
as the
contribution to  $\mathsf{H}_{g,\mu}$ of the trivial simple star graph.
Rewriting the definition of $\mathsf{H}_{g,\mu}$ yields the
formula:

\begin{multline} \label{pppzzz}
[\overline{\H}_{g}(\mu)] = \mathsf{H}_{g,\mu} -
\sum_{\Gamma\in \mathsf{S}^\star_{g,\mu}}
\sum_{I\in {\mathrm{Tw}}(\Gamma)}
\frac {\prod_{e\in \mathsf{E}(\Gamma)} I(e)}{|{\text{Aut}}(\Gamma)| }
\;
\xi_{\Gamma*}\Bigg[
\Big[\overline{\H}_{\g(v_0)}
\big(\mu[v_0],-I[v_0]-1\big)\Big]
\\
\cdot \prod_{v\in \VV_{\mathrm{out}}(\Gamma)}
\Big[\overline{\H}_{\g(v)}\big(\mu[v],I[v]-1\big)\Big]\Bigg] \,
\end{multline}
where $\mathsf{S}^\star_{g,\mu}$ is the set of {\em nontrivial}
simple star graphs.

A nontrivial simple star graph ${\Gamma\in \mathsf{S}^\star_{g,\mu}}$
must have at least one outlying vertex.
An outlying vertex ${v\in \VV_{\mathrm{out}}(\Gamma)}$
contributes the factor
$$\Big[\overline{\H}_{\g(v)}\big(\mu[v],I[v]-1\big)\Big]$$
which concerns the {\em holomorphic} case. Hence if $\g(v)=0$,
the contribution of $\Gamma$ vanishes (as there are no
holomorphic differential in genus $0$).

To state the first induction result, we require the standard partial order
on the pairs $(g,n)$. We define
\begin{equation}\label{kk12}
(\widehat{g},\widehat{n}) \stackrel{\circ}{<} (g,n)
\end{equation}
if $\widehat{g}<g$ holds or if
$$\widehat{g}=g \ \ \ \text{and}\ \ \  \widehat{n} <n\, $$
both hold.

\vspace{8pt}
\noindent{\bf Lemma A1.}
{\em Let $\mu=(m_1,\ldots,m_n)$ be strictly meromorphic.
The class
$$[\overline{\H}_g(\mu)] \in A^g(\oM_{g,n})$$ is
determined via formula \eqref{pppzzz} by classes of the following three types:
\begin{enumerate}
\item[(i)] $\mathsf{H}_{g,\mu}\in A^g(\oM_{g,n})$,
\item[(ii)] $[\overline{\H}_{g'}(\mu')]\in A^{g'}(\oM_{g',n'})$ for
$(g',n') \stackrel{\circ}{<}(g,n)$ and $\mu'=(m'_1,\ldots,m'_{n'})$ strictly meromorphic,
\item[(iii)] $[\overline{\H}_{g''}(\mu'')]\in A^{g''-1}(\oM_{g'',n''})$
for $(g'',n'')\stackrel{\circ}{<} (g,n)$ and $\mu''=(m''_1,\ldots,m''_{n''})$
holomorphic.
\end{enumerate}}

\vspace{8pt}
\begin{proof}
The class (i) appears as the leading term on the right side of
\eqref{pppzzz}. Nontrivial simple star graphs $\Gamma$ contribute
classes of type (ii) via the center vertex $v_0$ (with $$g'=\g(v_0)<g$$
or else the
contribution of $\Gamma$ vanishes). The outlying vertices $v$ of
$\Gamma$ contribute classes of type (iii).
If $$g''=\g(v)=g\, ,$$ then
$\g(v_0)=0$ and there are no outlying vertices other than $v$
(or else the contribution of $\Gamma$ vanishes)
and only a single edge. By stability,
at
least two parts of $\mu$ must be distributed to $v_0$, so $n''<n$.
\end{proof}

 Lemma A.1 alone does {\em not} let us recursively
calculate $[\overline{\H}_g(\mu)]$ in terms of
$\mathsf{H}_{g,\mu}$ because of occurances of the holomorphic
cases (iii).

\vspace{8pt}
\noindent{A.6.2\, \bf{Holomorphic differentials.}}
Consider the holomorphic case where
$$\mu=(m_1,\ldots,m_n)$$
has only non-negative parts. Since
$$\sum_{i=1}^n m_i = 2g-2\, $$
we must have $g\geq 1$. If $g=1$, then all parts of $\mu$ must be
$0$ and $$[\overline{\H}_1(0,\ldots,0)] = 1 \in
A^0(\overline{\mathcal{M}}_{1,n})\,. $$
We assume $g\geq 2$.

Let $\mu'=(m'_1,\ldots,m'_{n'})$ be obtained from $\mu$ by removing the parts equal to $0$.
Then
$[\overline{\H}_g(\mu)]$
is obtained by pull-back{\footnote{The pull-back is true on
the level of subvarieties,
$\overline{\H}_g(\mu)=\tau^{-1}\Big(\overline{\H}_g(\mu')\Big)
\subset \overline{\mathcal{M}}_{g,n}$ .
}} via the map
$$\tau: \overline{\mathcal{M}}_{g,n}\rightarrow \overline{\mathcal{M}}_{g,n'}$$
forgetting the markings
associated to the
$0$ parts of $\mu$,
\begin{equation}\label{j123}
[\overline{\H}_g(\mu)]=\tau^*[\overline{\H}_g(\mu')]
\in A^{g-1}(\overline{\mathcal{M}}_{g,n})\, .
\end{equation}

We assume $\mu$ has no $0$ parts, so all parts $m_i$ are positive.
We place the parts of $\mu$ in increasing order
$$m_1\leq m_2 \leq \cdots \leq m_{n-1} \leq m_n\, , $$
so $m_n$ is the largest part.
Let $\mu^+$ be the partition defined by
$$\mu^+=(m_1,\ldots,m_{n-1},m_n+1,-1)\, .$$
We have increased the largest part of $\mu$, added
a negative part, and preserved the sum
$$|\mu| = |\mu^+|\, .$$
For notational convenience, we will  write
$$\mu^+ = (m_1^+, \ldots, m_n^+, -1)\,, \ \ \ m^+_{i<n}=m_i\, , \ \ \ m^+_n=m_n+1\, .$$
The length of $\mu^+$ is $n+1$.

Since $\mu^+$ is strictly meromorphic, we are permitted to
apply formula \eqref{pppzzz}. We obtain
\begin{multline*}
[\overline{\H}_{g}(\mu^+)] = \mathsf{H}_{g,\mu^+} -
\sum_{\Gamma\in \mathsf{S}^\star_{g,\mu^+}}
\sum_{I\in {\mathrm{Tw}}(\Gamma)}
\frac {\prod_{e\in \mathsf{E}(\Gamma)} I(e)}{|{\text{Aut}}(\Gamma)| }
\;
\xi_{\Gamma*}\Bigg[
\Big[\overline{\H}_{\g(v_0)}
\big(\mu^+[v_0],-I[v_0]-1\big)\Big]
\\
\cdot \prod_{v\in \VV_{\mathrm{out}}(\Gamma)}
\Big[\overline{\H}_{\g(v)}\big(\mu^+[v],I[v]-1\big)\Big]\Bigg] \,
\end{multline*}
where $\mathsf{S}^\star_{g,\mu^+}$ is the set of nontrivial
simple star graphs.

Since a meromorphic differential on a nonsigular curve
{\em can not} have just one simple pole, ${\H}_{g}(\mu^+)$
is empty and $[\overline{\H}_{g}(\mu^+)]=0$.
We rewrite the above equation as
\begin{multline} \label{tttzzz}
\mathsf{H}_{g,\mu^+} =
\sum_{\Gamma\in \mathsf{S}^\star_{g,\mu^+}}
\sum_{I\in {\mathrm{Tw}}(\Gamma)}
\frac {\prod_{e\in \mathsf{E}(\Gamma)} I(e)}{|{\text{Aut}}(\Gamma)| }
\;
\xi_{\Gamma*}\Bigg[
\Big[\overline{\H}_{\g(v_0)}
\big(\mu^+[v_0],-I[v_0]-1\big)\Big]
\\
\cdot \prod_{v\in \VV_{\mathrm{out}}(\Gamma)}
\Big[\overline{\H}_{\g(v)}\big(\mu^+[v],I[v]-1\big)\Big]\Bigg] \, .
\end{multline}

A nonvanishing term on right side of \eqref{tttzzz} corresponding to
${\Gamma\in \mathsf{S}^\star_{g,\mu^+}}$
has a center vertex factor
$$ \Big[\overline{\H}_{\g(v_0)}
\big(\mu^+[v_0],-I[v_0]-1\big)\Big]\ \ \ \ \text{satisfying}\ \ \ \ \g(v_0)<g $$
and outlying vertex factors
\begin{equation}\label{qqq}
\Big[\overline{\H}_{\g(v)}\big(\mu^+[v],I[v]-1\big)\Big]
\end{equation}
satisfying {either} $\g(v)<g$ {or} $\g(v)=g$.
If $\g(v)=g$, then the entire genus of $\Gamma$ is concentrated on $v$
and there can be no other outlying vertices (or else the
contribution of $\Gamma$ vanishes)
and only a single edge. By stability, at least two parts
of $\mu^+$ must be distributed to $v_0$, so $\n(v)\leq n$.
Hence, the outlying vertex factors \eqref{qqq} satisfy
{either} $\g(v)<g$ or
\begin{equation} \label{hhyy}
\g(v)=g \ \ \ \text{and} \ \ \ \n(v)\leq n\, .
\end{equation}

We study now all the contributions of
graphs ${\Gamma\in \mathsf{S}^\star_{g,\mu^+}}$
which carry an outlying vertex ${v\in \VV_{\mathrm{out}}(\Gamma)}$
satisfying
\begin{equation*}
\g(v)=g \ \ \ \text{and} \ \ \ \n(v)= n\, ,
\end{equation*}
 the extremal case of \eqref{hhyy}.
We have seen
\begin{equation} \label{w98}
 \text{\em $\Gamma=\{v_0,v_1\}$ with a single edge $e$ and
$\g(v_0)=0$, $\g(v_1)=g$}
\end{equation}
is the only possibility for a nonvanishing contribution.
By definition, the negative part of $\mu^+$ must be
distributed to $v_0$. In order for $\n(v_1)=n$, exactly
one positive part $m^+_i$ of $\mu^+$ must also be distributed to
$v_0$. Let $\Gamma_i \in \mathsf{S}^\star_{g,\mu^+}$ be the simple
star graph of type \eqref{w98} with
marking distribution
\begin{equation}\label{w99}
\{m^+_i,-1\} \mapsto v_0\, , \ \ \ \
\{m^+_1,\ldots,\widehat{m^+_i},\ldots, m^+_n\} \mapsto v_1\, .
\end{equation}
The
graphs $\Gamma_1,\, \Gamma_2,\, \ldots,\, \Gamma_n$
are the only elements of $\mathsf{S}^\star_{g,\mu^+}$
which  saturate the bounds \eqref{hhyy} and have possibly
nonvanishing contributions to the right side of \eqref{tttzzz}.

The negative part of $\mu^+$ corresponds to the last marking
of the associated moduli space $\overline{\mathcal{M}}_{g,n+1}$. Let
$$\epsilon: \overline{\mathcal{M}}_{g,n+1} \rightarrow
\overline{\mathcal{M}}_{g,n}$$
be the map forgetting the last marking.
We push-forward formula \eqref{tttzzz} under $\epsilon$,
\begin{multline} \label{tttzzze}
\epsilon_*\mathsf{H}_{g,\mu^+} =
\sum_{\Gamma\in \mathsf{S}^\star_{g,\mu^+}}
\sum_{I\in {\mathrm{Tw}}(\Gamma)}
\frac {\prod_{e\in \mathsf{E}(\Gamma)} I(e)}{|{\text{Aut}}(\Gamma)| }
\;
\epsilon_*\xi_{\Gamma*}\Bigg[
\Big[\overline{\H}_{\g(v_0)}
\big(\mu^+[v_0],-I[v_0]-1\big)\Big]
\\
\cdot \prod_{v\in \VV_{\mathrm{out}}(\Gamma)}
\Big[\overline{\H}_{\g(v)}\big(\mu^+[v],I[v]-1\big)\Big]\Bigg] \,
\end{multline}
to obtain an equation in $A^{g-1}(\overline{\mathcal{M}}_{g,n})$.

We study the precise contribution
to the right side of \eqref{tttzzze} of the
graphs $\Gamma_i$
characterized by \eqref{w98} and \eqref{w99}.
The graph $\Gamma_i$ has a unique possible twist{\footnote{We use the condition
$m_i^+>0$ here.}}
$$I(e)=m^+_i\ .$$
The contribution of $\Gamma_i$ is
\begin{equation*}
m^+_i\cdot \epsilon_*\xi_{\Gamma_i*}\Bigg[
\Big[\overline{\H}_{0}
\big(m^+_i,-1,-m^+_i-1\big)\Big]  \cdot
\Big[\overline{\H}_{g}\big(m^+_1,\ldots,m^+_{i-1},m^+_i-1,m^+_{i+1}, \ldots m^+_n)\Big]\Bigg]
\end{equation*}
where we have
\begin{equation} \label{jjed}
\epsilon \circ \xi_{\Gamma_i}:
\overline{\mathcal{M}}_{0,3}
\times \overline{\mathcal{M}}_{g,n} \stackrel{\sim}{\longrightarrow}
\overline{\mathcal{M}}_{g,n} \, .
\end{equation}
Using the isomorphism \eqref{jjed}, the contribution of $\Gamma_i$ is
simply
$$\text{Cont}(\Gamma_i) = m_i^+\cdot \Big[\overline{\H}_{g}\big(m^+_1,
\ldots,m^+_{i-1},m^+_i-1,m^+_{i+1}, \ldots m^+_n)\Big]
\in A^{g-1}(\overline{\mathcal{M}}_{g,n})\ .
$$

The contribution of $\Gamma_n$ to the right side of \eqref{tttzzze}
is special. Since by construction $m^+_n-1=m_n$, we see
$$\text{Cont}(\Gamma_n) = (m_n+1)\cdot \Big[\overline{\H}_{g}\big(m_1,
\ldots, m_n)\Big] = (m_n+1) \cdot [\overline{\H}_{g}\big(\mu)]
\in A^{g-1}(\overline{\mathcal{M}}_{g,n})\,
$$ with $m_n+1\neq 0$.
The contributions of the graphs $\Gamma_1, \ldots, \Gamma_{n-1}$
are proportional to  classes
$$[\overline{\H}_{g}\big(\mu')]\in
A^{g-1}(\overline{\mathcal{M}}_{g,n})
$$
for non-negative partitions $\mu'$ with largest
part {\em larger} than the largest part of $\mu$.

\vspace{8pt}
\noindent{\bf Lemma A2.}
{\em Let $\mu=(m_1,\ldots,m_n)$ be holomorphic with no $0$ parts.
The class
$$[\overline{\H}_g(\mu)] \in A^{g-1}(\oM_{g,n})$$ is
determined via formula \eqref{tttzzze}
by classes of the following four types:
\begin{enumerate}
\item[(i)] $\mathsf{H}_{g,\mu^+}\in A^g(\oM_{g,n})$,
\item[(ii)] $[\overline{\H}_{g'}(\mu')]\in A^{g'}(\oM_{g',n'})$ for
$(g',n') \stackrel{\circ}{<}(g,n)$ and $\mu'=(m'_1,\ldots,m'_{n'})$ strictly meromorphic,
\item[(iii)] $[\overline{\H}_{g''}(\mu'')]\in A^{g''-1}(\oM_{g'',n''})$
for $(g'',n'')\stackrel{\circ}{<} (g,n)$ and $\mu''=(m''_1,\ldots,m''_{n''})$
holomorphic.
\item[(iv)] $[\overline{\H}_{g}(\mu'')]\in A^{g-1}(\oM_{g,n})$
for $\mu''=(m''_1,\ldots,m''_{n})$
holomorphic with the largest part of $\mu''$ {\em larger}
than the largest part of $\mu$.
\end{enumerate}}

\vspace{8pt}
\begin{proof}
The $\epsilon$ push-forward of the class (i) appears on the left of
\eqref{tttzzze}. Nontrivial simple star graphs
not equal to $\Gamma_1,\ldots,\Gamma_n$
contribute
(ii) via the center vertex and (iii) via the outlying vertices.
The graphs $\Gamma_1,\ldots, \Gamma_{n-1}$ contribute (iv).
We then solve \eqref{tttzzze}
 for the contribution
$(m_n+1) \cdot [\overline{\H}_{g}\big(\mu)]$
of  $\Gamma_n$ .
\end{proof}

\vspace{8pt}

\noindent{A.6.3\, \bf{Determination.}}
We now combine Lemmas A1 and A2 to obtain the following
basic result.

\vspace{8pt}
\noindent{\bf Theorem A3.}
{\em Conjecture A effectively
determines the classes
$$[\overline{\H}_{g}(\mu)] \in A^*(\overline{\mathcal{M}}_{g,n})$$
both in the holomorphic and the strictly meromorphic cases.}

\vspace{8pt}
\begin{proof}
If $g=0$, the class $[\overline{\H}_{g}(\mu)]$ vanishes
in the holomorphic case and is the indentity $1\in A^*(\overline{\mathcal{M}}_{0,n})$ in the strictly meromorphic case.
We proceed by induction with respect to the partial ordering
$\stackrel{\circ}{<}$ on pairs defined by \eqref{kk12}.

If $\mu$ is strictly meromorphic, we apply Lemma A1.
Conjecture A determines the class
$$\mathsf{H}_{g,\mu} \in
A^{g}(\overline{\mathcal{M}}_{g,n})\, .$$
The rest of the classes specified by Lemma A1
are {\em strictly lower} in the partial ordering
$\stackrel{\circ}{<}$ .


If $\mu$ is holomorphic and $\mu$ has parts equal to $0$,
then either
$$g=1\, , \ \ \mu=(0,\ldots,0)\, ,  \ \ \text{and}\ \   [\overline{\H}_{g}(0,\ldots,0)]=1 \in A^0(\overline{\mathcal{M}}_{1,n})$$
or $\mu'$ (obtained by removing the $0$ parts) yields a
class
$$[\overline{\H}_{g}(\mu')] \in
A^{g-1}(\overline{\mathcal{M}}_{g,n'})\, $$
which is {\em strictly lower} in the partial ordering
$\stackrel{\circ}{<}$ . We then
apply the pull-back \eqref{j123} to determine
$[\overline{\H}_{g}(\mu)]\in \overline{\mathcal{M}}_{g,n}$.

We may therefore assume $\mu$ is holomorphic with no $0$ parts.
We then apply Lemma A2.
Conjecture A determines the class
$$\epsilon_*\mathsf{H}_{g,\mu^+} \in
A^{g-1}(\overline{\mathcal{M}}_{g,n})\, .$$
The rest of the classes specified by Lemma A2
either are
{\em strictly lower}
in the partial ordering $\stackrel{\circ}{<}$ or
are
\begin{enumerate}
\item[(iv)] $[\overline{\H}_{g}(\mu'')]\in A^{g-1}(\oM_{g,n})$
for $\mu''=(m''_1,\ldots,m''_{n})$
holomorphic with the largest part of $\mu''$ {\em larger}
than the largest part of $\mu$.
\end{enumerate}

To handle (iv),
we
apply  descending induction on the largest part of $\mu$ in the holomorphic
case.
The base  for the descending induction occurs
when the largest part is $2g-2$, then there are {\em no partitions
with larger largest part}.
\end{proof}

\vspace{8pt} We have presented a calculation of
the classes of the closures
$$\overline{\H}_g(\mu) \subset \overline{\mathcal{M}}_{g,n}$$
of the
moduli spaces of canonical divisors on nonsingular curves via Conjecture A
{\em and} the virtual components of the moduli space of twisted canonical divisors. Theorem A3 yields an effective method with result
$$[\overline{\H}_g(\mu)] \in R^*(\overline{\mathcal{M}}_{g,n})\, .$$

The $g=2$ cases with $\mu=(3,-1)$ and $\mu=(2,1,-1)$
 discussed in Section A.5  may also be viewed
as steps in the recursion of Theorem A3 to calculate
\begin{equation}\label{dxx23}
\overline{\H}_{2,(2)} \in R^1(\overline{\mathcal{M}}_{2,1}) \ \ \ \text{and}
\ \ \  \overline{\H}_{2,(1,1)}\in R^1(\overline{\mathcal{M}}_{2,2})
\end{equation}
respectively. Since we already have formulas for the classes \eqref{dxx23},
the calculations serve to check Conjecture A.


\end{document}